\numberwithin{equation}{section}
\newtheorem{lemma}{Lemma}
\numberwithin{lemma}{section}
\newtheorem{corollary}[lemma]{Corollary}
\newtheorem*{thm*}{Theorem}
\newtheorem{proposition}[lemma]{Proposition}
\newtheorem{open problem}[lemma]{Open problem}
\newtheorem*{fact*}{Fact}
\newtheorem{fact}[lemma]{Fact}
\newtheorem*{claim*}{Claim}
\newtheorem{thmA}{Theorem}
\newtheorem{corA}{Corollary}
\theoremstyle{definition}
\newtheorem{remark}[lemma]{Remark}
\newtheorem{final remark}[lemma]{Final remark}
\newtheorem{example}[lemma]{Example}
\newtheorem{question}[lemma]{Question}
\DeclareMathOperator{\acl}{acl}
\DeclareMathOperator{\scl}{scl}
\DeclareMathOperator{\ecl}{ecl}
\DeclareMathOperator{\cl}{c\ell}
\DeclareMathOperator{\dcl}{dcl}
\DeclareMathOperator{\dif}{d}
\DeclareMathOperator{\id}{id}
\DeclareMathOperator{\lex}{lex}
\DeclareMathOperator{\supp}{supp}
\DeclareMathOperator{\trdeg}{trdeg}
\DeclareMathOperator{\Mon}{Mon}
\DeclareMathOperator{\etrdeg}{etrdeg}
\DeclareMathOperator{\rk}{rk}
\newcommand{\N}{\mathbb{N}}
\newcommand{\F}{\mathbb{F}}
\newcommand{\Q}{\mathbb{Q}}
\newcommand{\R}{\mathbb{R}}
\newcommand{\CC}{\mathbb{C}}
\newcommand{\Z}{\mathbb{Z}}
\newcommand{\cC}{\mathcal C}
\newcommand{\cD}{\mathcal D}
\newcommand{\cG}{\mathcal G}
\newcommand{\cK}{\mathcal K}
\newcommand{\cM}{\mathcal M}
\newcommand{\cP}{\mathcal P}
\newcommand{\av}{{\bar a}}
\newcommand{\bv}{{\bar b}}
\newcommand{\dv}{{\bar d}}
\newcommand{\ev}{{\bar e}}
\newcommand{\mv}{{\bar m}}
\newcommand{\rv}{{\bar r}}
\newcommand{\sv}{{\bar s}}
\newcommand{\uv}{{\bar u}}
\newcommand{\w}{{\bar w}}
\newcommand{\x}{{\bar x}}
\newcommand{\y}{{\bar y}}
\newcommand{\Y}{{\bar Y}}
\newcommand{\eev}{\hat{e}}
\newcommand{\zero}{{\bar 0}}
\renewcommand{\preceq}{\preccurlyeq}
\renewcommand{\succeq}{\succcurlyeq}
\renewcommand{\epsilon}{\varepsilon}
\author{Antongiulio Fornasiero}
\author{Elliot Kaplan}
\email{antongiulio.fornasiero@gmail.com}
\email{ekaplan@mpim-bonn.mpg.de}
\title{Hilbert polynomials for finitary matroids}
\subjclass[2020]{Primary 05B35. Secondary 03C64, 05E40, 12H05, 12H10, 13D40}
\keywords{Finitary matroid, Hilbert polynomial, Kolchin polynomial, exponential field, o-minimal field, sumset}
\address{Dipartimento di Matematica e Informatica ``Ulisse Dini,'' Viale Morgagni, 67/a, 50134 Firenze, Italy}
\address{Max Planck Institute for Mathematics, Vivatsgasse 7, 53111 Bonn, Germany}
\date{\today}
\begin{document}
\maketitle
\begin{abstract}
We consider a tuple $\Phi = (\phi_1,\ldots,\phi_m)$ of commuting maps on a finitary matroid $X$. We show that if $\Phi$ satisfies certain conditions, then for any finite set $A\subseteq X$, the rank of $\{\phi_1^{r_1}\cdots\phi_m^{r_m}(a): a \in A\text{ and }r_1+\cdots+r_m = t\}$ is eventually a polynomial in $t$ (we also give a multivariate version of the polynomial). This allows us to easily recover Khovanskii's theorem on the growth of sumsets, the existence of the classical Hilbert polynomial, and the existence of the Kolchin polynomial. We also prove some new Kolchin polynomial results for differential exponential fields and derivations on o-minimal fields, as well as a new result on the growth of Betti numbers in simplicial complexes.
\end{abstract}
\setcounter{tocdepth}{1}
\tableofcontents

\section*{Introduction}\label{sec:intro}
\noindent
Eventual polynomial growth is a common theme in combinatorics and commutative
algebra. The quintessential example is the Hilbert function, which measures the
$K$-linear dimension of the graded pieces $M_t$ of a finitely generated
$K[x_1,\ldots,x_m]$-module $M = \bigoplus_tM_t$. 
This function is eventually equal to a polynomial in $t$, called the
\emph{Hilbert polynomial}. 
Another example is due to Kolchin, who showed that given a partial differential
field $(F,\delta_1,\ldots,\delta_m)$ of characteristic $0$ and a tuple $\av$ in a differential field extension of~$F$, the transcendence degree
\[
\trdeg_FF\big((\delta_1^{r_1}\cdots \delta_m^{r_m}\av)_{r_1+\cdots + r_m \leq t}\big)
\]
is eventually equal to a polynomial in $t$~\cite{Ko64}. This polynomial, called the \emph{Kolchin polynomial}, is a foundational object in differential algebra. In the area of additive combinatorics, Khovanskii showed that for finite subsets $A$ and $B$ of a commutative semigroup, the size of the sumset $A+tB$ is eventually polynomial in $t$~\cite{Kh92}.

\smallskip
In this paper, we show that the Hilbert polynomial, the Kolchin polynomial, and Khovanskii's polynomial admit a common generalization in terms of finitary matroids. A \emph{matroid} is a combinatorial structure that axiomatizes the notion of independence. While most matroids studied are finite, it is useful for our purposes to allow infinite matroids, while still requiring that any instance of dependence is witnessed by a finite set. There are many equivalent ways to define a finitary matroid, but for the purposes of this paper, we most frequently use closure operators (where the closure of a set consists of all elements which are not independent from the set) and ranks (where the rank of a finite set is the size of a maximal independent subset).

\smallskip
We consider how the rank of a finite set grows as one applies commuting
operators to the set. We show that, under certain assumptions on the operators,
this rank is eventually polynomial in the total number of times the operators
are applied. 
In the case of the Hilbert polynomial, the closure operator is the $K$-linear
span in~$M$, the rank is the $K$-linear dimension, and the operators are scalar
multiplication by the elements~$x_i$. For the Kolchin polynomial, the closure
operator is the algebraic closure over~$F$, the rank is the transcendence degree
over~$F$, and the operators are $\delta_1,\ldots,\delta_m$, and the identity map. 
For Khovanskii's polynomial, the underlying matroid is the semigroup with trivial closure, so the rank coincides with cardinality, and the operators are addition by elements of~$B$.

\smallskip
We are also able to apply our result in several other settings. We recover a result of Maclagan and Rinc\'{o}n on Hilbert polynomials for homogeneous tropical ideals in the semiring of tropical polynomials~\cite{MR18}, we prove that the Betti numbers of a finite subcomplex of a simplicial complex $\cK$ grow polynomially under an action of $\N^m$ on $\cK$ by simplicial endomorphisms, and we establish an analog of the Kolchin polynomial for difference-differential exponential fields and derivations on o-minimal fields. This last application was the original motivation behind this project, and we believe that our analog can serve the same role in the model theory of o-minimal fields with derivations that the classical Kolchin polynomial serves in the model theory of differential fields.

\smallskip
Khovanskii proved his result by constructing an appropriate graded module and using the existence of the Hilbert polynomial. Kolchin's result can also be proven using the Hilbert polynomial, as was shown by Johnson~\cite{Jo69}. Other known examples of eventual polynomial growth, such as Maclagan and Rinc\'{o}n's result on Hilbert polynomials for homogeneous tropical ideals~\cite{MR18}, are also often proved by reduction to the classical Hilbert polynomial. 
Nathanson and Ruzsa later gave a more elementary proof of Khovanskii's result~\cite{NR02} by reducing the problem to showing that the number of elements in an upward-closed subset of $\N^m$ of a given height $t$ is eventually polynomial in $t$ (where the \emph{height} of $(r_1,\ldots,r_m) \in \N^m$ is the sum $r_1+\cdots+r_m$). This approach is more in line with Kolchin's original proof of the existence of the Kolchin polynomial~\cite[Chapter II, Theorem 6]{Ko73}.

\smallskip
In some sense, the proof of our main result also reduces to counting the number of elements in an upward-closed subset of $\N^m$ or, more precisely, to looking at decreasing functions $\N^m\to \N$. However, the framework of finitary matroids---a fundamentally novel aspect of our approach---allows our main theorem to be quickly and readily applied. 
Many proofs of the existence of the classical Hilbert polynomial make use of generating functions, exploiting the relationship between rational generating functions and eventual polynomial growth through results like Lemma~\ref{lem:rationalimpliespoly} below. We also make use of this relationship in our proof. 
 Key to our approach is Proposition~\ref{prop:decreasingimpliesrational}, which describes the generating function $G_f$ associated to a decreasing function $f\colon\N^m\to \N$. This result appears to be new. 

\smallskip
While our proof is self-contained and fairly elementary, we show in Proposition~\ref{prop:whichonesappear} that once we isolate an appropriate decreasing function, our main theorem can be established \`{a} la Khovanskii by constructing a graded module and using the classical Hilbert polynomial. Consequently, we do not obtain any new numerical polynomials in our setting; see Corollary~\ref{cor:whichonesappear}.

\smallskip
Let us state our main results more precisely. Let $(X,\cl)$ be a finitary
matroid and let $\rk$ be the corresponding rank function; see Subsection~\ref{sec:matroid} for definitions.  Let $m \in \N^{>0}$, and let $\Phi\coloneqq (\phi_1,\ldots,\phi_m)$ be a finite tuple of commuting maps $X\to X$. The tuple $\Phi$ is said to be a \textbf{triangular system} if
\[
a \in \cl(B)\Longrightarrow \phi_i a \in \cl(\phi_1B \cup \cdots \cup \phi_iB)
\]
for all $i \in \{1,\ldots,m\}$, all $a \in X$, and all $B \subseteq X$. For $\rv= (r_1,\ldots,r_m) \in \N^m$, we let $|\rv| = r_1+\cdots+r_m$, and we let $\phi^{\rv}\colon X\to X$ be the composite map $\phi^{\rv}=\phi_1^{r_1}\cdots\phi_m^{r_m}$. For $t \in \N$ and $A \subseteq X$, put 
\[
\Phi^{(t)}(A)\coloneqq\big\{\phi^{\rv}(a):a \in A\text{ and }|\rv| = t\big\}.
\]
We prove the following:

\begin{thm*}
Suppose that $\Phi$ is a triangular system, and let $A, B\subseteq X$ with $A$ finite. Then there is a polynomial $P \in \Q[Y]$ of degree at most $m-1$ such that 
\[
\rk(\Phi^{(t)}(A)|\Phi^{(t)}(B))=P(t)
\]
for all sufficiently large $t \in \N$.
\end{thm*}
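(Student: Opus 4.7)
The plan is to reduce to Proposition~\ref{prop:decreasingimpliesrational} and Lemma~\ref{lem:rationalimpliespoly} by producing, from $(A,B,\Phi)$, a family of decreasing functions $\N^m \to \{0,1\}$ whose summed values along height-$t$ slices recover the relative rank $\rk(\Phi^{(t)}(A) \mid \Phi^{(t)}(B))$. Fix an arbitrary total order $a_1 < \cdots < a_n$ on $A$, and equip $\N^m$ with the reverse-lexicographic order $\prec$, in which $\sv \prec \rv$ iff the \emph{largest} coordinate $k$ with $\sv_k \ne \rv_k$ satisfies $\sv_k < \rv_k$. Extend $\prec$ to a total order on $A \times \N^m$ by breaking ties through the order on $A$. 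For $(a, \rv) \in A \times \N^m$ and $t = |\rv|$, declare $\chi(a, \rv) = 1$ iff
\[
\phi^{\rv}(a) \notin \cl\bigl(\Phi^{(t)}(B) \cup \{\phi^{\sv}(a') : (a', \sv) \prec (a, \rv),\ |\sv| = t\}\bigr),
\]
and $\chi(a, \rv) = 0$ otherwise.

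The first claim is that $\sum_{a \in A,\,|\rv| = t} \chi(a, \rv) = \rk(\Phi^{(t)}(A) \mid \Phi^{(t)}(B))$; this is a standard greedy basis argument, the $\chi = 1$ entries forming a maximal $\Phi^{(t)}(B)$-independent subset of $\Phi^{(t)}(A)$. The second, and less evident, claim is that for each fixed $a$ the function $\rv \mapsto \chi(a, \rv)$ is decreasing on $\N^m$. The key combinatorial step here is to verify that $\sv \prec \rv$ and $j \le i$ imply $\sv + e_j \prec \rv + e_i$ (elementary case analysis on the largest coordinate at which $\sv$ and $\rv$ differ). Granting this, if $\chi(a, \rv) = 0$ then $\phi^{\rv}(a) \in \cl(C)$ for $C = \Phi^{(t)}(B) \cup \{\phi^{\sv}(a') : (a', \sv) \prec (a, \rv),\ |\sv| = t\}$, and the triangular hypothesis gives
\[
\phi^{\rv + e_i}(a) = \phi_i\bigl(\phi^{\rv}(a)\bigr) \in \cl\bigl(\phi_1 C \cup \cdots \cup \phi_i C\bigr).
\]
Since $\phi_j \Phi^{(t)}(B) \subseteq \Phi^{(t+1)}(B)$ and the combinatorial fact sends each $\phi_j\phi^{\sv}(a')$ with $j \le i$ into the reference set defining $\chi(a, \rv + e_i)$, we conclude $\chi(a, \rv + e_i) = 0$, as desired.

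With both claims in hand, $\rk(\Phi^{(t)}(A) \mid \Phi^{(t)}(B))$ is presented as the coefficient of $z^t$ in the diagonal specialization of a finite sum of generating functions of decreasing $\N^m \to \N$ functions, each rational by Proposition~\ref{prop:decreasingimpliesrational}; Lemma~\ref{lem:rationalimpliespoly} then yields eventual polynomial growth in $t$, with degree bounded by $m-1$ since $\chi \le 1$ forces $\rk(\Phi^{(t)}(A) \mid \Phi^{(t)}(B)) \le |A|\binom{t+m-1}{m-1}$. I expect the main obstacle to be pinning down the correct total order on $\N^m$: the ordinary lexicographic order \emph{fails} the required combinatorial fact (e.g.\ $\sv = (0,2) <_{\mathrm{lex}} (1,0) = \rv$ but $\sv + e_1 = (1,2) >_{\mathrm{lex}} (1,1) = \rv + e_2$), and one needs reverse-lex to mirror the asymmetry of the triangular hypothesis, in which $\phi_i$ releases closure information only about $\phi_1, \ldots, \phi_i$.
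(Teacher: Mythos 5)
Your proof is correct and is essentially the paper's argument for the $k=1$ case of Theorem~\ref{thm:hilbert}: the order you call reverse-lexicographic is exactly the paper's $<_{\lex}$ (lexicographic with emphasis on the last coordinate), and summing your indicators $\chi(a,\,\cdot\,)$ over $a\in A$ reproduces the paper's decreasing function $f^{\Phi}_{A|B}(\uv)=\rk\!\big(\phi^{\uv}(A)\,\big|\,\Theta_{\uv}(A)\,\Phi^{(|\uv|)}(B)\big)$ from Lemma~\ref{lem:hilbSupward}, after which the generating-function machinery (Proposition~\ref{prop:decreasingimpliesrational} and Lemma~\ref{lem:rationalimpliespoly}) is applied identically. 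The only cosmetic difference is that you decompose $f^{\Phi}_{A|B}$ into $0$/$1$-valued indicators and verify monotonicity directly from the closure-theoretic definition of triangularity, whereas the paper keeps $f^{\Phi}_{A|B}$ intact and uses the rank formulation of Lemma~\ref{lem:tritorank}.
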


The above theorem is a special case of a multivariate version, which can be used to recover multivariate generalizations of the results considered above, such as Nathanson's generalization of Khovanskii's sumset theorem~\cite{Na00} and Levin's multivariate generalizations of the Kolchin polynomial~\cite{Le20}. Fix
\[
0=m_1<m_2<\cdots<m_{k}<m_{k+1}=m,
\]
and set $d_i\coloneqq m_{i+1}- m_i$ for $i \in \{1,\ldots, k\}$. For each $i$, we set $\Phi_i\coloneqq(\phi_{m_i+1},\phi_{m_i+2},\ldots,\phi_{m_i+d_i})$, and we call the tuple $(\Phi_1,\ldots,\Phi_k)$ a \textbf{partition of $\Phi$}. For a tuple $\rv = (r_1,\ldots,r_m) \in \N^m$, we set 
\[
\Vert \rv\Vert\coloneqq(r_{m_1+1}+\cdots+r_{m_1+d_1},\ldots,r_{m_k+1}+\cdots+r_{m_k+d_k})\in\N^k,
\]
and for $\sv \in \N^k$ and $A \subseteq X$, we put $\Phi^{(\sv)}(A)\coloneqq \big\{\phi^{\rv}(a):a \in A\text{ and }\Vert\rv\Vert =\sv\big\}$.

\begin{thmA}\label{thm:hilbert}
Suppose that each $\Phi_i$ is a triangular system, and let $A, B\subseteq X$ with $A$ finite. Then there is a polynomial $P^\Phi_{A|B} \in \Q[Y_1,\ldots,Y_k]$ of degree at most $d_i-1$ in each variable $Y_i$ such that 
\[
\rk(\Phi^{(\sv)}(A)|\Phi^{(\sv)}(B))=P^\Phi_{A|B}(\sv)
\]
for $\sv = (s_1,\ldots,s_k) \in \N^k$ with $\min\{s_1,\ldots,s_k\}$ sufficiently large.
\end{thmA}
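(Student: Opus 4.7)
The plan is to reduce the problem to counting lattice points in certain downward-closed subsets of $\N^m$, and then invoke the generating-function machinery developed earlier in the paper. First, fix an enumeration of the finite set $A$ and equip $\N^m$ with a well-ordering $\prec$ that within each block $\{m_i+1,\ldots,m_{i+1}\}$ is reverse-lexicographic (tuples with larger ``earlier'' coordinates precede others). Extend to a well-ordering on pairs $(a,\rv)\in A\times\N^m$ lexicographically ($a$ first, then $\rv$). For each $a\in A$, set
\[
N_a \coloneqq \bigl\{\rv\in\N^m : \phi^{\rv}(a)\notin \cl\bigl(\{\phi^{\uv}(a') : (a',\uv)\prec(a,\rv),\ \Vert\uv\Vert=\Vert\rv\Vert\}\cup\Phi^{(\Vert\rv\Vert)}(B)\bigr)\bigr\}.
\]
By the standard enumeration computation of rank in a finitary matroid,
\[
\rk\bigl(\Phi^{(\sv)}(A)\mid\Phi^{(\sv)}(B)\bigr)=\sum_{a\in A}\bigl\lvert\{\rv\in N_a : \Vert\rv\Vert=\sv\}\bigr\rvert,
\]
so it suffices to analyze each summand individually.

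\smallskip
The key structural claim is that each $N_a$ is downward closed in the coordinatewise order on $\N^m$. Suppose $\rv\notin N_a$, and fix $\ell\in\{1,\ldots,m\}$ lying in block $\Phi_i$, say $\ell=m_i+j$. Let $C$ denote the set appearing in the definition of $N_a$ at index $\rv$, so $\phi^{\rv}(a)\in\cl(C)$. The triangular system property for $\Phi_i$ gives
\[
\phi^{\rv+e_\ell}(a)=\phi_\ell\phi^{\rv}(a)\in\cl\bigl(\phi_{m_i+1}C\cup\cdots\cup\phi_\ell C\bigr).
\]
Each element on the right has the form $\phi^{\uv+e_{m_i+p}}(a')$ with $1\leq p\leq j$, and with either $(a',\uv)\prec(a,\rv)$ and $\Vert\uv\Vert=\Vert\rv\Vert$, or $a'\in B$ and $\Vert\uv\Vert=\Vert\rv\Vert$. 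In the second case the element lies in $\Phi^{(\Vert\rv+e_\ell\Vert)}(B)$ directly. In the first case, since $m_i+p$ and $\ell$ lie in block $i$ one has $\Vert\uv+e_{m_i+p}\Vert=\Vert\rv+e_\ell\Vert$, and a case analysis using the reverse-lex choice within block $\Phi_i$ (and the hypothesis $p\leq j$) confirms $(a',\uv+e_{m_i+p})\prec(a,\rv+e_\ell)$. Hence $\rv+e_\ell\notin N_a$.

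\smallskip
Since $N_a$ is downward closed, its indicator function $\chi_{N_a}\colon\N^m\to\N$ is decreasing, so by Proposition~\ref{prop:decreasingimpliesrational} its generating function is rational with denominator dividing $\prod_{j=1}^m(1-X_j)$. Substituting $X_j=Y_i$ whenever $j$ lies in block $i$ yields the generating function $\sum_{\sv\in\N^k}h_a(\sv)Y^{\sv}$ in the form $Q_a(Y)/\prod_{i=1}^k(1-Y_i)^{d_i}$, where $h_a(\sv)\coloneqq\lvert\{\rv\in N_a:\Vert\rv\Vert=\sv\}\rvert$. Lemma~\ref{lem:rationalimpliespoly} then guarantees that $h_a$ agrees with a polynomial of degree at most $d_i-1$ in each variable $Y_i$ for $\min\{s_1,\ldots,s_k\}$ sufficiently large. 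Summing the $h_a$ over the finite set $A$ produces the polynomial $P^\Phi_{A|B}$.

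\smallskip
The main obstacle I expect is the bookkeeping in the structural claim: one must verify that the triangular property's partial closure sets $\phi_{m_i+1}C,\ldots,\phi_\ell C$ re-express in terms of pairs preceding $(a,\rv+e_\ell)$ at the new $\Vert\cdot\Vert$-level. The case $p=j$ is automatic since adding the same basis vector to both sides preserves $\prec$, but the cases $p<j$ are precisely what force the reverse-lex-within-a-block choice of ordering. Once this combinatorial check is in place, the remainder of the argument reduces to invoking the already-developed generating function and polynomial-growth results.
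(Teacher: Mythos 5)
Your argument is correct, and it takes a genuinely different (though closely related) route from the paper's. The paper enumerates the operators $\phi^{\uv}$ lexicographically within each $\Vert\cdot\Vert$-level and records, for each $\uv$, the \emph{rank} $f^\Phi_{A|B}(\uv)=\rk\big(\phi^{\uv}(A)\big|\Theta_{\uv}(A)\Phi^{(\Vert\uv\Vert)}(B)\big)$ of the whole block $\phi^{\uv}(A)$ over the preceding blocks; Lemma~\ref{lem:tritorank} (the rank reformulation of triangularity) is then used to show this rank-valued function $\N^m\to\{0,\ldots,|A|\}$ is decreasing. You instead enumerate the pairs $(a,\rv)$ with $a$ as the primary key, which splits the same rank into $|A|$ indicator functions $\chi_{N_a}$ of downward-closed sets $N_a\subseteq\N^m$, and you prove downward-closure of each $N_a$ directly from the closure form of the triangularity axiom. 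Both decompositions feed into the same engine (Proposition~\ref{prop:decreasingimpliesrational} and Lemma~\ref{lem:rationalimpliespoly}), and both are correct; yours is a finer refinement at the price of having to choose and justify an order on pairs, whereas the paper's rank-valued $f^\Phi_{A|B}$ avoids ordering the elements of $A$ altogether. Two small remarks on your bookkeeping: the ``reverse-lex within each block'' relation is not literally a well-order on all of $\N^m$, but this is harmless since you only ever compare tuples with equal $\Vert\cdot\Vert$; and the paper's $<_{\lex}$ with emphasis on the last coordinate satisfies the same two properties you need (translation invariance, and $\rv+\eev_{m_i+p,m}\prec\rv+\eev_{m_i+j,m}$ for $p<j$ in block $i$), so either choice works. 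Your approach is close in spirit to Kolchin's original argument, which the introduction describes as reducing to counting elements in an upward-closed subset of $\N^m$; the paper's Proposition~\ref{prop:whichonesappear} recovers this ideal-counting picture after the fact, via the level sets of $f^\Phi_{A|B}$, rather than building it in from the start as you do.
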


We call the polynomial $P^\Phi_{A|B}$ in Theorem~\ref{thm:hilbert} the \textbf{dimension polynomial of $A$ over $B$ with respect to the partition $(\Phi_1,\ldots,\Phi_k)$}. To apply Theorem~\ref{thm:hilbert} to the case of differential fields, we need to consider a slightly more general framework than a triangular system. We say that the system $\Phi= (\phi_1,\ldots,\phi_m)$ is \textbf{quasi-triangular} if the augmented system $(\id,\phi_1,\ldots,\phi_m)$ is triangular, where $\id\colon X\to X$ is the identity map. Let $\preceq$ denote the product order on $\N^k$, and for $\sv \in \N^k$ and $A \subseteq X$, put $\Phi^{\preceq(\sv)}(A)\coloneqq \big\{\phi^{\rv}(a):a \in A\text{ and }\Vert\rv\Vert \preceq\sv\big\}$.
\begin{corA}\label{cor:kolchin}
Suppose that each $\Phi_i$ is a quasi-triangular system, and let $A, B\subseteq X$ with $A$
finite. 
Then there is a polynomial $Q^\Phi_{A|B} \in \Q[Y_1,\ldots,Y_k]$ of degree at most $d_i$ in each variable $Y_i$ such that 
\[
\rk(\Phi^{\preceq(\sv)}(A)|\Phi^{\preceq(\sv)}(B))=Q^\Phi_{A|B}(\sv)
\]
for $\sv = (s_1,\ldots,s_k) \in \N^k$ with $\min\{s_1,\ldots,s_k\}$ sufficiently large.
\end{corA}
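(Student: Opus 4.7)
The plan is to reduce Corollary~\ref{cor:kolchin} directly to Theorem~\ref{thm:hilbert} by passing to the augmented system. Given the partition $(\Phi_1,\ldots,\Phi_k)$ with each $\Phi_i$ quasi-triangular, I form the augmented tuples $\Phi'_i\coloneqq(\id,\phi_{m_i+1},\ldots,\phi_{m_i+d_i})$, which by definition of quasi-triangularity are triangular systems. Concatenating these gives a new finite tuple $\Phi'$ of commuting maps on $X$ with partition $(\Phi'_1,\ldots,\Phi'_k)$, where the $i$-th block now has size $d'_i=d_i+1$.

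The key observation is that the set-valued operator $\Phi'^{(\sv)}$ associated to the augmented system computes exactly the same thing as $\Phi^{\preceq(\sv)}$. Indeed, a tuple $\rv'\in\N^{m+k}$ with $\Vert\rv'\Vert=\sv$ is determined by choosing, in each block $i$, a nonnegative integer $r'_{i,0}$ (the exponent of $\id$) and a tuple $\rv_i$ of exponents for the $\phi$'s from block $i$ summing to $s_i-r'_{i,0}$. Since $\id$ acts trivially, $\phi'^{\rv'}(a)=\phi^{\rv}(a)$ where $\rv$ is $\rv'$ with the identity components deleted, and $\rv$ ranges exactly over those tuples with $\Vert\rv\Vert\preceq\sv$. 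Therefore
\[
\Phi'^{(\sv)}(A)=\Phi^{\preceq(\sv)}(A), \qquad \Phi'^{(\sv)}(B)=\Phi^{\preceq(\sv)}(B).
\]

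Applying Theorem~\ref{thm:hilbert} to $\Phi'$ and its partition now yields a polynomial $P^{\Phi'}_{A|B}\in\Q[Y_1,\ldots,Y_k]$ of degree at most $d'_i-1=d_i$ in each $Y_i$ such that $\rk(\Phi'^{(\sv)}(A)|\Phi'^{(\sv)}(B))=P^{\Phi'}_{A|B}(\sv)$ for $\min\{s_1,\ldots,s_k\}$ sufficiently large. Setting $Q^\Phi_{A|B}\coloneqq P^{\Phi'}_{A|B}$ gives exactly the conclusion of the corollary.

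There is no real obstacle here: the whole content lies in identifying $\Phi^{\preceq(\sv)}$ with $\Phi'^{(\sv)}$, which is just the standard trick that sums $\preceq\sv$ can be encoded as sums $=\sv$ by introducing a slack identity coordinate in each block. The degree bound $d_i$ (rather than $d_i-1$) in Corollary~\ref{cor:kolchin} falls out automatically from the fact that the augmentation increases each block size by one.
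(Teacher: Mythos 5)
Your proposal is correct and is exactly the paper's argument: form the augmented system by inserting the identity map into each block (the paper writes $\Phi_*$ for your $\Phi'$, with partition $((\Phi_1)_*,\ldots,(\Phi_k)_*)$), observe that $\Phi^{\preceq(\sv)}$ and $\Phi_*^{(\sv)}$ act identically on $X$, and apply Theorem~\ref{thm:hilbert} to the augmented system, with the degree bound $d_i$ falling out from the block sizes $d_i+1$. The only cosmetic difference is that the paper records the identification $\Phi^{\preceq(\sv)}=\Phi_*^{(\sv)}$ once and for all in the preliminaries rather than inside the proof of the corollary.
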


We call the polynomial $Q^\Phi_{A|B}$ in Corollary~\ref{cor:kolchin} the \textbf{cumulative dimension polynomial of $A$ over $B$ with respect to the partition $(\Phi_1,\ldots,\Phi_k)$}. Any triangular system is quasi-triangular, so Corollary~\ref{cor:kolchin} applies in a strictly broader context than Theorem~\ref{thm:hilbert}. Indeed, there are quasi-triangular systems for which the conclusion of Theorem~\ref{thm:hilbert} doesn't hold; see Subsection~\ref{subsec:counterexample}. As we will see, derivations on fields form a quasi-triangular system with respect to the matroid of algebraic closure.

\smallskip

In addition to the dimension and cumulative dimension polynomials, we define closure operators $\cl^\Phi$ and $\cl^\Phi_*$ on $X$, called the \textbf{$\Phi$-closure} and \textbf{$\Phi_*$-closure}, respectively, with respect to the partition $(\Phi_1,\ldots,\Phi_k)$ as follows:
\begin{align*}
a \in \cl^\Phi(B) &:\Longleftrightarrow \rk(\Phi^{(\sv)}(a)|\Phi^{(\sv)}(B))<|\Phi^{(\sv)}|\text{ for some }\sv \in \N^k\\
a \in \cl^{\Phi_*}(B) &:\Longleftrightarrow \rk(\Phi^{\preceq(\sv)}(a)|\Phi^{\preceq(\sv)}(B))<|\Phi^{\preceq(\sv)}|\text{ for some }\sv \in \N^k.
\end{align*}
Our second theorem relates these closure operators to the dimension and cumulative dimension polynomials.

\begin{thmA}\label{thm:phiclosure}
Let $A, B \subseteq X$ with $A$ finite. If each $\Phi_i$ is a triangular system, then $(X,\cl^\Phi)$ is a finitary matroid. The rank function $\rk^\Phi$ corresponding to this matroid satisfies the identity
\[
\rk^\Phi(A|B)=\lim\limits_{\min(\sv)\to \infty}\frac{\rk(\Phi^{(\sv)}(A)|\Phi^{(\sv)}(B))}{|\Phi^{(\sv)}|}.
\]
We also have
\[
P^\Phi_{A|B}(Y_1,\ldots,Y_k)=\frac{\rk^\Phi(A|B)}{(d_1-1)!\cdots (d_k-1)!}Y_1^{d_1-1}\cdots Y_k^{d_k-1}+\text{lower degree terms}.
\]
Likewise, if each $\Phi_i$ is quasi-triangular, then $(X,\cl^{\Phi_*})$ is a finitary matroid with rank function $\rk^{\Phi_*}$ satisfying the identities
\begin{align*}
\rk^{\Phi_*}(A|B) &= \lim\limits_{\min(\sv)\to \infty}\frac{\rk(\Phi^{\preceq(\sv)}(A)|\Phi^{\preceq(\sv)}(B))}{|\Phi^{\preceq(\sv)}|},\\
Q^\Phi_{A|B}(Y_1,\ldots,Y_k) &= \frac{\rk^{\Phi_*}(A|B)}{d_1!\cdots d_k!}Y_1^{d_1}\cdots Y_k^{d_k}+\text{lower degree terms}.
\end{align*}
\end{thmA}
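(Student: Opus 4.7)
My plan is to use Theorem~\ref{thm:hilbert} (and Corollary~\ref{cor:kolchin} for the cumulative version) essentially as a black box, combined with a propagation lemma specific to (quasi-)triangular systems, to identify the leading coefficient of the dimension polynomial with a rank function on $\cl^\Phi$. Define $\tilde\rk(A|B) := \lim_{\min(\sv)\to\infty} \rk(\Phi^{(\sv)}(A)|\Phi^{(\sv)}(B))/|\Phi^{(\sv)}|$; this limit exists by Theorem~\ref{thm:hilbert} as the ratio of two polynomials of matching top degree in each $Y_i$. I will show that $\tilde\rk$ takes value $0$ or $1$ on singletons according to whether $a \in \cl^\Phi(B)$, that it satisfies the matroid rank axioms, and that it therefore agrees with the rank function $\rk^\Phi$ of $\cl^\Phi$.

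\textbf{Step 1 (Propagation).} The key technical step is to show that a single dependence at partition level $\sv_0$ propagates to dependences at every higher partition level. Concretely: if $\phi^{\rv_0}(a) \in \cl(S)$ for some $S \subseteq \Phi^{(\sv_0)}(B \cup A) \setminus \{\phi^{\rv_0}(a)\}$, then for every $\rv' \in \N^m$ the element $\phi^{\rv_0 + \rv'}(a)$ lies in $\cl(\Phi^{(\sv_0 + \Vert\rv'\Vert)}(B \cup A) \setminus \{\phi^{\rv_0 + \rv'}(a)\})$. The proof is by induction on $|\rv'|$, applying one operator at a time: for $\phi_j \in \Phi_\ell$, the block-triangular hypothesis yields $\phi_j c \in \cl(\phi_{m_\ell+1}S \cup \cdots \cup \phi_j S)$, and every element of the right-hand side lies at a single partition level, namely the old level shifted by the $\ell$-th standard basis vector of $\N^k$. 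This lets the induction stay on partition-levels. The quasi-triangular version works identically after passing to the augmented block $(\id, \Phi_i)$, with dependences landing at levels $\preceq \sv_0 + \Vert\rv'\Vert$ rather than at a fixed level.

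\textbf{Step 2 (Matroid axioms and the rank formula).} If $a \notin \cl^\Phi(B)$ then $\rk(\Phi^{(\sv)}(a)|\Phi^{(\sv)}(B)) = |\Phi^{(\sv)}|$ for every $\sv$, so $\tilde\rk(\{a\}|B) = 1$. If instead $a \in \cl^\Phi(B)$ with witness $\sv_0$, Step~1 exhibits $\prod_i \binom{s_i - s_{0,i} + d_i - 1}{d_i - 1}$ many elements of $\Phi^{(\sv)}(a)$ that can be stripped from the span without changing $\rk(\Phi^{(\sv)}(B) \cup \Phi^{(\sv)}(a))$; this count is asymptotic to $|\Phi^{(\sv)}|$, so $\tilde\rk(\{a\}|B) = 0$. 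Reflexivity, monotonicity, and the finitary property of $\cl^\Phi$ are immediate from the definition together with the corresponding properties of $(X,\cl)$. For exchange and idempotence, apply the chain rule in $(X,\cl)$:
\[
\rk(\Phi^{(\sv)}(\{a,b\})|\Phi^{(\sv)}(B)) = \rk(\Phi^{(\sv)}(a)|\Phi^{(\sv)}(B)) + \rk(\Phi^{(\sv)}(b)|\Phi^{(\sv)}(B \cup \{a\})).
\]
Dividing by $|\Phi^{(\sv)}|$ and passing to the limit gives the chain rule for $\tilde\rk$; swapping $a$ and $b$ then yields the exchange axiom for $\cl^\Phi$, and subadditivity gives idempotence. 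Iterating the chain rule across a finite $A$ shows $\tilde\rk(A|B) = \rk^\Phi(A|B)$, which is the stated limit identity.

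\textbf{Step 3 (Leading coefficient and the cumulative case).} Since $|\Phi^{(\sv)}| = \prod_i \binom{s_i + d_i - 1}{d_i - 1}$ has leading coefficient $\prod_i 1/(d_i-1)!$ in the top monomial $\prod_i Y_i^{d_i-1}$, the identity $\rk^\Phi(A|B) = \lim P^\Phi_{A|B}(\sv)/|\Phi^{(\sv)}|$ forces the coefficient of $\prod_i Y_i^{d_i-1}$ in $P^\Phi_{A|B}$ to equal $\rk^\Phi(A|B)/\prod_i (d_i-1)!$, as claimed. The cumulative statements for $\cl^{\Phi_*}$ and $Q^\Phi_{A|B}$ follow by the same three-step argument applied to Corollary~\ref{cor:kolchin}, using the quasi-triangular form of the propagation lemma and the polynomial $|\Phi^{\preceq(\sv)}| = \prod_i \binom{s_i + d_i}{d_i}$, whose leading coefficient is $\prod_i 1/d_i!$ in $\prod_i Y_i^{d_i}$. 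The main obstacle is Step~1: Theorem~\ref{thm:hilbert} by itself does not imply that a single low-level dependence produces dependences of the correct asymptotic density at every larger level, and this density is precisely what is needed to read off the leading coefficient of $P^\Phi_{A|B}$ from the combinatorial closure $\cl^\Phi$.
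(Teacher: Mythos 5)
Your Step~1, and the way you use it in Step~2, has a genuine gap. There are two problems. First, the propagation claim as stated does not survive the induction: when you apply $\phi_j$ (the $j'$-th map in block $\Phi_\ell$) and invoke the triangular property, you obtain $\phi_j c \in \cl\bigl(\phi_{m_\ell+1}(S)\cup\cdots\cup\phi_{m_\ell+j'}(S)\bigr)$, but you never rule out that $\phi_j c$ is itself an element of that right-hand side. This happens whenever $S$ contains some $\phi^{\rv}(a)$ with $\rv + \eev_{m_\ell+i',m} = \rv_0 + \eev_{j,m}$ for $i'\leq j'$, and in that case the inclusion is trivially true and the inductive conclusion (``$\phi_jc$ is in the closure of the \emph{other} elements at the new level'') does not follow. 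Second, and more fundamentally, even if the conclusion of Step~1 were true as stated, it is too weak to support the count in Step~2. Knowing that each of $\prod_i\binom{s_i-s_{0,i}+d_i-1}{d_i-1}$ elements lies in the closure of \emph{all the other} elements at level $\sv$ does not show those elements can be ``stripped from the span'' simultaneously: one needs each of them to lie in the closure of the \emph{complement} of the whole set one wants to remove. (In a rank-$2$ matroid with $\{x_1,x_2,x_3\}$ in general position, each $x_i$ is in the closure of the other two, but you cannot delete two of them without dropping the rank.) The rank bound you need is genuinely stronger than ``everyone depends on everyone else.''

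The paper avoids both issues by ordering. It introduces the function $f^\Phi_{A|B}(\uv)=\rk\bigl(\phi^{\uv}(A)\mid\Theta_{\uv}(A)\,\Phi^{(\Vert\uv\Vert)}(B)\bigr)$ where $\Theta_{\uv}$ consists only of the \emph{lex-smaller} operators at the same level. The sum $\sum_{\Vert\uv\Vert=\sv}f^\Phi_{A|B}(\uv)$ then telescopes exactly to $\rk(\Phi^{(\sv)}(A)\mid\Phi^{(\sv)}(B))$, so vanishing of $f$ at one $\uv_0$ directly removes one unit of rank. In Lemma~\ref{lem:hilbSupward} the lexicographic comparison is precisely what guarantees that the elements $\phi_{m_i+j}\phi^{\uv}$ produced by the triangular hypothesis with $j<d$ lie in $\Theta_{\uv'}$, i.e.\ strictly lex-before $\uv'=\uv+\eev_{m_i+d,m}$, so the circularity you would have run into never arises. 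That $f$ is decreasing then yields exactly the density estimate you wanted in Step~2 (this is Lemma~\ref{lem:cldegree}): $f(\uv_0)=0$ implies $f(\uv)=0$ for all $\uv\succeq\uv_0$, and summing over the complement of the cone $\{\uv\succeq\uv_0\}$ gives $\rk(\Phi^{(\sv)}(a)\mid\Phi^{(\sv)}(B))\leq|\Phi^{(\sv)}|-|\Phi^{(\sv-\sv_0)}|$. Once you have that lemma, your Steps~2 and~3 (chain rule in the limit, leading coefficient by dividing by $s_1^{d_1-1}\cdots s_k^{d_k-1}$, and the $\Phi_*$ reduction for the cumulative case) are essentially the paper's argument. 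So the fix is not cosmetic: you must replace ``closure of all other elements at the same level'' with ``closure of lex-earlier elements at the same level together with $\Phi^{(\sv)}(B)$,'' which is the decreasing-$f$ formulation, not an upward-closed dependence set $\Phi^{(\sv)}(BA)\setminus\{\cdot\}$.
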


The $\Phi_*$-closure can be thought of as an analog of differentially algebraic closure; see~\cite[Chapter II, Section 8]{Ko73}. In fact, if each $\phi_i$ is a field derivation, then $\Phi_*$-closure is exactly the differentially algebraic closure, and the corresponding rank $\rk^{\Phi_*}$ is differential transcendence degree.

\subsection*{Outline}
After some preliminaries in Section~\ref{sec:prelim}, we prove our main theorems in Sections~\ref{sec:hilbert} and~\ref{sec:phiclosure}. In Section~\ref{sec:classical}, we collect some classical consequences of Theorem~\ref{thm:hilbert}, and in Section~\ref{sec:betti}, we consider an application to simplicial complexes. Some applications of Corollary~\ref{cor:kolchin} for difference-differential fields (as well as difference-differential exponential fields and differential o-minimal fields) are considered in Section~\ref{sec:dfields}.

\subsection*{Acknowledgements}
The first author was partially supported by GNSAGA--INdAM. The second author was located at the Fields Institute for Research in Mathematical Sciences and was supported by the National Science Foundation under Award No. 2103240. The authors would like to thank Piotr Kowalski, George Shakan, Francesco Gallinaro, James Freitag, and Matthias Aschenbrenner for helpful conversations. We would also like the referees for useful feedback, including an improvement to Proposition~\ref{prop:invariant}.

\section{Preliminaries}\label{sec:prelim}
\subsection{Notation and conventions}
Throughout, $\N$ denotes the set of natural numbers $\{0,1,2,\ldots\}$. Let $d \in \N^{>0}$, and let $\rv = (r_1,\ldots,r_d)$ and $\sv = (s_1,\ldots,s_d)$ range over $\N^d$. We write $\min(\rv)$ to mean $\min\{r_1,\ldots,r_d\}$. Let $\preceq$ denote the product order on $\N^d$, so
\[
\rv \preceq \sv :\Longleftrightarrow r_i \leq s_i \text{ for each }i = 1,\ldots,d,
\]
and let $<_{\lex}$ denote the lexicographic order on $\N^d$ with emphasis on the last coordinate, so 
\[
\rv <_{\lex} \sv :\Longleftrightarrow \text{there is $i \in\{1,\ldots,d\}$ such that $r_i <s_i$ and $r_j = s_j$ for $i<j\leq d$}.
\]
Let $\zero_d$ be the tuple $(0,\ldots,0)$ consisting of $d$ zeros, so $\zero_d$ is the minimal element of $\N^d$ with respect to both of the orders $\preceq$ and $\leq_{\lex}$. For $i \in \{1,\ldots,d\}$, let $\eev_{i,d}\coloneqq (0,\ldots,1,\ldots,0)$ be the tuple which consists of a 1 in the $i$th spot and zeros everywhere else.

\smallskip
We fix, for the remainder of this paper, numbers $0<k\leq m \in \N$, as well as a partition
\[
0=m_1<m_2<\cdots<m_{k}<m_{k+1}=m.
\]
We set $d_i\coloneqq m_{i+1}- m_i$ for $i \in \{1,\ldots, k\}$, and for a tuple $\rv = (r_1,\ldots,r_m) \in \N^m$, we set 
\[
\Vert \rv\Vert\coloneqq(r_{m_1+1}+\cdots+r_{m_1+d_1},\ldots,r_{m_k+1}+\cdots+r_{m_k+d_k})\in\N^k.
\]

\subsection{Finitary matroids, triangular systems, and quasi-triangular systems}
\label{sec:matroid}
Recall that a \textbf{finitary matroid} consists of a set $X$, together with a map $\cl\colon\cP(X)\to \cP(X)$ which satisfies the following conditions:
\begin{enumerate}
\item Reflexivity: $A \subseteq \cl(A)$;
\item Monotonicity: if $A\subseteq B\subseteq X$, then $cl(A) \subseteq \cl(B)$;
\item Idempotence: $\cl(\cl(A))=\cl(A)$ for $A \subseteq X$;
\item Finite character: if $A \subseteq X$ and $a\in \cl(A)$, then $a \in \cl(A_0)$ for some finite subset $A_0\subseteq A$;
\item Steinitz exchange: For $a,b \in X$ and $A \subseteq X$, if $a \in \cl(A\cup \{b\})\setminus\cl(A)$, then $b \in \cl(A\cup\{a\})$.
\end{enumerate}

\smallskip
More on finitary matroids can be found in~\cite{Ox92}, where they are called \emph{independence spaces}. Finitary matroids often appear in model theory, where they are called \emph{pregeometries}; see~\cite[Appendix C.1]{TZ12}. For the remainder of this paper, we fix a finitary matroid $(X,\cl)$, a positive natural number $m \in \N^{>0}$, and a finite tuple $\Phi\coloneqq (\phi_1,\ldots,\phi_m)$ of commuting maps $X\to X$. We will usually use $a,b$ to denote elements of $X$ and $A, B$ to denote subsets of $X$. We will often abuse notation and write things like ``$a\in \cl(ABb)$'' to mean ``$a\in\cl(A\cup B \cup \{b\})$.'' 

\smallskip
For $A \subseteq X$, we let $\cl_A$ denote the following closure operator:
\[
a \in \cl_A(B) :\Longleftrightarrow a \in \cl(AB).
\]
Then $(X,\cl_A)$ is also a finitary matroid, called the \textbf{relativization of $(X,\cl)$ at $A$}. We say that $B$ is \textbf{$\cl$-independent over $A$} if $b\not\in \cl_A(B \setminus \{b\})$ for all $b \in B$. A \textbf{basis for $B$ over $A$} is a subset $B_0\subseteq B$ which is $\cl$-independent over $A$ such that $B \subseteq \cl_A(B_0)$. Steinitz exchange ensures that any two bases for $B$ over $A$ have the same cardinality, called the \textbf{rank of $B$ over $A$} and denoted $\rk(B|A)$. We just write $\rk(B)$ for $\rk(B|\emptyset)$, and we use $\rk_A$ for the rank corresponding to the relativization $(X,\cl_A)$, so $\rk(B|A) = \rk_A(B)$.

\smallskip
Let $\Theta$ be the free commutative monoid on $\Phi$, so $\Theta$ consists of all operators $\phi^{\rv} \coloneqq \phi_1^{r_1}\cdots\phi_m^{r_m}$ for $\rv \in \N^m$. Note that $\phi^{\zero_m}$ is the identity map on $X$ (and also the identity element of $\Theta$) and that $\phi^{\eev_{i,m}} =\phi_i$ for $i = 1,\ldots,m$. For $\theta \in \Theta$, we let $\theta(A) \coloneqq \{\theta a:a \in A\}$, and for $\Theta_0\subseteq \Theta$, we let 
\[
\Theta_0(A)\coloneqq\bigcup_{\theta \in \Theta}\theta(A).
\]
Recall from the introduction that $\Phi$ is a \textbf{triangular system} for $(X,\cl)$ if
\[
a \in \cl(B)\Longrightarrow \phi_i a \in \cl(\phi_1B\cdots\phi_iB)
\]
for every $B \subseteq X$ and for each $i \in\{1,\ldots,m\}$. 
If $\Phi$ is a triangular system for $(X,\cl)$ and $A \subseteq X$ is closed under each map $\phi_i$, then one can easily verify that $\Phi$ is a triangular system for the relativization $(X,\cl_A)$. The following lemma on triangular systems will be used in the proof of the main theorem.

\begin{lemma}\label{lem:tritorank}
$\Phi$ is a triangular system if and only if for any $A,B \subseteq X$ and for each $i \in \{1,\ldots,m\}$, we have
\[
\rk\!\big(\phi_i(A)\big|\phi_1(AB)\cdots \phi_{i-1}(AB)\phi_i(B)\big)\leq\rk(A|B).
\]
\end{lemma}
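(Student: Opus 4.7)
The proof naturally splits into two directions. For the forward implication, fix $i$ and a basis $A_0 \subseteq A$ for $A$ over $B$, so $|A_0| = \rk(A|B)$ and $A \subseteq \cl(A_0 B)$. The plan is to show
\[
\phi_i(A) \;\subseteq\; \cl\bigl(\phi_1(AB)\cdots\phi_{i-1}(AB)\,\phi_i(B)\,\phi_i(A_0)\bigr),
\]
which immediately gives the rank bound since $|\phi_i(A_0)| \le |A_0| = \rk(A|B)$. To see the inclusion, take $a \in A$; then $a \in \cl(A_0 B)$, so the triangular system hypothesis applied to $A_0 B$ yields $\phi_i a \in \cl(\phi_1(A_0 B)\cdots\phi_i(A_0 B))$. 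Expanding $\phi_j(A_0 B) = \phi_j(A_0)\cup\phi_j(B)$ and using $\phi_j(A_0)\subseteq\phi_j(A)\subseteq\phi_j(AB)$ for $j < i$ gives the desired containment.

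For the reverse implication, I would argue by induction on $i$ that the rank inequalities imply the triangular property. Fix $a \in \cl(B)$ and apply the rank hypothesis with $A = \{a\}$: since $\rk(A|B) = 0$, we obtain
\[
\phi_i a \;\in\; \cl\bigl(\phi_1(\{a\}B)\cdots\phi_{i-1}(\{a\}B)\,\phi_i(B)\bigr) \;=\; \cl\bigl(\phi_1 B\cdots\phi_{i-1}B,\,\phi_1 a,\ldots,\phi_{i-1}a,\,\phi_i B\bigr).
\]
By the inductive hypothesis (applied at each $j<i$), $\phi_j a \in \cl(\phi_1 B \cdots \phi_j B) \subseteq \cl(\phi_1 B \cdots \phi_{i-1} B)$, so the elements $\phi_j a$ can be absorbed into the closure by monotonicity and idempotence, leaving $\phi_i a \in \cl(\phi_1 B\cdots\phi_i B)$. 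The base case $i=1$ is handled by exactly the same argument, with no auxiliary $\phi_j a$ terms to eliminate.

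There is no real obstacle here; the one subtlety worth flagging is that in the forward direction one must keep track of which arguments allow $A$ and which allow only $B$ (this is why the closure on the right involves $\phi_j(AB)$ for $j<i$ but only $\phi_i(B)$ for $j=i$). The inductive elimination in the reverse direction is the cleanest explanation for why that asymmetry is the correct one.
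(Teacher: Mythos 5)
Your proof is correct and follows essentially the same argument as the paper: the forward direction picks a basis $A_0$ for $A$ over $B$ and applies the triangular hypothesis to bound the rank by $|\phi_i(A_0)|$, and the reverse direction sets $A = \{a\}$, uses $\rk(a|B)=0$, and inductively eliminates the $\phi_j a$ terms for $j<i$. The remark about the asymmetry between $\phi_j(AB)$ for $j<i$ and $\phi_i(B)$ is a nice observation but not a deviation from the paper's line of reasoning.
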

\begin{proof}
Suppose that $\Phi$ is a triangular system.
Let $A_0$ be a $\cl$-basis for $A$ over $B$, so $A \subseteq \cl(A_0B)$. Since $\Phi$ is a triangular system, we have
\[
\phi_i(A)\subseteq\cl\!\big(\phi_1(A_0B)\cdots\phi_i(A_0B)\big)\subseteq\cl\!\big(\phi_1(AB)\cdots\phi_{i-1}(AB)\phi_i(A_0B)\big).
\]
This gives
\[
\rk\!\big(\phi_i(A)\big|\phi_1(AB)\cdots \phi_{i-1}(AB)\phi_i(B)\big)\leq|\phi_i(A_0)|\leq|A_0|=\rk(A|B).
\]
For the converse, let $a \in \cl(B)$. Then,
\[
\rk\!\big(\phi_{i}(a) \big| \phi_1(aB)\cdots \phi_{i-1}(aB)\phi_i(B)\big)\leq\rk(a|B)=0.
\]
By induction on $i = 1, \ldots, m$, we conclude that $\phi_ia \in\cl( \phi_1B\cdots \phi_{i-1}B\phi_iB)$.
\end{proof}

For the remainder of the paper, we let $(\Phi_1,\ldots,\Phi_k)$ be the partition of $\Phi$ given in the introduction, so $\Phi_i\coloneqq(\phi_{m_i+1},\phi_{m_i+2},\ldots,\phi_{m_i+d_i})$ for each $i$. 
For $\sv \in \N^k$ and $A \subseteq X$, we recall the sets
\[
\Phi^{(\sv)}(A)\coloneqq \big\{\phi^{\rv}(a):a \in A\text{ and }\Vert\rv\Vert =\sv\big\},\qquad \Phi^{\preceq(\sv)}(A)\coloneqq \big\{\phi^{\rv}(a):a \in A\text{ and }\Vert\rv\Vert \preceq\sv\big\}.
\]
For each $i\in \{1,\ldots,k\}$, let 
\[
(\Phi_{i})_*\coloneqq(\id,\phi_{m_i+1},\phi_{m_i+2},\ldots,\phi_{m_i+d_i}),
\]
 where $\id\colon X \to X$ is the identity map. Then $\big((\Phi_1)_*,(\Phi_2)_*,\ldots,(\Phi_k)_*\big)$ is a partition of the augmented system
\[
\Phi_*\coloneqq(\id,\phi_{m_1+1},\ldots,\phi_{m_1+d_1};\ldots;\id,\phi_{m_k+1},\ldots,\phi_{m_k+d_k}).
\]
Given $\sv \in \N^k$ and $\rv \in \N^m$ with $\Vert \rv\Vert \preceq \sv$, the map
$\phi^{\rv} \in \Phi^{\preceq (\sv)}$ acts the same way on $X$ as the map $\id^{\sv-\lVert\rv\rVert}\phi^{\rv} \in \Phi_*^{(\sv)}$, so we may identify $\Phi^{\preceq (\sv)}$ and $\Phi_*^{(\sv)}$. For each $i$, if $\Phi_i$ is quasi-triangular, then $(\Phi_i)_*$ is triangular, so many of our results on quasi-triangular systems will follow from the corresponding result on triangular systems, applied with $\Phi_*$ in place of~$\Phi$.

\smallskip
The main examples of (quasi)-triangular systems studied in this paper are tuples
of (quasi)-endomorphisms. Let $\psi\colon X \to X$ be a map. We say that $\psi$ is an
\textbf{endomorphism of $(X,\cl)$} (a.k.a. ``strong map'' in the matroid literature) if
\[
a \in \cl(B)\Longrightarrow \psi a \in \cl(\psi B).
\]
We say that $\psi$ is a \textbf{quasi-endomorphism of $(X,\cl)$} if 
\[
a \in \cl(B)\Longrightarrow \psi a \in \cl(B\psi B).
\]
If $\Phi$ is a (quasi)-triangular system, then $\phi_1$ is necessarily a (quasi)-endomorphism, and if $\phi_1,\ldots,\phi_m$ are (quasi)-endomorphisms, then $\Phi$ is (quasi)-triangular. Quasi-endomorphisms were first considered in~\cite[Section 3.1]{FK21}.

\subsection{Generating functions}
Let $\Y = (Y_1,\ldots,Y_k)$ be a tuple of variables. Given $\sv = (s_1,\ldots,s_k) \in \N^k$, we write $\Y^\sv$ for the monomial $Y_1^{s_1}\cdots Y_k^{s_k}$. A polynomial $P\in \Q[\Y]$ is said to have \textbf{degree at most $\sv$} if $P$ has degree at most $s_i$ in each variable $Y_i$, that is, if 
\[
P(\Y)=a\Y^{\sv} + \text{lower degree terms}
\]
for some $a \in \Q$.

\smallskip
Let $f\colon\N^k \to \N$ be a function. The \textbf{generating function of $f$} is the multivariate power series 
\[
G_f(\Y)=\sum_{\sv \in \N^k} f(\sv) \Y^\sv\in\Z[\![ \Y]\!].
\]
The following is well-known; we include here essentially the same proof given in~\cite[Lemma 2.1]{KMU20}.

\begin{lemma}\label{lem:rationalimpliespoly}
Suppose that $G_f$ is a rational function with numerator $R(\Y) \in \Q[\Y]$ of degree at most $\mv$ and denominator $(1-Y_1)^{d_1}\cdots (1-Y_k)^{d_k}$. Then there is $P\in \Q[\Y]$ of degree at most $(d_1 - 1,\ldots,d_k-1)$ such that $f(\sv) = P(\sv)$ whenever $\sv \succeq \mv$. This polynomial $P$ has the form
\[
P(\Y) = \frac{R(1,\ldots,1)}{(d_1-1)!\cdots(d_k-1)!}Y_1^{d_1-1}\cdots Y_k^{d_k-1}+\text{lower degree terms}.
\]
\end{lemma}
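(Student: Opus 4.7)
The plan is to expand the rational function $G_f$ as a product of a polynomial and a geometric-type power series, read off the coefficient of $\Y^\sv$ explicitly, and check that for $\sv \succeq \mv$ this coefficient becomes a genuine polynomial expression in $\sv$. First I would recall the classical single-variable identity
\[
\frac{1}{(1-Y_i)^{d_i}} = \sum_{n=0}^{\infty}\binom{n+d_i-1}{d_i-1}Y_i^n,
\]
and note that $\binom{n+d_i-1}{d_i-1}$ is a polynomial in $n$ of degree exactly $d_i-1$, with leading coefficient $1/(d_i-1)!$. Multiplying across $i=1,\dots,k$ yields a polynomial $Q(\Y)\in\Q[\Y]$ of degree at most $(d_1-1,\dots,d_k-1)$ such that
\[
\frac{1}{(1-Y_1)^{d_1}\cdots(1-Y_k)^{d_k}} = \sum_{\sv\in\N^k} Q(\sv)\,\Y^\sv,
\]
and whose leading monomial in each variable is $\tfrac{1}{(d_1-1)!\cdots(d_k-1)!}Y_1^{d_1-1}\cdots Y_k^{d_k-1}$.

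Next, writing $R(\Y) = \sum_{\rv\preceq\mv} c_{\rv}\Y^{\rv}$ and multiplying out the Cauchy product, I would read off
\[
f(\sv) = \sum_{\rv\preceq\mv,\,\rv\preceq\sv} c_{\rv}\,Q(\sv-\rv).
\]
The key observation is that when $\sv\succeq\mv$, the condition $\rv\preceq\sv$ is automatic for every $\rv\preceq\mv$, so in that range
\[
f(\sv) = \sum_{\rv\preceq\mv} c_{\rv}\,Q(\sv-\rv) =: P(\sv).
\]
This $P$ is a polynomial in $\sv$ of degree at most $(d_1-1,\dots,d_k-1)$ since each shifted summand $Q(\Y-\rv)$ is.

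Finally, I would identify the top-degree part. Each $Q(\Y-\rv)$ has the same leading monomial as $Q(\Y)$, namely $\prod_i Y_i^{d_i-1}/(d_i-1)!$, since shifting the argument does not affect the coefficient of the top monomial. Summing against the $c_{\rv}$ gives
\[
P(\Y) = \Bigl(\sum_{\rv\preceq\mv} c_{\rv}\Bigr)\cdot\frac{Y_1^{d_1-1}\cdots Y_k^{d_k-1}}{(d_1-1)!\cdots(d_k-1)!} + \text{lower degree terms},
\]
and the coefficient sum is exactly $R(1,\dots,1)$. There is no real obstacle here, the only mildly delicate point is the bookkeeping that ensures $\rv\preceq\sv$ holds automatically once $\sv\succeq\mv$, which is where the hypothesis on the degree of $R$ is used; everything else is a direct coefficient extraction from the partial-fractions-style expansion.
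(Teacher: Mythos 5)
Your proof is correct and takes essentially the same route as the paper: expand the denominator as a product of geometric-type series, take the Cauchy product against $R(\Y)=\sum_{\rv\preceq\mv}c_\rv\Y^\rv$, and observe that once $\sv\succeq\mv$ every term with $\rv\preceq\mv$ contributes, yielding the polynomial $P(\sv)=\sum_{\rv\preceq\mv}c_\rv Q(\sv-\rv)$ with the claimed leading coefficient. The paper writes out the binomial product $\binom{s_1-r_1+d_1-1}{d_1-1}\cdots\binom{s_k-r_k+d_k-1}{d_k-1}$ explicitly where you abbreviate it as $Q(\sv-\rv)$, but the arguments are identical.
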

\begin{proof}
Write $R(\Y) = \sum_{\rv\preceq \mv}a_{\rv}\Y^{\rv}$, so
\[
G_f(\Y)=\frac{R(\Y)}{(1-Y_1)^{d_1}\cdots (1-Y_k)^{d_k}}= \sum_{\rv\preceq \mv}\frac{a_{\rv}\Y^{\rv}}{(1-Y_1)^{d_1}\cdots (1-Y_k)^{d_k}}.
\]
For each $\rv$, a simple computation gives
\[
\frac{a_{\rv}\Y^{\rv}}{(1-Y_1)^{d_1}\cdots (1-Y_k)^{d_k}} = \sum_{\sv\succeq \rv}a_{\rv}\binom{s_1-r_1+d_1-1}{d_1-1}\cdots \binom{s_k-r_k+d_k-1}{d_k-1}\Y^{\sv}
\]
Comparing coefficients, we get for each $\sv \succeq \mv$ that 
\[
f(\sv)= \sum_{\rv\preceq \mv}a_{\rv}\binom{s_1-r_1+d_1-1}{d_1-1}\cdots \binom{s_k-r_k+d_k-1}{d_k-1}.
\]
Putting $P(\Y) \coloneqq \sum_{\rv\preceq \mv}a_{\rv}\binom{Y_1-r_1+d_1-1}{d_1-1}\cdots \binom{Y_k-r_k+d_k-1}{d_k-1}$, we have $f(\sv) = P(\sv)$ for $\sv \succeq \mv$. Note that 
\begin{align*}
P(\Y) &= \frac{\sum_{\rv\preceq \mv}a_{\rv}}{(d_1-1)!\cdots(d_k-1)!}Y_1^{d_1-1}\cdots Y_k^{d_k-1}+\text{lower degree terms}\\
&= \frac{R(1,\ldots,1)}{(d_1-1)!\cdots(d_k-1)!}Y_1^{d_1-1}\cdots Y_k^{d_k-1}+\text{lower degree terms}.\qedhere
\end{align*}
\end{proof}

The function $f$ is said to be \textbf{decreasing} if $f(\rv) \leq f(\sv)$ whenever $\rv \succeq \sv$. Suppose that $f$ is decreasing. For $n \in \N$, set
\[
S_n(f)\coloneqq\{\sv \in \N^k:f(\sv) \leq n\},
\]
so each $S_n(f)$ is a $\preceq$-upward closed subset of $\N^k$ and $S_n(f) = \N^k$ for
$n \geq f(\zero_k)$. Let $M_n(f)$ be the set of $\preceq$-minimal elements of $S_n(f)$,
so each $M_n(f)$ is finite by Dickson's lemma. Set $M(f) \coloneqq \bigcup_{n\in \N}
M_n(f)$, and let $\mv(f)$ be the $\preceq$-least upper bound of $M(f)$ (notice that
the set $M(f)$ is finite).

\begin{proposition}\label{prop:decreasingimpliesrational}
If $f$ is decreasing, then $G_f$ is a rational function with numerator of degree at most $\mv(f)$ and denominator $(1-Y_1)\cdots (1-Y_k)$.
\end{proposition}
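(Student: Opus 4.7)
The plan is to decompose $f$ via its sub-level sets $S_n(f)$ and compute each contribution by inclusion--exclusion. Since $f$ is decreasing and $\N$-valued, it attains its maximum $N := f(\zero_k)$ at $\zero_k$, so takes only finitely many values. For every $\sv\in\N^k$, $f(\sv) = \#\{n\in\N : \sv\notin S_n(f)\} = \#\{n < N : \sv\notin S_n(f)\}$, and summing over $\sv$ yields
\[
G_f(\Y) = \sum_{n=0}^{N-1}\bigl(G_{\N^k}(\Y) - G_{S_n(f)}(\Y)\bigr),
\]
where $G_U(\Y) := \sum_{\sv\in U}\Y^\sv$ for $U\subseteq\N^k$. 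Since $G_{\N^k}(\Y) = 1/\prod_j(1-Y_j)$, it suffices to express each $G_{S_n(f)}$ over the common denominator $(1-Y_1)\cdots(1-Y_k)$ and control the resulting numerator degree.

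For this, fix $n < N$ and enumerate $M_n(f) = \{\mv_1,\ldots,\mv_r\}$ (finite by Dickson's lemma, as already observed). Then $S_n(f) = \bigcup_i(\mv_i + \N^k)$, and since $\bigcap_{i\in I}(\mv_i + \N^k) = \mv_I + \N^k$ with $\mv_I$ the coordinate-wise maximum of $\{\mv_i : i \in I\}$, inclusion--exclusion combined with the identity $G_{\mv + \N^k}(\Y) = \Y^\mv/\prod_j(1-Y_j)$ gives
\[
G_{S_n(f)}(\Y) = \frac{\sum_{\emptyset\neq I\subseteq\{1,\ldots,r\}}(-1)^{|I|+1}\Y^{\mv_I}}{(1-Y_1)\cdots(1-Y_k)}.
\]
Every monomial appearing in this numerator has exponent $\preceq$ the $\preceq$-least upper bound of $M_n(f)$, and the same bound holds for the numerator of $G_{\N^k}(\Y) - G_{S_n(f)}(\Y)$ once placed over the common denominator.

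Summing over $n = 0, \ldots, N-1$ then exhibits $G_f$ as a rational function with denominator $(1-Y_1)\cdots(1-Y_k)$ whose numerator is a polynomial each of whose monomials has exponent $\preceq$ the least upper bound of $\bigcup_{n<N}M_n(f)$. Since $S_n(f) = \N^k$ (and hence $M_n(f) = \{\zero_k\}$) for $n\geq N$, nothing new is contributed there, so this bound is exactly $\mv(f)$; note moreover that $M(f)$ is a finite union of finite sets, so $\mv(f)\in\N^k$ is genuinely well-defined. The main technical step is the inclusion--exclusion identification of the numerator exponents, where one must track how the coordinate-wise maxima of the minimal elements control the numerator degree; once this is in hand, the remainder of the argument is a finite summation and a bit of bookkeeping.
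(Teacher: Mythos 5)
Your proof is correct, but it takes a genuinely different route from the paper's. The paper multiplies $G_f$ directly by $(1-Y_1)\cdots(1-Y_k)$ and shows, coordinate by coordinate, that the product's coefficients vanish beyond $\mv(f)$; the key computation is that the finite difference $f(\rv,t)-f(\rv,t-1)$ vanishes once $t>m_k$, which follows from the decreasing property and Dickson's lemma. You instead use a layer-cake decomposition $f(\sv)=\#\{n<N:\sv\notin S_n(f)\}$ to write $G_f = \sum_{n<N}\bigl(G_{\N^k}-G_{S_n(f)}\bigr)$, then compute each $G_{S_n(f)}$ by inclusion--exclusion over the finitely many shifted orthants $\mv_i+\N^k$ covering the upward-closed set $S_n(f)$. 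Both proofs ultimately hinge on the finiteness of $M_n(f)$ (Dickson), but yours is more combinatorial-geometric where the paper's is more of a direct finite-difference calculation. Your approach also has the modest advantage of producing the numerator explicitly as a signed sum of monomials $\Y^{\mv_I}$ (which, incidentally, makes the claimed degree bound transparent), at the cost of being somewhat longer; it is also closer in spirit to the Nathanson--Ruzsa reduction to counting in upward-closed subsets of $\N^m$ that the paper mentions in the introduction. One small point worth noting: you justify that $\mv(f)$ is well-defined by observing $M_n(f)=\{\zero_k\}$ for $n\ge N$, hence $M(f)$ is finite -- the paper leaves this implicit, so making it explicit is a minor bonus.
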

\begin{proof}
Suppose that $f$ is decreasing, set $H \coloneqq (1-Y_1)\cdots (1-Y_k)G_f$, and let $\mv(f) = (m_1,\ldots,m_k)$. We need to show for each $i \in \{1,\ldots,k\}$ and each $\sv = (s_1,\ldots,s_k)\in \N^k$ that if $s_i> m_i$, then the coefficient of $\Y^{\sv}$ in $H$ is zero. By symmetry, it suffices to consider the case $i = k$. Take power series $H_0,H_1,H_2,\ldots$ in the variables $(Y_1,\ldots,Y_{k-1})$ such that
\[
H=H_0+ H_1Y_k+H_2Y_k^2 + \cdots.
\]
We fix $t > m_k$, and we will show that $H_t = 0$. Distributing $(1-Y_k)$ through the series $G_f$, we see that
\[
H_t(Y_1,\ldots,Y_{k-1})=(1-Y_1)\cdots (1-Y_{k-1})\sum_{\rv \in \N^{k-1}}(f(\rv,t)- f(\rv,t-1))Y_1^{r_1}\cdots Y_{k-1}^{r_{k-1}},
\]
so it suffices to show that $f(\rv,t)= f(\rv,t-1)$ for each $\rv \in \N^{k-1}$. Let $\rv$ be given and let $n = f(\rv,t)$. Take $\sv \in M_n(f)$ with $\sv \preceq (\rv,t)$. Since $s_k \leq m_k< t$, we see that $(\rv,t-1) \succeq \sv$ as well, so $f(\rv,t-1) \leq n$. Since $f$ is decreasing, we conclude that $f(\rv,t-1) = n= f(\rv,t)$.
\end{proof}

\begin{remark}
We have another proof of Proposition~\ref{prop:decreasingimpliesrational}, using that the partial order on decreasing functions $\N^k\to \N$ given by $f\leq g:\Longleftrightarrow f(\sv)\leq g(\sv)$ for all $\sv \in \N^k$ is well-founded. The proof is as follows:\ let $f\colon \N^k\to \N$ be decreasing and assume that Proposition~\ref{prop:decreasingimpliesrational} holds for all decreasing functions $\N^k\to \N$ less than $f$, as well as all decreasing functions $\N^{k-1}\to \N$ (both base cases hold trivially). Let $g\colon \N^k\to \N$ be the function $(\rv,t) \mapsto f(\rv,t+1)$, so $g \leq f$, and let $h\colon \N^{k-1}\to \N$ be the function $\rv\mapsto f(\rv,0)$. First, consider the case that $g = f$. Then $f(\rv,t) = f(\rv,0) = h(\rv)$ for all $\rv \in \N^{k-1}$ and all $t \in \N$ and so $\mv(f) = (\mv(h),0)$. We have
\[
G_f(\Y)=\sum_{t \in \N}\sum_{\rv \in \N^{k-1}}f(\rv,0)\Y^{(\rv,t)}=(1-Y_k)^{-1}\sum_{\rv \in \N^{k-1}}h(\rv)Y_1^{r_1}\cdots Y_{k-1}^{r_{k-1}}=\frac{G_h(Y_1,\ldots,Y_{k-1})}{(1-Y_k)},
\]
so Proposition~\ref{prop:decreasingimpliesrational} holds for $f$ by our induction hypothesis. Now, consider the case that $g<f$. In this case, $\mv(f)$ is the $\preceq$-least upper bound of $(\mv(h),0)$ and $\mv(g) + \eev_{k,k}$. We have
\[
G_f(\Y)=\sum_{\rv \in \N^{k-1}}f(\rv,0)\Y^{(\rv,0)} + \sum_{t \in \N}\sum_{\rv \in \N^{k-1}}f(\rv,t+1)\Y^{(\rv,t+1)}=G_h(Y_1,\ldots,Y_{k-1}) + Y_kG_g(\Y),
\]
and we again conclude that Proposition~\ref{prop:decreasingimpliesrational} holds for $f$ by our induction hypothesis.
\end{remark}

\section{The dimension and cumulative polynomials}\label{sec:hilbert}
\noindent
In this section, we prove Theorem~\ref{thm:hilbert} and Corollary~\ref{cor:kolchin}. For the remainder of this section, let $A, B \subseteq X$ with $A$ finite.

\subsection{The dimension polynomial}\label{subsec:proof}
In this subsection, we prove Theorem~\ref{thm:hilbert}, and we sketch a slight generalization in Remark~\ref{remark:hilbert2} below. We assume for the remainder of the subsection that each part of the partition $\Phi_i$ is a triangular system. For $\uv \in \N^m$, set 
\[
\Theta_{\uv}\coloneqq\{\phi^{\rv}:\Vert \rv\Vert =\Vert \uv\Vert \text{ and } \rv<_{\lex}\uv \},\qquad f^\Phi_{A|B}(\uv)\coloneqq\rk\!\big(\phi^{\uv}(A)\big|\Theta_{\uv}(A)\Phi^{(\Vert u\Vert)}(B)\big).
\]
Then $f^\Phi_{A|B}$ is bounded above by $|A|$, and $\sum_{\Vert \uv\Vert =\sv}f^\Phi_{A|B}(\uv) =\rk(\Phi^{(\sv)}(A)|\Phi^{(\sv)}(B))$ for each $\sv \in \N^k$.

\begin{lemma}\label{lem:hilbSupward}
The function $f^\Phi_{A|B}$ is decreasing.
\end{lemma}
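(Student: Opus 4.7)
The plan is to reduce to single-step increments in a single coordinate. Since any $\vv \succeq \uv$ in $\N^m$ can be reached from $\uv$ by a sequence of moves of the form $\uv \mapsto \uv + \eev_{i,m}$, it suffices to prove $f^\Phi_{A|B}(\vv) \leq f^\Phi_{A|B}(\uv)$ when $\vv = \uv + \eev_{i,m}$ for some $i \in \{1,\ldots,m\}$. Fix such $\uv, \vv, i$, and let $j \in \{1,\ldots,k\}$ be the block index with $m_{j-1} < i \leq m_j$, so that $\phi_i$ lies in the triangular system $\Phi_j$ and $\Vert\vv\Vert = \Vert\uv\Vert + \eev_{j,k}$.

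The key input will be Lemma~\ref{lem:tritorank} applied to the triangular system $\Phi_j$, with $A' \coloneqq \phi^{\uv}(A)$ and $B' \coloneqq \Theta_{\uv}(A) \cup \Phi^{(\Vert\uv\Vert)}(B)$. This choice gives $\rk(A'|B') = f^\Phi_{A|B}(\uv)$ directly from the definition, and the lemma then produces
\[
\rk\!\big(\phi_i(A') \bigm| \phi_{m_{j-1}+1}(A'B') \cdots \phi_{i-1}(A'B')\phi_i(B')\big) \leq f^\Phi_{A|B}(\uv).
\]
Since $\phi_i(A') = \phi^{\vv}(A)$, monotonicity of $\rk$ will deliver $f^\Phi_{A|B}(\vv) \leq f^\Phi_{A|B}(\uv)$, provided the conditioning set above is contained in $\Theta_{\vv}(A) \cup \Phi^{(\Vert\vv\Vert)}(B)$.

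This last containment breaks into checks of the form ``$\phi_\ell\phi^{\rv}$ acts as $\phi^{\rv + \eev_{\ell,m}}$'' for $\ell \in \{m_{j-1}+1,\ldots,i\}$ and for each relevant exponent $\rv$ (namely $\uv$ itself, elements of $\Theta_\uv$, and exponents of weight $\Vert\uv\Vert$ applied to $B$). The weight identity $\Vert\rv + \eev_{\ell,m}\Vert = \Vert\vv\Vert$ is automatic because $\ell$ and $i$ share the block index $j$, so the $B$-contributions land in $\Phi^{(\Vert\vv\Vert)}(B)$ immediately. For the $A$-contributions the additional requirement $\rv + \eev_{\ell,m} <_{\lex} \vv$ reduces to a single observation: since $<_{\lex}$ emphasizes the last coordinate, adding $\eev_{\ell,m}$ with $\ell \leq i$ to a tuple which is either equal to $\uv$ (with $\ell < i$) or strictly $<_{\lex}$ $\uv$ produces a tuple strictly $<_{\lex}$ $\uv + \eev_{i,m} = \vv$, because no new leading difference can appear past position $i$ and the difference at position $i$ compares $u_i$ or $u_i + 1$ against $u_i + 1$. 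I expect this lex bookkeeping to be the only tedious step, with no serious obstacle; the substantive matroid-theoretic content lives entirely in Lemma~\ref{lem:tritorank}.
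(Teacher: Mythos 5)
Your proposal is correct and follows essentially the same route as the paper: reduce to a unit increment $\vv = \uv + \eev_{i,m}$ in a single block, apply Lemma~\ref{lem:tritorank} to that block's triangular system $\Phi_j$ with $A' = \phi^{\uv}(A)$ and $B' = \Theta_{\uv}(A)\,\Phi^{(\Vert\uv\Vert)}(B)$, then absorb the conditioning set into $\Theta_{\vv}(A)\,\Phi^{(\Vert\vv\Vert)}(B)$ by weight and lexicographic checks. The one small stylistic difference is in the lex bookkeeping for case $\rv \in \Theta_\uv$: the paper factors it cleanly as $\rv + \eev_{\ell,m} <_{\lex} \uv + \eev_{\ell,m} \leq_{\lex} \uv + \eev_{i,m}$ (adding the same unit vector to both sides preserves $<_{\lex}$, then swap $\eev_{\ell,m}$ for $\eev_{i,m}$), whereas your direct coordinate-by-coordinate description is morally right but slightly imprecise when the leading disagreement between $\rv$ and $\uv$ lies strictly above position $i$; the factored route avoids that case analysis.
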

\begin{proof}
Let $\uv' \succeq \uv$ be given. We may assume that 
\[
\uv'=\uv + \eev_{m_i+d,m}
\]
for some $i \in \{1,\ldots,k\}$ and some $d \in \{1,\ldots,d_i\}$. Since $\Phi_i = (\phi_{m_i+ 1},\ldots,\phi_{m_i+d_i})$ is a triangular system, Lemma~\ref{lem:tritorank} tells us that 
\[
\rk\!\big(\phi_{m_i+d}\phi^{\uv}(A)\big|\{\phi_{m_i+j}\phi^{\uv}(A):0<j<d\} \cup\{\phi_{m_i+j}(\Theta_{\uv}(A)\Phi^{(\Vert\uv\Vert)}(B)):0<j\leq d\}\big)\leq f^\Phi_{A|B}(\uv).
\]
For $j\in \{1,\ldots,d-1\}$, we have $\Vert\eev_{m_i+j,m}\Vert =\Vert \eev_{m_i+d,m}\Vert$ and $\eev_{m_i+j,m}<_{\lex} \eev_{m_i+d,m}$, so 
\[
\Vert\eev_{m_i+j,m}+\uv\Vert=\Vert\eev_{m_i+d,m}+\uv\Vert=\Vert \uv'\Vert,\qquad \eev_{m_i+j,m}+\uv <_{\lex} \eev_{m_i+d,m}+\uv=\uv'.
\]
It follows that $\phi_{m_i+j}\phi^{\uv} \in \Theta_{\uv'}$ for $j\in \{1,\ldots,d-1\}$. Likewise, for $\phi^{\rv} \in \Theta_{\uv}$ and $j\in \{1,\ldots,d\}$, we have 
\[
\Vert\eev_{m_i+j,m}+\rv\Vert=\Vert \eev_{m_i+j,m}+\uv\Vert =\Vert \uv'\Vert,\qquad
\eev_{m_i+j,m}+\rv <_{\lex} \eev_{m_i+j,m}+\uv \leq_{\lex} \uv',
\]
so $\phi_{m_i+j}\Theta_{\uv}\subseteq \Theta_{\uv'}$. Finally, we have $\phi_{m_i+j}(\Phi^{(\Vert\uv\Vert)}) \subseteq\Phi^{(\Vert\uv+\eev_{m_i+j,m}\Vert)} = \Phi^{(\Vert\uv'\Vert)}$ for $j\in \{1,\ldots,d\}$, so
\[
f^\Phi_{A|B}(\uv')\leq\rk\!\big(\phi_{m_i+d}\phi^{\uv}(A)\big|\{\phi_{m_i+j}\phi^{\uv}(A):0<j<d\} \cup\{\phi_{m_i+j}(\Theta_{\uv}(A)\Phi^{(\Vert\uv\Vert)}(B)):0<j\leq d\}\big),
\]
as desired.
\end{proof}

\begin{proof}[\bf Proof of Theorem~\ref{thm:hilbert}]
We let $G^\Phi_{A|B}$ denote the generating function of the function $\sv \mapsto \rk(\Phi^{(\sv)}(A)|\Phi^{(\sv)}(B))$. We have
\[
G^\Phi_{A|B}(\Y)=\sum_{\sv \in \N^k}\rk(\Phi^{(\sv)}(A)|\Phi^{(\sv)}(B))\Y^\sv=\sum_{\sv \in \N^k}\sum_{\Vert \uv\Vert =\sv}f^\Phi_{A|B}(\uv)\Y^\sv=\sum_{\uv \in \N^m}f^\Phi_{A|B}(\uv)\Y^{\Vert \uv\Vert}.
\]
The rightmost sum above is just the generating function of $f^\Phi_{A|B}$ with $Y_1$ substituted for the first $d_1$ variables, $Y_2$ substituted for the next $d_2$ variables, and so on. By Proposition~\ref{prop:decreasingimpliesrational} and Lemma~\ref{lem:hilbSupward}, we have that $G^\Phi_{A|B}$ is a rational function with numerator of degree at most $\Vert\mv(f^\Phi_{A|B})\Vert$ and denominator $(1-Y_1)^{d_1}\cdots (1-Y_k)^{d_k}$. By Lemma~\ref{lem:rationalimpliespoly}, we conclude that there is a polynomial $P^\Phi_{A|B}(\Y) \in \Q[\Y]$ of degree at most $(d_1-1,\ldots,d_k-1)$ such that 
\[
\rk(\Phi^{(\sv)}(A)|\Phi^{(\sv)}(B))=P^\Phi_{A|B}(\sv)
\]
for $\sv \in \N^k$ with $\sv \succeq \Vert\mv(f^\Phi_{A|B})\Vert$.
\end{proof}

\begin{remark}\label{remark:hilbert2}
Let $\Psi = (\psi_1,\ldots,\psi_n)$ be another tuple of commuting maps $X \to X$, and let $(\Psi_1,\ldots,\Psi_k)$ be a partition of $\Psi$. Suppose that for each $i \in \{1,\ldots,k\}$, the tuple $\Phi_i$ is a subtuple of $\Psi_i$. One can prove the following generalization of Theorem~\ref{thm:hilbert}:

\smallskip\noindent
\emph{There is a polynomial $P \in \Q[\Y]$ of degree at most $d_i-1$ in each variable $Y_i$ such that 
\[
\rk(\Phi^{(\sv)}(A)|\Psi^{(\sv)}(B))=P(\sv)
\]
for $\sv \in \N^k$ with $\min(\sv)$ sufficiently large.}

\smallskip\noindent
Note that no assumptions beyond commutativity are imposed on the maps in $\Psi_i \setminus \Phi_i$. All that is required is that each tuple $\Phi_i$ is triangular. 
To prove this generalization, replace the function $f^\Phi_{A|B}$ in the above proof with a function $f^{\Phi,\Psi}_{A|B}$, where
\[
f^{\Phi,\Psi}_{A|B}(\uv)\coloneqq\rk\!\big(\phi^{\uv}(A)\big|\Theta_{\uv}(A)\Psi^{(\Vert u\Vert)}(B)\big)
\]
for $\uv \in \N^m$. With the obvious changes, the proof of Lemma~\ref{lem:hilbSupward} shows that $f^{\Phi,\Psi}_{A|B}$ is decreasing.
\end{remark}

\subsection{The cumulative dimension polynomial}
In this subsection, we prove Corollary~\ref{cor:kolchin}. We also investigate the dominant terms of the cumulative dimension polynomial. We assume for the remainder of the subsection that each part of the partition $\Phi_i$ is a quasi-triangular system.

\begin{proof}[\bf Proof of Corollary~\ref{cor:kolchin}]
We recall the augmented system $\Phi_*=\big((\Phi_1)_*,(\Phi_2)_*,\ldots,(\Phi_k)_*\big)$, where $(\Phi_i)_*=(\id,\phi_{m_i+1},\phi_{m_i+2},\ldots,\phi_{m_i+d_i})$ for each $i\in \{1,\ldots,k\}$. Applying Theorem~\ref{thm:hilbert} with $\Phi_*$ in place of $\Phi$, we get a polynomial $P_{A|B}^{\Phi_*}(\Y)\in \Q[\Y]$ of degree at most $d_i$ in each variable $Y_i$ such that
\[
\rk(\Phi^{\preceq(\sv)}(A)|\Phi^{\preceq(\sv)}(B))=\rk(\Phi_*^{(\sv)}(A)|\Phi_*^{(\sv)}(B))=P^{\Phi_*}_{A|B}(\sv)
\]
for $\sv \in \N^k$ with $\min(\sv)$ sufficiently large. Take $Q^\Phi_{A|B} \coloneqq P_{A|B}^{\Phi_*}$.
\end{proof}

Now we turn to the dominant terms. Given a polynomial 
\[
Q(\Y)=\sum_{\ev\in \N^k} a_{\ev}\Y^\ev \in \Q[\Y],
\]
the \textbf{support of $Q$} is the set $\supp(Q) \coloneqq \{\ev:a_{\ev} \neq 0\} \subseteq \N^k$. We extend the partial order $\preceq$ on $\N^k$ to all of $\R^k$, so for $\x,\y \in \R^k$, we have $\x\preceq \y\Longleftrightarrow x_i\leq y_i$ for $i = 1,\ldots,k$. We set
\[
\cM(Q) \coloneqq \big\{\y \in \R^k:\y\preceq \ev\text{ for some }\ev \in \supp(Q)\big\}.
\]
That is, $\cM(Q)$ is the Minkowski sum of the orthant $(-\infty,0]^k$ and the Newton polytope of $Q$ (the convex hull of the support of $Q$). We define the \textbf{dominant terms of $Q$} to be the terms $a_{\dv}\Y^\dv$ such that $\dv$ is a vertex of $\cM(Q)$.

As is exposited in~\cite[Section 3]{HS14}, the dominant terms of $Q$ can be recovered using limits:\  the term $a_{\dv}\Y^\dv$ is a dominant term of $Q$ if and only if
\begin{equation}\label{eq:dominantterms}
\lim\limits_{t\to \infty}\frac{Q(t^{w_1},t^{w_2},\ldots,t^{w_k})}{a_{\dv}t^{\dv \cdot \w}}=1.
\end{equation}
for some positive vector $\w \in \R^k$ with positive $\Q$-linearly independent entries. Indeed, for such a vector $\w$, the map $\ev \mapsto \ev \cdot \w\colon \N^k\to \R$ is injective, and equation (\ref{eq:dominantterms}) can be readily verified for $\dv \in \supp(Q)$ with $\dv\cdot \w = \max\{\ev\cdot \w:\ev \in \supp(Q)\}$.  It remains to note that $\dv \in \supp(Q)$ is a vertex of $\cM(Q)$ if and only if the sector 
\[
\big\{\w \in (\R^{>0})^k: (\dv-\ev)\cdot\w>0 \text{ for all }\ev \in \supp(Q)\setminus\{\dv\}\big\}
\]
is nonempty. The vectors $\w$ in this sector are exactly those vectors for which (\ref{eq:dominantterms}) holds.

In the case that $B = \Theta(B)$, the dominant terms of $Q^\Phi_{A|B}$ only depend on $\cl(\Theta(A)B)$:

\begin{proposition}\label{prop:invariant}
Let $A,A'$ be finite subsets of $X$, let $B \subseteq X$ with $\Theta(B) = B$, and suppose that $\cl(\Theta(A)B) =\cl(\Theta(A')B)$. Then $Q^\Phi_{A|B}$ and $Q^\Phi_{A'|B}$ have the same dominant terms.
\end{proposition}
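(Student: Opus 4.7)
The plan is to establish a two-sided asymptotic sandwich between $Q^\Phi_{A|B}$ and $Q^\Phi_{A'|B}$ and then read off the equality of dominant terms using the iterated-limit characterization~\eqref{eq:dominantterms}. First, I would invoke finite character: since $A'$ is finite and $A'\subseteq\cl(\Theta(A)B)$, there exist $\nv_0\in\N^k$ and a finite $B_0\subseteq B$ with $A'\subseteq\cl(\Phi^{\preceq(\nv_0)}(A)\cup B_0)\subseteq\cl(\Phi^{\preceq(\nv_0)}(A)\cup B)$. Swapping $A$ and $A'$ gives some $\nv_0'\in\N^k$ with $A\subseteq\cl(\Phi^{\preceq(\nv_0')}(A')\cup B)$.

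Next, I would prove the following closure-propagation claim: if each $\Phi_i$ is quasi-triangular, then for every $a\in\cl(C)$ and every $\rv\in\N^m$, one has $\phi^{\rv}a\in\cl(\Phi^{\preceq\Vert\rv\Vert}(C))$. The argument is an induction on $|\rv|$: write $\phi^{\rv}=\phi_j\phi^{\rv-\eev_{j,m}}$ with $j$ in block $i$, apply the inductive hypothesis to $\phi^{\rv-\eev_{j,m}}a$, and then invoke the triangularity of $(\Phi_i)_*$ (which gives $\phi_j(\cl(D))\subseteq\cl(\Phi^{\preceq\eev_{i,k}}(D))$ for arbitrary $D$) to extend by a single $\eev_{i,k}$-step. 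Applied elementwise to $A'$ with $C\coloneqq\Phi^{\preceq(\nv_0)}(A)\cup B$, together with commutativity of the $\phi_j$ and the identity $\Phi^{\preceq(\sv)}(B)=B$ (from $\Theta(B)=B$ and $\id\in\Phi^{\preceq(\sv)}$), this yields
\[
\Phi^{\preceq(\sv)}(A')\subseteq\cl\!\big(\Phi^{\preceq(\sv+\nv_0)}(A)\cup B\big),
\]
and hence $Q^\Phi_{A'|B}(\sv)\leq Q^\Phi_{A|B}(\sv+\nv_0)$ for all sufficiently large $\sv$. The symmetric inequality $Q^\Phi_{A|B}(\sv)\leq Q^\Phi_{A'|B}(\sv+\nv_0')$ is obtained the same way.

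Setting $\nv_1\coloneqq\nv_0+\nv_0'$, these combine into the sandwich $Q^\Phi_{A|B}(\sv)\leq Q^\Phi_{A'|B}(\sv+\nv_0')\leq Q^\Phi_{A|B}(\sv+\nv_1)$. A binomial expansion shows that for any polynomial $Q$ and fixed $\nv$, every new monomial $\sv^\fv$ in $Q(\sv+\nv)-Q(\sv)$ satisfies $\fv\prec\ev$ (strictly in the product order) for some exponent $\ev$ of $Q$, and hence $\fv<_\sigma\dv_\sigma$ for every permutation $\sigma$, where $\dv_\sigma$ denotes the dominant exponent of $Q$ for $\sigma$; in particular shifting a polynomial preserves each of its dominant terms. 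Fixing $\sigma$, letting $a_{\dv_\sigma}$ be the dominant coefficient of $Q^\Phi_{A|B}$ for $\sigma$, and dividing the sandwich by $\sv^{\dv_\sigma}$, both outer iterated $\sigma$-limits equal $a_{\dv_\sigma}$, so the middle does too, and equivalently $\lim_\sigma Q^\Phi_{A'|B}(\sv)/\sv^{\dv_\sigma}=a_{\dv_\sigma}$. A short case analysis (taking inner limits one variable at a time) shows that $\lim_\sigma Q^\Phi_{A'|B}(\sv)/\sv^\dv$ equals $+\infty$, $0$, or the $\sv^\dv$-coefficient of $Q^\Phi_{A'|B}$ according as $\dv<_\sigma$, $>_\sigma$, or $=$ the dominant exponent $\dv'_\sigma$ of $Q^\Phi_{A'|B}$ for $\sigma$; finiteness and nonvanishing of the limit therefore force $\dv'_\sigma=\dv_\sigma$ with matching coefficient. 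Since $\sigma$ was arbitrary, the two polynomials share all dominant terms. The main obstacle is the closure-propagation claim: squeezing from the hypothesis that each $\Phi_i$ is quasi-triangular the fact that closure propagates along products of the $\phi_j$ with the grading given by $\Vert\cdot\Vert$; once that is established, the rest is a routine sandwich argument and asymptotic analysis of~\eqref{eq:dominantterms}.
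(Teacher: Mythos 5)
Your argument is the same as the paper's: use finite character to find $\nv_0$ with $A'\subseteq\cl(\Phi^{\preceq(\nv_0)}(A)B)$ and symmetrically, propagate this by a ``routine induction'' on the height exploiting quasi-triangularity to get $\Phi^{\preceq(\sv)}(A')\subseteq\cl(\Phi^{\preceq(\sv+\nv_0)}(A)B)$, deduce the two-sided polynomial sandwich for $\min(\sv)$ large, and conclude via the iterated-limit characterization~\eqref{eq:dominantterms} of dominant terms. You have merely spelled out in more detail the two steps the paper labels ``routine'' (the closure-propagation induction and the binomial/limit analysis of how shifting a polynomial preserves its dominant terms), both of which are correct.
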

\begin{proof}
Take $\sv_0$ with 
\[
A'\subseteq\cl(\Phi^{\preceq (\sv_0)}(A)B),\qquad A\subseteq\cl(\Phi^{\preceq(\sv_0)}(A')B).
\]
Since each $\Phi_i$ is quasi-triangular, a routine induction on $|\sv| = s_1+\cdots+s_k$ gives
\[
\Phi^{\preceq(\sv)}(A')\subseteq\cl(\Phi^{\preceq (\sv_0+\sv)}(A)B),\qquad\Phi^{\preceq(\sv)}(A)\subseteq\cl(\Phi^{\preceq (\sv_0+\sv)}(A')B),
\]
for all $\sv \in \N^k$. It follows that 
\[
\rk(\Phi^{\preceq(\sv)}(A')|B)\leq\rk(\Phi^{\preceq (\sv_0+\sv)}(A)|B),\qquad \rk(\Phi^{\preceq(\sv)}(A)|B)\leq\rk(\Phi^{\preceq (\sv_0+\sv)}(A')|B)
\]
for each $\sv$. Taking $\min(\sv)$ to be sufficiently large, we get 
\[
Q^\Phi_{A'|B}(\sv)\leq Q^\Phi_{A|B}(\sv_0+\sv),\qquad Q^\Phi_{A|B}(\sv)\leq Q^\Phi_{A'|B}(\sv_0+\sv).
\]
The proposition follows easily, using that the limits (\ref{eq:dominantterms}) agree for $Q^\Phi_{A|B}$ and $Q^\Phi_{A'|B}$.
\end{proof}

\section{The $\Phi$-closure and $\Phi_*$-closure operators}\label{sec:phiclosure}
\noindent
As in the previous section, we fix subsets $A, B \subseteq X$ with $A$ finite.
Recall that the \textbf{$\Phi$-closure} operator is given by
\[
a \in \cl^\Phi(B) :\Longleftrightarrow \rk(\Phi^{(\sv)}(a)|\Phi^{(\sv)}(B))<|\Phi^{(\sv)}|\text{ for some }\sv \in \N^k.
\]
The \textbf{$\Phi_*$-closure operator} is defined identically, but with $\Phi_*$ in place of $\Phi$, so 
\[
a \in \cl^{\Phi_*}(B) \Longleftrightarrow \rk(\Phi^{\preceq(\sv)}(a)|\Phi^{\preceq(\sv)}(B))<|\Phi^{\preceq(\sv)}|\text{ for some }\sv \in \N^k.
\]
In this section, we prove Theorem~\ref{thm:phiclosure}. We also examine whether the rank associated to these operators depends on our choice of partition, and we discuss an extension of the $\Phi_*$-closure operator to more general monoid actions by matroid endomorphisms. In order to prove Theorem~\ref{thm:phiclosure}, we need the following lemma.

\begin{lemma}\label{lem:cldegree}
Suppose that each $\Phi_i$ is a triangular system. Then for $a \in X$, we have
\[
\lim\limits_{\min(\sv)\to \infty}\frac{\rk(\Phi^{(\sv)}(a)|\Phi^{(\sv)}(B))}{|\Phi^{(\sv)}|}=\left\{
\begin{array}{ll}
0 & \mbox{if $a \in \cl^\Phi(B)$}\\
1 & \mbox{if $a \not\in \cl^\Phi(B)$}.
\end{array}\right.
\]
\end{lemma}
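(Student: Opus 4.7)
The plan is to split into two cases according to whether or not $a\in\cl^\Phi(B)$. When $a\notin\cl^\Phi(B)$, the defining property of $\cl^\Phi$ yields $\rk(\Phi^{(\sv)}(a)|\Phi^{(\sv)}(B))=|\Phi^{(\sv)}|$ for every $\sv\in\N^k$, so the ratio is identically $1$ and the limit is $1$.

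For the case $a\in\cl^\Phi(B)$, I would invoke the decomposition from the proof of Theorem~\ref{thm:hilbert}, namely
\[
\rk\!\big(\Phi^{(\sv)}(a)\big|\Phi^{(\sv)}(B)\big)=\sum_{\Vert\uv\Vert=\sv}f^\Phi_{a|B}(\uv),\qquad f^\Phi_{a|B}(\uv)=\rk\!\big(\phi^{\uv}(a)\big|\Theta_{\uv}(a)\Phi^{(\Vert\uv\Vert)}(B)\big).
\]
The crucial observation is that $\phi^{\uv}(a)$ is a single element, so each $f^\Phi_{a|B}(\uv)$ takes values in $\{0,1\}$. The hypothesis $a\in\cl^\Phi(B)$ furnishes some $\sv_0\in\N^k$ with $\rk(\Phi^{(\sv_0)}(a)|\Phi^{(\sv_0)}(B))<|\Phi^{(\sv_0)}|$, and by the $\{0,1\}$-valuedness there must exist $\uv_0\in\N^m$ with $\Vert\uv_0\Vert=\sv_0$ and $f^\Phi_{a|B}(\uv_0)=0$. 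Since $f^\Phi_{a|B}$ is decreasing by Lemma~\ref{lem:hilbSupward}, this forces $f^\Phi_{a|B}(\uv)=0$ for every $\uv\succeq\uv_0$.

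A direct count finishes the job. For $\sv\succeq\sv_0$, the translation $\uv\mapsto\uv-\uv_0$ gives a bijection between $\{\uv\in\N^m:\Vert\uv\Vert=\sv,\ \uv\succeq\uv_0\}$ and $\{\vv\in\N^m:\Vert\vv\Vert=\sv-\sv_0\}$, so
\[
\rk\!\big(\Phi^{(\sv)}(a)\big|\Phi^{(\sv)}(B)\big)\leq|\Phi^{(\sv)}|-|\Phi^{(\sv-\sv_0)}|.
\]
Dividing by $|\Phi^{(\sv)}|=\prod_{i=1}^{k}\binom{s_i+d_i-1}{d_i-1}$, the quotient $|\Phi^{(\sv-\sv_0)}|/|\Phi^{(\sv)}|$ is a product of $k$ rational expressions, each tending to $1$ as the corresponding $s_i\to\infty$. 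Hence the ratio in question tends to $0$ as $\min(\sv)\to\infty$, and being nonnegative, the limit is $0$.

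I expect no serious obstacle here: the conceptual key is that the $\{0,1\}$-valuedness of $f^\Phi_{a|B}$ combined with its monotonicity promotes a single instance of dependence at $\sv_0$ into an asymptotically dense upward-closed region on which $f^\Phi_{a|B}$ vanishes, and this region is large enough to dominate the count $|\Phi^{(\sv)}|$ in the limit. The only mild technical point is restricting to $\sv\succeq\sv_0$ to apply the bijection, which is automatic once $\min(\sv)$ is large enough.
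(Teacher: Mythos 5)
Your proposal is correct and follows essentially the same route as the paper: the same $\{0,1\}$-bounded decomposition via $f^\Phi_{a|B}$, the same use of Lemma~\ref{lem:hilbSupward} to propagate a single vanishing value $f^\Phi_{a|B}(\uv_0)=0$ to the upward cone $\uv\succeq\uv_0$, and the same count $|\Phi^{(\sv)}|-|\Phi^{(\sv-\sv_0)}|$ via the translation bijection. No gaps.
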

\begin{proof}
If $a \not\in \cl^\Phi(B)$, then $\rk(\Phi^{(\sv)}(a)|\Phi^{(\sv)}(B))=|\Phi^{(\sv)}|$ for all $\sv \in \N^k$, so 
\[
\lim\limits_{\min(\sv)\to \infty}\frac{\rk(\Phi^{(\sv)}(a)|\Phi^{(\sv)}(B))}{|\Phi^{(\sv)}|}=1.
\]
Suppose $a$ belongs to $\cl^\Phi(B)$, as witnessed by $\sv_0 \in \N^k$. Then we have
\[
\sum_{\Vert\uv\Vert = \sv_0}f^\Phi_{a|B}(\uv)=\rk(\Phi^{(\sv_0)}(a)|\Phi^{(\sv_0)}(B))<|\Phi^{(\sv_0)}|.
\]
It follows that $f^\Phi_{a|B}(\uv_0) = 0$ for some $\uv_0 \in \N^m$ with $\Vert \uv_0 \Vert = \sv_0$. By Lemma~\ref{lem:hilbSupward}, we have $f^\Phi_{a|B}(\uv) = 0$ whenever $\uv \succeq \uv_0$. Let $\sv \in\N^k$ with $\sv \succeq \sv_0$. Then
\[
\rk(\Phi^{(\sv)}(a)|\Phi^{(\sv)}(B))=\sum_{\Vert \uv\Vert = \sv}f^\Phi_{a|B}(\uv)=\sum_{\substack{\Vert \uv\Vert = \sv\\ \uv \not\succeq\uv_0}}f^\Phi_{a|B}(\uv)\leq|\Phi^{(\sv)}|-|\Phi^{(\sv-\sv_0)}|.
\]
Since $\frac{|\Phi^{(\sv-\sv_0)}|}{|\Phi^{(\sv)}|}$ approaches $1$ as $\min(\sv)$ grows, we have
\[
\lim\limits_{\min(\sv)\to \infty}\frac{\rk(\Phi^{(\sv)}(a)|\Phi^{(\sv)}(B))}{|\Phi^{(\sv)}|}=0.\qedhere
\]
\end{proof}

\begin{proof}[\bf Proof of Theorem~\ref{thm:phiclosure}]
Let us first assume that each $\Phi_i$ is a triangular system. We need to show that $(X,\cl^\Phi)$ is a finitary matroid. Monotonicity and finite character are both clear. For idempotence, let $a \in \cl^\Phi(\cl^\Phi(B))$ and, using finite character, take elements $b_1,\ldots,b_n \in \cl^\Phi(B)$ with $a \in \cl^\Phi(b_1,\ldots,b_n)$. We have
\begin{align*}
\rk(\Phi^{(\sv)}(a)|\Phi^{(\sv)}(B)) &\leq \rk(\Phi^{(\sv)}(a)|\Phi^{(\sv)}(b_1,\ldots,b_n)) + \rk(\Phi^{(\sv)}(b_1,\ldots,b_n)|\Phi^{(\sv)}(B))\\
&\leq \rk(\Phi^{(\sv)}(a)|\Phi^{(\sv)}(b_1,\ldots,b_n)) + \sum_{i=1}^n\rk(\Phi^{(\sv)}(b_i)|\Phi^{(\sv)}(B)).
\end{align*}
It follows that
\[
\lim\limits_{\min(\sv)\to \infty}\frac{\rk(\Phi^{(\sv)}(a)|\Phi^{(\sv)}(B))}{|\Phi^{(\sv)}|}\leq\lim\limits_{\min(\sv)\to \infty}\frac{\rk(\Phi^{(\sv)}(a)|\Phi^{(\sv)}(b_1,\ldots,b_n)) + \sum_{i=1}^n\rk(\Phi^{(\sv)}(b_i)|\Phi^{(\sv)}(B))}{|\Phi^{(\sv)}|}.
\]
We conclude by Lemma~\ref{lem:cldegree} that $a \in \cl^\Phi(B)$. Finally, for exchange, suppose that $a\in \cl^\Phi(Bb)\setminus \cl^\Phi(B)$. Since $\cl^\Phi$ has finite character, we may assume that $B$ is finite. Idempotence tells us that $b \not\in \cl^\Phi(B)$, so 
\[
\rk(\Phi^{(\sv)}(a)|\Phi^{(\sv)}(B))=|\Phi^{(\sv)}|=\rk(\Phi^{(\sv)}(b)|\Phi^{(\sv)}(B))
\]
for all $\sv$. It follows that 
\begin{align*}
\rk(\Phi^{(\sv)}(b)|\Phi^{(\sv)}(Ba)) &= \rk(\Phi^{(\sv)}(Bab))-\rk(\Phi^{(\sv)}(Ba))\\
&=\rk(\Phi^{(\sv)}(Bab))-\rk(\Phi^{(\sv)}(a)|\Phi^{(\sv)}(B))-\rk(\Phi^{(\sv)}(B))\\
&= \rk(\Phi^{(\sv)}(Bab))-\rk(\Phi^{(\sv)}(b)|\Phi^{(\sv)}(B))-\rk(\Phi^{(\sv)}(B))=\rk(\Phi^{(\sv)}(a)|\Phi^{(\sv)}(Bb)).
\end{align*}
Since $a \in \cl^\Phi(Bb)$, we see that $b \in \cl^\Phi(Ba)$.

Now we turn to the properties of the rank function $\rk^\Phi$ associated to the matroid $(X,\cl^\Phi)$. Let us show that 
\begin{equation}\label{eq:limitrank}
\lim\limits_{\min(\sv)\to \infty}\frac{\rk(\Phi^{(\sv)}(A)|\Phi^{(\sv)}(B))}{|\Phi^{(\sv)}|}=\rk^\Phi(A|B).
\end{equation}
We prove this by induction on $|A|$, with the case $A = \emptyset$ holding trivially. Suppose that this holds for a given $A$, and let $a \in X \setminus A$. We have
\[
\lim\limits_{\min(\sv)\to \infty}\frac{\rk(\Phi^{(\sv)}(Aa)|\Phi^{(\sv)}(B))}{|\Phi^{(\sv)}|}=\lim\limits_{\min(\sv)\to \infty}\frac{\rk(\Phi^{(\sv)}(A)|\Phi^{(\sv)}(B))+\rk(\Phi^{(\sv)}(a)|\Phi^{(\sv)}(AB))}{|\Phi^{(\sv)}|},
\]
and our induction hypothesis and Lemma~\ref{lem:cldegree} gives
\[
\lim\limits_{\min(\sv)\to \infty}\frac{\rk(\Phi^{(\sv)}(A)|\Phi^{(\sv)}(B))+\rk(\Phi^{(\sv)}(a)|\Phi^{(\sv)}(AB))}{|\Phi^{(\sv)}|}=\rk^\Phi(A|B)+\rk^\Phi(a|AB)=\rk^\Phi(Aa|B).
\]
Finally, we will show that
\[
P^\Phi_{A|B}(\Y)=\frac{\rk^\Phi(A|B)}{(d_1-1)!\cdots (d_k-1)!}Y_1^{d_1-1}\cdots Y_k^{d_k-1}+\text{lower degree terms}.
\]
As $\min(\sv)$ grows, we have
\[
P^\Phi_{A|B}(\sv)=\rk(\Phi^{(\sv)}(A)|\Phi^{(\sv)}(B)),\qquad |\Phi^{(\sv)}|=\frac{s_1^{d_1-1}\cdots s_k^{d_k-1}}{(d_1-1)!\cdots (d_k-1)!}+ o(s_1^{d_1-1}\cdots s_k^{d_k-1}),
\]
so the leading coefficient of $P^\Phi_{A|B}$ is equal to the limit
\[
\lim\limits_{\min(\sv)\to \infty}\frac{P^\Phi_{A|B}(\sv)}{s_1^{d_1-1}\cdots s_k^{d_k-1}}=\lim\limits_{\min(\sv)\to \infty}\frac{\rk(\Phi^{(\sv)}(A)|\Phi^{(\sv)}(B))}{(d_1-1)!\cdots (d_k-1)!|\Phi^{(\sv)}|}=\frac{\rk^\Phi(A|B)}{(d_1-1)!\cdots (d_k-1)!},
\]
where the last equality follows from (\ref{eq:limitrank}).

The second part of Theorem~\ref{thm:phiclosure}, involving quasi-triangular systems and the $\Phi_*$-closure, follows from the first part with $\Phi_*$ in place of $\Phi$.
\end{proof}

\begin{remark}
By Theorem~\ref{thm:phiclosure} and Lemma~\ref{lem:rationalimpliespoly}, the $\Phi$-rank $\rk^\Phi(A|B)$ coincides with the numerator of $G^\Phi_{A|B}(\Y)$, evaluated at the tuple $(1,\ldots,1)$.
\end{remark}

\subsection{Dependence on our choice of partition}
The $\Phi$-closure operator is dependent on our partition $(\Phi_1,\ldots,\Phi_k)$, as the following example illustrates:

\begin{example}
Let $(X,\cl)$ be the set $\Z$ with the trivial closure operator, so the rank of any subset of $\Z$ is its cardinality. Let $a \in \Z$, let $\phi_1$ be the map $x\mapsto x+1$, let $\phi_2 = \phi_1$, and let $\Phi = (\phi_1,\phi_2)$. First, suppose $\Phi$ is partitioned trivially (so $k = 1$). Then $\Phi^{(t)}(a) =\{a+t\}$ for any $t \in \N$, so $\rk(\Phi^{(t)}(a))=1<|\Phi^{(t)}|$ for $t>0$ and $\rk^\Phi(a) = 0$. Now, suppose $\Phi$ is given the partition $(\Phi_1,\Phi_2)$, where $\Phi_1 = (\phi_1)$ and $\Phi_2 = (\phi_2)$. Then $\Phi^{(s_1,s_2)}(a)= \{a+s_1+s_2\}$ for $s_1,s_2 \in \N$, so $\rk(\Phi^{(s_1,s_2)}(a))=1=|\Phi^{(s_1,s_2)}|$ and $\rk^\Phi(a) = 1$.
\end{example}

On the other hand, the $\Phi_*$-closure operator is more robust, as it does not depend on our partition:

\begin{proposition}\label{prop:indeppart}
For $a \in X$, we have $a\in \cl^{\Phi_*}(B)$ if and only if $(\theta a)_{\theta \in \Theta}$ is not $\cl$-independent over $\Theta(B)$.
\end{proposition}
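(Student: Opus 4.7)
My plan is to prove the two directions separately, in each case unpacking the definition of $\cl^{\Phi_*}$ and using the finite character of $\cl$ together with the fact that $\Theta = \bigcup_{\sv \in \N^k} \Phi^{\preceq(\sv)}$. I interpret $\cl$-independence of the family $(\theta a)_{\theta \in \Theta}$ over $\Theta(B)$ in the natural way, namely that for every $\theta_0 \in \Theta$, $\theta_0 a \notin \cl\!\big(\Theta(B) \cup \{\theta a : \theta \in \Theta, \theta \neq \theta_0\}\big)$, where $\{\theta a : \theta \neq \theta_0\}$ is read as a subset of $X$.

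For the forward direction, fix $\sv \in \N^k$ witnessing $a \in \cl^{\Phi_*}(B)$, so $\rk\!\big(\Phi^{\preceq(\sv)}(a)\,\big|\,\Phi^{\preceq(\sv)}(B)\big) < |\Phi^{\preceq(\sv)}|$. Since the rank of a subset of $X$ never exceeds its cardinality, either the orbit map $\theta \mapsto \theta a$ collapses on $\Phi^{\preceq(\sv)}$ (giving two distinct operators $\theta_0 \neq \theta_1$ with $\theta_0 a = \theta_1 a$, so $\theta_0 a$ already lies in $\{\theta a : \theta \neq \theta_0\}$), or the map is injective there but the set $\Phi^{\preceq(\sv)}(a)$ is properly $\cl$-dependent over $\Phi^{\preceq(\sv)}(B)$, yielding some $\theta_0 \in \Phi^{\preceq(\sv)}$ with $\theta_0 a \in \cl\!\big(\Phi^{\preceq(\sv)}(B) \cup (\Phi^{\preceq(\sv)}(a) \setminus \{\theta_0 a\})\big)$. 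In both cases, enlarging the closing set to $\Theta(B) \cup \{\theta a : \theta \neq \theta_0\}$ gives a failure of $\cl$-independence of the family.

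For the backward direction, fix $\theta_0 \in \Theta$ with $\theta_0 a \in \cl\!\big(\Theta(B) \cup \{\theta a : \theta \neq \theta_0\}\big)$ and apply finite character to obtain a finite witness set $F$ contained in $\Theta(B) \cup \{\theta a : \theta \neq \theta_0\}$. Choose $\sv \in \N^k$ large enough that $\theta_0$, every operator needed to express the finitely many elements of $F \cap \Theta(B)$ as $\theta b$, and every operator $\theta \neq \theta_0$ witnessing the finitely many elements of $F \cap \{\theta a : \theta \neq \theta_0\}$ all lie in $\Phi^{\preceq(\sv)}$. Then $F \subseteq \Phi^{\preceq(\sv)}(B) \cup (\Phi^{\preceq(\sv)}(a) \setminus \{\theta_0 a\})$, so $\theta_0 a$ is in the $\cl$-closure of this union, which forces $\rk\!\big(\Phi^{\preceq(\sv)}(a)\,\big|\,\Phi^{\preceq(\sv)}(B)\big) \leq |\Phi^{\preceq(\sv)}(a)| - 1 \leq |\Phi^{\preceq(\sv)}| - 1$, hence $a \in \cl^{\Phi_*}(B)$. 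The only real obstacle is the bookkeeping distinction between the indexed family $(\theta a)_{\theta \in \Theta}$ and its underlying set (which may have fewer elements when the orbit map collapses); once the interpretation above is adopted, this collapse is handled uniformly by case (a) in the forward direction and is harmless in the backward direction.
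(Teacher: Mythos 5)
Your proof takes the same basic route as the paper: unpack the meaning of $\cl$-independence of the indexed family, use finite character of $\cl$, and choose $\sv$ large enough that a finite witness fits inside $\Phi^{\preceq(\sv)}$. Your forward direction, with its explicit case split on whether the orbit map collapses on $\Phi^{\preceq(\sv)}$, is fine (the paper simply labels this direction ``Clearly'').

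There is, however, a small but real gap in your backward direction that the collapse issue creates despite your assertion that it is ``harmless'' there. Your claimed inclusion $F \subseteq \Phi^{\preceq(\sv)}(B) \cup \bigl(\Phi^{\preceq(\sv)}(a) \setminus \{\theta_0 a\}\bigr)$ can fail: if the finite witness set $F$ contains some $\theta_1 a$ with $\theta_1 \neq \theta_0$ but $\theta_1 a = \theta_0 a$, then that element belongs to $F$ yet not to $\Phi^{\preceq(\sv)}(a) \setminus \{\theta_0 a\}$, so the inequality $\rk\bigl(\Phi^{\preceq(\sv)}(a)\bigm|\Phi^{\preceq(\sv)}(B)\bigr) \leq |\Phi^{\preceq(\sv)}(a)| - 1$ is not justified. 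The conclusion still holds---once $\theta_0, \theta_1 \in \Phi^{\preceq(\sv)}$ the map $\theta \mapsto \theta a$ fails to be injective on $\Phi^{\preceq(\sv)}$, so already $\rk\bigl(\Phi^{\preceq(\sv)}(a)\bigm|\Phi^{\preceq(\sv)}(B)\bigr) \leq |\Phi^{\preceq(\sv)}(a)| < |\Phi^{\preceq(\sv)}|$---but you need to run this as a separate case, exactly as you did in the forward direction. The paper sidesteps this bookkeeping entirely by phrasing non-independence as the existence of a finite $\Theta_0 \subseteq \Theta$ with $\rk(\Theta_0(a)\mid\Theta(B)) < |\Theta_0|$, lifting this via finite character to $\rk(\Theta_0(a)\mid\Phi^{\preceq(\sv)}(B)) < |\Theta_0|$ for large enough $\sv$ with $\Theta_0 \subseteq \Phi^{\preceq(\sv)}$, and then invoking subadditivity of rank,
\[
\rk\bigl(\Phi^{\preceq(\sv)}(a)\bigm|\Phi^{\preceq(\sv)}(B)\bigr) \leq \rk\bigl(\Theta_0(a)\bigm|\Phi^{\preceq(\sv)}(B)\bigr) + \rk\bigl(\Theta_1(a)\bigm|\Phi^{\preceq(\sv)}(B)\bigr) < |\Theta_0| + |\Theta_1| = |\Phi^{\preceq(\sv)}|
\]
with $\Theta_1 := \Phi^{\preceq(\sv)} \setminus \Theta_0$; the cardinality arithmetic on the right absorbs any collapse automatically.
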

\begin{proof}
Clearly, if $a \in \cl^{\Phi_*}(B)$, then $(\theta a)_{\theta \in \Theta}$ is not $\cl$-independent over $\Theta(B)$. Suppose that $(\theta a)_{\theta \in \Theta}$ is not $\cl$-independent over $\Theta(B)$, and take a finite subset $\Theta_0 \subseteq \Theta$ with $\rk(\Theta_0(a)|\Theta(B))<|\Theta_0|$. Since $\cl$ has finite character, we can take $\sv \in \N^k$ with $\Theta_0 \subseteq \Phi^{\preceq(\sv)}$ such that $\rk(\Theta_0(a)| \Phi^{\preceq(\sv)}(B))<|\Theta_0|$. Set $\Theta_1\coloneqq \Phi^{\preceq(\sv)} \setminus \Theta_0$, so
\[
\rk(\Phi^{\preceq(\sv)}(a)|\Phi^{\preceq(\sv)}(B))\leq\rk(\Theta_0(a)|\Phi^{\preceq(\sv)}(B))+\rk(\Theta_1(a)|\Phi^{\preceq(\sv)}(B))<|\Theta_0|+|\Theta_1|=|\Phi^{\preceq(\sv)}|.
\]
We conclude that $a \in \cl^{\Phi_*}(B)$.
\end{proof}

\begin{corollary}\label{cor:kolchincoefficient}
Suppose that each $\Phi_i$ is a quasi-triangular system. Then $\rk^{\Phi_*}(A|B)$ is the maximal size of a subset $A_0 \subseteq \cl(\Theta(AB))$ such that $(\theta a)_{\theta \in \Theta,a \in A_0}$ is $\cl$-independent over $\Theta(B)$.
\end{corollary}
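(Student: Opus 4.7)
The plan is to prove both inequalities $\rk^{\Phi_*}(A|B)\le M$ and $M\le \rk^{\Phi_*}(A|B)$, where $M$ denotes the maximum in the statement. The key ingredients are Theorem~\ref{thm:phiclosure} (which makes $(X,\cl^{\Phi_*})$ a finitary matroid, so that $\rk^{\Phi_*}$ is defined), Proposition~\ref{prop:indeppart} (which characterizes $\cl^{\Phi_*}$-dependence via $\cl$-dependence of $\Theta$-orbits), and standard submodularity for the rank function of $\cl$.

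As a preliminary, I would check the inclusion $\cl(\Theta(AB))\subseteq \cl^{\Phi_*}(AB)$. Indeed, for $x\in \cl(\Theta(AB))$, taking $\theta=\id\in\Theta$ shows that the family $(\theta x)_{\theta\in\Theta}$ fails to be $\cl$-independent over $\Theta(AB)$, so Proposition~\ref{prop:indeppart} gives $x\in\cl^{\Phi_*}(AB)$.

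For the bound $M\le \rk^{\Phi_*}(A|B)$, I would fix $A_0\subseteq \cl(\Theta(AB))$ with $(\theta a)_{\theta\in\Theta,\,a\in A_0}$ $\cl$-independent over $\Theta(B)$ and argue that $A_0$ is $\cl^{\Phi_*}$-independent over $B$. For each $a\in A_0$, the matroid fact that a subfamily of a $\cl$-independent family remains $\cl$-independent once the complement is moved into the localizer yields that $(\theta a)_{\theta\in\Theta}$ is $\cl$-independent over $\Theta(B)\cup\Theta(A_0\setminus\{a\})=\Theta(B\cup(A_0\setminus\{a\}))$. Applying Proposition~\ref{prop:indeppart} at $B\cup(A_0\setminus\{a\})$ gives $a\notin\cl^{\Phi_*}(B\cup(A_0\setminus\{a\}))$, so $A_0$ is $\cl^{\Phi_*}$-independent over $B$. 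By the preliminary, $A_0\subseteq\cl^{\Phi_*}(AB)$, so $A_0$ is a $\cl^{\Phi_*}$-independent subset of $\cl^{\Phi_*}(AB)$ over $B$, forcing $|A_0|\le\rk^{\Phi_*}(A|B)$.

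For the reverse bound $\rk^{\Phi_*}(A|B)\le M$, I would pick a $\cl^{\Phi_*}$-basis $A_0'$ for $A$ over $B$; then $A_0'\subseteq A\subseteq\cl(\Theta(AB))$ and $|A_0'|=\rk^{\Phi_*}(A|B)$, and it remains to show that $(\theta a)_{\theta\in\Theta,\,a\in A_0'}$ is $\cl$-independent over $\Theta(B)$. By $\cl^{\Phi_*}$-independence and Proposition~\ref{prop:indeppart}, each $(\theta a_i)_{\theta\in\Theta}$ is $\cl$-independent over $\Theta(B)\cup\bigcup_{j\ne i}\Theta(a_j)$. Enumerating $A_0'=\{a_1,\dots,a_n\}$, I would assemble this into the desired joint independence by induction on $i$ via the chain rule $\rk(S_i\cup T_i)=\rk(T_i)+\rk(S_i\,|\,T_i)$ with $T_i:=\Theta(B)\cup\bigcup_{j<i}\Theta(a_j)$, using that the hypothesis for $a_i$ (independence over the \emph{larger} localizer $\Theta(B)\cup\bigcup_{j\ne i}\Theta(a_j)$) descends to independence over the smaller $T_i$. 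This descent is the only mildly subtle step, but it is immediate from submodularity: $\rk(S\,|\,T')\ge\rk(S\,|\,T)$ whenever $T'\subseteq T$, so $\rk(S\,|\,T)=|S|$ implies $\rk(S\,|\,T')=|S|$. Combining the two inequalities yields the corollary.
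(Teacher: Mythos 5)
Your proof is correct and takes a genuinely different route from the paper's. The paper handles the step of allowing $A_0\subseteq\cl(\Theta(AB))$ (rather than $A_0\subseteq A$) quantitatively: it reduces to the case $B=\Theta(B)$, then invokes the argument of Proposition~\ref{prop:invariant} to produce an $\sv_0$ with $Q^\Phi_{A'|B}(\sv)\le Q^\Phi_{A|B}(\sv_0+\sv)$ eventually, and extracts $\rk^{\Phi_*}(A'|B)\le\rk^{\Phi_*}(A|B)$ from the leading-coefficient formula in Theorem~\ref{thm:phiclosure}. You instead observe the purely matroid-theoretic inclusion $\cl(\Theta(AB))\subseteq\cl^{\Phi_*}(AB)$ (a one-line consequence of Proposition~\ref{prop:indeppart}, since $\id\in\Theta$) and then run both inequalities via standard matroid rank facts: monotonicity in the localizer, additivity $\rk(S\cup T\,|\,C)=\rk(T\,|\,C)+\rk(S\,|\,C\cup T)$, and Proposition~\ref{prop:indeppart} repeatedly. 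Your argument is more elementary in that it never touches the dimension polynomials or the limit formula; the only place quasi-triangularity enters is through Theorem~\ref{thm:phiclosure}, which is needed to know $(X,\cl^{\Phi_*})$ is a matroid in the first place. Two cosmetic remarks: what you call ``submodularity'' (independence descends to smaller localizers) is really monotonicity of relative rank in the second argument, and in the induction the sets $\Theta(a_i)$ are infinite, so the chain-rule display should be applied to arbitrary finite subfamilies (finite character of $\cl$ then yields the joint independence of the full family); neither point affects the substance.
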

\begin{proof}
Proposition~\ref{prop:indeppart} tells us that $\rk^{\Phi_*}(A|B)$ is the maximal size of a subset $A_0 \subseteq A$ such that $(\theta a)_{\theta \in \Theta,a \in A_0}$ is $\cl$-independent over $\Theta(B)$. Let $A'$ be a finite subset of $\cl(\Theta(AB))$ such that $(\theta a)_{\theta \in \Theta,a \in A'}$ is $\cl$-independent over $\Theta(B)$. We will show that $|A'|\leq \rk^{\Phi_*}(A|B)$. Again using Proposition~\ref{prop:indeppart}, we may assume that $B = \Theta(B)$. Arguing as in Proposition~\ref{prop:invariant}, we find $\sv_0 \in \N^k$ such that $Q^\Phi_{A'|B}(\sv)\leq Q^\Phi_{A|B}(\sv_0+\sv)$ for all $\sv$ with $\min(\sv)$ sufficiently large. Then $\rk^{\Phi_*}(A'|B)\leq \rk^{\Phi_*}(A|B)$ by Theorem~\ref{thm:phiclosure}, and it remains to note that $|A'| =\rk^{\Phi_*}(A'|B)$.
\end{proof}

\subsection{Monoid actions by endomorphisms}
Suppose that each $\phi_i$ is an endomorphism, that is, suppose
\[
a \in \cl(B)\Longrightarrow \phi_i a \in \cl(\phi_i B)
\]
for each $i$. Then $\phi^{\rv}$ is an endomorphism for each $\rv \in \N^m$, so $(\rv,a)\mapsto \phi^{\rv}(a)$ gives us a monoid action $\N^m\!\curvearrowright\!(X,\cl)$ by endomorphisms. In this case, each $\Phi_i$ is a triangular system, so $(X,\cl^{\Phi_*})$ is also a finitary matroid and 
\[
\rk^{\Phi_*}(A)=\lim\limits_{\min(\sv)\to \infty}\frac{\rk(\Phi^{\preceq(\sv)}(A))}{|\Phi^{\preceq(\sv)}|}
\]
by Theorem~\ref{thm:phiclosure}. Thus, we can view $\rk^{\Phi_*}(A)$ as an average of $\rk(A)$ over the action $\N^m\!\curvearrowright\!(X,\cl)$, where this averaging is taken with respect to the net $\big(\{\rv \in \N^m:\Vert \rv\Vert \preceq \sv\}\big)_{\sv \in \N^k}$ of subsets of $\N^m$.

\smallskip
While our result on polynomial growth seems limited to the context of $\N^m$ acting on $(X,\cl)$, one can make sense of this ``averaging'' for the right action of any cancellative left-amenable monoid, using the main theorem from~\cite{CSCK14}. Let $M$ be a cancellative left-amenable semigroup and let $\alpha\colon X\times M\to X$ be a right action $(X,\cl)\!\curvearrowleft\!M$ by endomorphisms. Given finite subsets $A\subseteq X$ and $S\subseteq M$, we put $r_A(S)\coloneqq \rk(\alpha(A,S))$, where $\alpha(A,S) = \bigcup_{(a,s) \in A\times S} \alpha(a,s)$. It is routine to check that $r_A$, as a map from finite subsets of $M$ to $\N$, satisfies the three conditions in the statement of~\cite[Theorem 1.1]{CSCK14}; for the second condition, one needs to use that $s$ is a matroid endomorphism and that our action is on the \emph{right}. It follows that there is a number $\rk^M(A)$, depending only on $M$ and $A$, such that
\[
\lim_i\frac{\rk(\alpha(A,F_i))}{|F_i|}=\lim_i\frac{r_A(F_i)}{|F_i|}=\rk^M(A)
\]
for any left-F\o lner net $(F_i)_{i \in I}$ of $M$ (such nets always exist). Then $\rk^M(A)$ serves as this ``averaged'' rank.

\section{Applications I:\ Some classical results}\label{sec:classical}
\noindent
In this section, we give a handful of applications in the case that each $\phi_i$ is an endomorphism of $(X,\cl)$.

\subsection{Polynomial growth of sumsets}
Let $G$ be a commutative semigroup. In~\cite{Kh92}, Khovanskii showed that for finite subsets $A, B\subseteq G$, the size of the sumset $A+tB$ is given by a polynomial in $t$ for $t$ sufficiently large (here, $A+ tB$ is the set of all elements $a+b_1+\cdots+b_t$, where $a \in A$ and each $b_i \in B$). Moreover, the degree of this polynomial is less than the size of $B$. Using Theorem~\ref{thm:hilbert}, we can recover a generalization of Khovanskii's theorem, originally proven by Nathanson~\cite{Na00}.

\begin{corollary}[Nathanson]\label{cor:Nathanson}
Let $A,B_1,\ldots,B_k$ be finite subsets of $G$. Then there is a polynomial $P \in \Q[\Y]$ such that
\[
|A+s_1B_1+\cdots+s_kB_k|=P(\sv)
\]
whenever $\min(\sv)$ is sufficiently large. Moreover, the degree of $P$ in each variable $Y_i$ is less than $|B_i|$.
\end{corollary}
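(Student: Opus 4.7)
The plan is to cast Nathanson's theorem as an instance of Theorem~\ref{thm:hilbert} applied to a particularly simple matroid, namely $X = G$ equipped with the \emph{trivial} closure operator $\cl(S) = S$. For this matroid the rank of a finite set is just its cardinality, so Theorem~\ref{thm:hilbert} will directly output a polynomial formula for the size of the sumset.

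For each $i\in\{1,\ldots,k\}$, enumerate $B_i = \{b_{i,1},\ldots,b_{i,d_i}\}$, where $d_i = |B_i|$. Let $m \coloneqq d_1+\cdots+d_k$ and let $\phi_{i,j}\colon G\to G$ denote translation by $b_{i,j}$, i.e.\ $\phi_{i,j}(x) = x + b_{i,j}$. Concatenating these in the natural way gives a tuple $\Phi$ of $m$ commuting maps on $G$ (they commute because $G$ is commutative), together with the obvious partition $(\Phi_1,\ldots,\Phi_k)$ where $\Phi_i = (\phi_{i,1},\ldots,\phi_{i,d_i})$. Each $\Phi_i$ is trivially a triangular system: if $a\in \cl(B) = B$, then $\phi_{i,j}(a)\in \phi_{i,j}(B)$, so the defining implication holds vacuously.

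Next, I would identify $\Phi^{(\sv)}(A)$ with the sumset $A + s_1B_1 + \cdots + s_kB_k$. An element of $\Phi^{(\sv)}(A)$ has the form $\phi^{\rv}(a) = a + \sum_{i,j} r_{i,j}\,b_{i,j}$ for some $a\in A$ and some $\rv \in \N^m$ with $\Vert \rv\Vert = \sv$, i.e.\ with $\sum_{j} r_{i,j} = s_i$ for each $i$. Grouping by $i$, this is exactly an element of $A + s_1B_1 + \cdots + s_kB_k$, and conversely every such element arises this way. Hence $|A + s_1B_1 + \cdots + s_kB_k| = \rk(\Phi^{(\sv)}(A))$.

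Applying Theorem~\ref{thm:hilbert} with $B = \emptyset$ now produces a polynomial $P \coloneqq P^{\Phi}_{A|\emptyset} \in \Q[Y_1,\ldots,Y_k]$, of degree at most $d_i - 1 = |B_i| - 1$ in each $Y_i$, such that $|A + s_1B_1 + \cdots + s_kB_k| = P(\sv)$ whenever $\min(\sv)$ is sufficiently large. There is no real obstacle here; the only point requiring any care is the bookkeeping that translates multi-indices $\rv$ with $\Vert\rv\Vert = \sv$ into the definition of the iterated sumset, after which the corollary is immediate.
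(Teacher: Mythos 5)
Your proof is correct and is essentially the same as the paper's: both take $X = G$ with the trivial closure operator, take $\Phi$ to be translations by elements of the $B_i$'s (suitably partitioned), observe that $\Phi^{(\sv)}(A) = A + s_1B_1 + \cdots + s_kB_k$, and invoke Theorem~\ref{thm:hilbert} with $B = \emptyset$. One tiny quibble: the triangular condition does not hold \emph{vacuously} for the trivial matroid --- the hypothesis $a\in\cl(B)=B$ is frequently satisfied --- rather, it holds because the conclusion $\phi_{i,j}(a)\in\phi_{i,j}(B)$ is immediate whenever $a\in B$; equivalently, each $\phi_{i,j}$ is an endomorphism of the trivial matroid, which is how the paper phrases it.
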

\begin{proof}
Let $(X,\cl)$ be the underlying set of the semigroup $G$ with the trivial closure operator, so the rank of any subset of $G$ is its cardinality. Let $b_1,\ldots,b_{m_1}$ be an enumeration of $B_1$, let $b_{m_1+1},\ldots,b_{m_2}$ be an enumeration of $B_2$, and so on. For each $i \in \{1,\ldots,m\}$, let $\phi_i\colon G \to G$ be the map $x \mapsto x+b_i$, so each $\phi_i$ is an endomorphism. Then
\[
\Phi^{(\sv)}(A)=A+s_1B_1+\cdots+s_kB_k
\]
for each $\sv= (s_1,\ldots,s_k) \in \N^k$, and it remains to invoke Theorem~\ref{thm:hilbert}.
\end{proof}

The set $B$ in Theorem~\ref{thm:hilbert} makes no appearance in the argument above. This suggests a slight improvement:

\begin{corollary}\label{cor:avoidB}
Let $A, B_1,\ldots, B_k$ be finite subsets of $G$ and let $B$ be an arbitrary subset of $G$. Then there is a polynomial $P \in \Q[\Y]$ such that
\[
|(A+s_1B_1+\cdots+s_kB_k)\setminus (B+s_1B_1+\cdots+s_kB_k)|=P(\sv)
\]
whenever $\min(\sv)$ is sufficiently large.
\end{corollary}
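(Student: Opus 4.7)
The plan is to reproduce the argument of Corollary~\ref{cor:Nathanson} essentially verbatim, taking advantage of the fact that Theorem~\ref{thm:hilbert} imposes no finiteness hypothesis on the set $B$. First, I would take $(X,\cl)$ to be the underlying set of $G$ equipped with the trivial closure operator, so that $\rk$ coincides with cardinality and, for any $S,T\subseteq G$ with $S$ finite, $\rk(S|T) = |S\setminus T|$.

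Next, following the Nathanson setup, I enumerate the elements of $B_1,\ldots,B_k$ consecutively as $b_1,\ldots,b_m$ and let $\phi_i\colon G\to G$ be translation by $b_i$; these maps commute and are trivially endomorphisms of $(X,\cl)$, so the partition $(\Phi_1,\ldots,\Phi_k)$ inherited from the block structure of the enumeration has each $\Phi_i$ a triangular system. Exactly as in Corollary~\ref{cor:Nathanson}, one checks that
\[
\Phi^{(\sv)}(A) = A + s_1B_1 + \cdots + s_kB_k \quad\text{and}\quad \Phi^{(\sv)}(B) = B + s_1B_1 + \cdots + s_kB_k.
\]

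Applying Theorem~\ref{thm:hilbert} to this finite $A$ and the arbitrary $B$ then gives a polynomial $P\in \Q[\Y]$ with
\[
\big|(A + s_1B_1 + \cdots + s_kB_k)\setminus(B + s_1B_1 + \cdots + s_kB_k)\big| = \rk\!\big(\Phi^{(\sv)}(A)\,\big|\,\Phi^{(\sv)}(B)\big) = P(\sv)
\]
for all $\sv\in\N^k$ with $\min(\sv)$ sufficiently large, which is the desired conclusion. There is no real obstacle here: nothing in the Nathanson-style argument needs $B$ finite, and the set difference on the left is automatically finite because $A$ is, so the identification of cardinality with relative rank remains valid.
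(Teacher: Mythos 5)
Your proof is correct and is exactly what the paper intends: the paper explicitly introduces Corollary~\ref{cor:avoidB} with the remark that $B$ played no role in the proof of Corollary~\ref{cor:Nathanson}, so one can take any $B\subseteq G$, use the same trivial matroid and translation maps, and note that for the trivial closure $\rk(S|T)=|S\setminus T|$ for finite $S$; Theorem~\ref{thm:hilbert} then yields the polynomial.
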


\subsection{Counting elements in an ideal}
In this next application, we make use of the fact that if $\phi$ is an endomorphism of $(X,\cl)$ and $\phi(C) \subseteq C$ for some $C \subseteq X$, then $\phi$ is also an endomorphism of $(X,\cl_C)$.

\begin{corollary}\label{cor:ideal}
Let $I$ be an ideal of $\N^m$, that is, a $\preceq$-downward closed subset of $\N^m$. For each $\sv\in \N^k$, let $H_I(\sv)$ be the number of elements $\rv \in I$ with $\Vert\rv\Vert=\sv$. Then there is a polynomial $P \in \Q[\Y]$ such that
\[
H_I(\sv)=P(\sv)
\]
whenever $\min(\sv)$ is sufficiently large.
\end{corollary}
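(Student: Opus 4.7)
The plan is to apply Theorem~\ref{thm:hilbert} with $X = \N^m$ equipped with a carefully chosen non-trivial closure operator. Write $J \coloneqq \N^m \setminus I$, which is $\preceq$-upward closed because $I$ is downward closed, and define $\cl \colon \cP(\N^m) \to \cP(\N^m)$ by $\cl(C) \coloneqq C \cup J$. The four matroid axioms are easy to verify: monotonicity and idempotence are immediate from this set-theoretic description; finite character holds because $J \subseteq \cl(\emptyset)$, so any $a \in \cl(C)$ lies either in $J = \cl(\emptyset)$ or in $C$ itself; and Steinitz exchange is trivial, since $a \in \cl(Cb) \setminus \cl(C)$ forces $a = b$.

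Next, for $i \in \{1,\ldots,m\}$, let $\phi_i \colon \N^m \to \N^m$ be the shift map $\rv \mapsto \rv + \eev_{i,m}$. These maps commute, and each $\phi_i$ is an endomorphism of $(\N^m, \cl)$: if $a \in C$ then $\phi_i a \in \phi_i C \subseteq \cl(\phi_i C)$, while if $a \in J$ then $\phi_i a \in J \subseteq \cl(\phi_i C)$ because $J$ is upward closed. As noted in Section~\ref{sec:prelim}, any tuple of endomorphisms is a triangular system, so every part $\Phi_j$ of the partition of $\Phi = (\phi_1, \ldots, \phi_m)$ is triangular.

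An easy computation shows that for any finite $C \subseteq \N^m$, a subset $C_0 \subseteq C$ is $\cl$-independent precisely when $C_0 \subseteq I$, and it spans $C$ modulo $\cl$ precisely when $C_0 \supseteq C \cap I$; hence $C \cap I$ is the unique basis of $C$ and $\rk(C) = |C \cap I|$. Taking $A \coloneqq \{\zero_m\}$ (which is finite, as required by Theorem~\ref{thm:hilbert}), we have $\phi^{\rv}(\zero_m) = \rv$, so
\[
\Phi^{(\sv)}(A) = \{\rv \in \N^m : \Vert \rv \Vert = \sv\}, \qquad \rk\!\big(\Phi^{(\sv)}(A)\big) = |\{\rv \in I : \Vert \rv \Vert = \sv\}| = H_I(\sv).
\]
Applying Theorem~\ref{thm:hilbert} with $B = \emptyset$ produces the required polynomial. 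There is no genuine obstacle; the whole argument hinges on the idea of encoding ``membership in $I$'' as a rank condition in a matroid on $\N^m$, after which the result is a direct application of the main theorem.
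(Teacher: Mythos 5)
Your proposal is correct and takes essentially the same approach as the paper: the closure operator $\cl(C) = C \cup J$ with $J = \N^m \setminus I$ is exactly the localization of the trivial matroid on $\N^m$ at $\N^m \setminus I$, which is what the paper invokes (you just build the operator directly and verify the matroid axioms and endomorphism property by hand instead of appealing to the general localization facts already established in Section~\ref{sec:prelim}).
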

\begin{proof}
Let $C\coloneqq \N^m \setminus I$, let $(X,\cl)$ be the set $\N^m$ with the trivial closure operator, and let $(X,\cl_C)$ be the relativization of $(X,\cl)$ at $C$. Then $\rk_C(Y)= |Y \setminus C| =| Y \cap I|$ for any subset $Y\subseteq \N^m$. For each $i \in \{1,\ldots,m\}$, let $\phi_i\colon \N^m\to \N^m$ be the map $\rv \mapsto\rv+\eev_{i,m}$, so
\[
\rk_C(\Phi^{(\sv)}(\zero_m))=|\{\rv \in \N^m:\Vert \rv \Vert =\sv\}\cap I |=H_I(\sv).
\]
Since $I$ is $\preceq$-downward closed, we have $\phi_i(C) \subseteq C$ for each $i \in \{1,\ldots,m\}$. Thus, $\phi_1,\ldots,\phi_m$ are commuting endomorphisms of $(X,\cl_C)$, and the corollary follows from Theorem~\ref{thm:hilbert}.
\end{proof}

The following variant of Corollary~\ref{cor:ideal} was established by Kondratieva, Levin, Mikhalev, and Pankratiev~\cite{KLMP99}, with the $k = 1$ case first proven by Kolchin~\cite[Chapter 0, Lemma 16]{Ko73}. This variant can be deduced in the same way as Corollary~\ref{cor:ideal}; just use Corollary~\ref{cor:kolchin} in place of Theorem~\ref{thm:hilbert}:

\begin{corollary}[Theorem 2.2.5 in~\cite{KLMP99}]\label{cor:ideal2}
Let $I$ be an ideal of $\N^m$, and for each $\sv\in \N^k$, let $H^*_I(\sv)$ be the number of elements $\rv \in I$ with $\Vert\rv\Vert\preceq \sv$. Then there is a polynomial $Q \in \Q[\Y]$ such that
\[
H^*_I(\sv)=Q(\sv)
\]
whenever $\min(\sv)$ is sufficiently large.
\end{corollary}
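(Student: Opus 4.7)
The plan is to mirror the proof of Corollary~\ref{cor:ideal} essentially verbatim, but replacing the ``total weight'' set $\Phi^{(\sv)}$ with the ``componentwise bounded'' set $\Phi^{\preceq(\sv)}$ and invoking Corollary~\ref{cor:kolchin} instead of Theorem~\ref{thm:hilbert}. The only role of $I$ being $\preceq$-downward closed is to make its complement closed under each shift $\phi_i$, which is exactly what we need to translate $H^*_I$ into a rank in a localization.

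Concretely, I would set $C \coloneqq \N^m\setminus I$, take $(X,\cl)$ to be $\N^m$ equipped with the trivial closure, and pass to the localization $(X,\cl_C)$, in which the rank of any subset $Y\subseteq\N^m$ equals $|Y\cap I|$. For $i = 1,\ldots,m$, let $\phi_i\colon \N^m\to\N^m$ be the shift $\rv\mapsto\rv+\eev_{i,m}$; these are commuting self-maps and, since $I$ is $\preceq$-downward closed, we have $\phi_i(C)\subseteq C$, so each $\phi_i$ is an endomorphism of $(X,\cl_C)$. In particular, each block $\Phi_i$ of the partition is a triangular system, hence quasi-triangular. Applied at the base point $\zero_m$, this gives
\[
\rk_C\!\big(\Phi^{\preceq(\sv)}(\zero_m)\big) = \big|\{\rv\in\N^m:\Vert\rv\Vert\preceq\sv\}\cap I\big| = H^*_I(\sv).
\]
Corollary~\ref{cor:kolchin} (applied to the finite set $A = \{\zero_m\}$ and $B = \emptyset$) then supplies a polynomial $Q\in\Q[\Y]$ with $H^*_I(\sv) = Q(\sv)$ whenever $\min(\sv)$ is sufficiently large.

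There is essentially no obstacle here: the argument is a direct adaptation of the proof of Corollary~\ref{cor:ideal}, and the author has flagged the parallel explicitly. The only point that merits a line of verification is that the shift maps $\phi_i$ really are endomorphisms of the \emph{localized} matroid $(X,\cl_C)$, which is where the $\preceq$-downward closedness of $I$ enters; everything else is bookkeeping to rewrite the counting function $H^*_I$ as a rank in the form required by Corollary~\ref{cor:kolchin}.
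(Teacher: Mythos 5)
Your proposal matches the paper's argument exactly: the paper deduces this corollary by repeating the proof of Corollary~\ref{cor:ideal} with $\Phi^{\preceq(\sv)}$ in place of $\Phi^{(\sv)}$ and invoking Corollary~\ref{cor:kolchin} instead of Theorem~\ref{thm:hilbert}, which is precisely what you do. No discrepancies.
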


\subsection{The Hilbert polynomial}
Let $K$ be a field and let $R\coloneqq K[x_1,\ldots,x_m]$, where $x_1,\ldots,x_m$ are 
indeterminates. 
Using our partition $(\Phi_1,\ldots,\Phi_k)$, we associate to $R$ an $\N^k$-grading $R =\bigoplus_{\sv \in \N^k}R_{\sv}$ as follows:\ the graded part $R_{\sv}$ is the $K$-vector space generated by monomials $x_1^{r_1} \cdots x_m^{r_m}$ with $\Vert \rv\Vert =\sv$. Let $M=\bigoplus_{\sv \in \Z^k} M_\sv$ be a finitely generated graded $R$-module. Then each graded piece $M_\sv$ is a finite-dimensional $K$-vector space. The \textbf{Hilbert function of $M$}, denoted $H_M\colon \Z^k\to \N$, is given by $H_M(\sv)\coloneqq\dim_K(M_{\sv})$. The following result is classical:

\begin{corollary}\label{cor:classichilbert}
There is a polynomial $P \in \Q[\Y]$ such that
\[
H_M(\sv)=P(\sv)
\]
whenever $\min(\sv)$ is sufficiently large.
\end{corollary}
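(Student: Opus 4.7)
The plan is to apply Theorem~\ref{thm:hilbert} to the finitary matroid $(M,\cl)$, where $\cl$ denotes $K$-linear span; the associated rank function is then $K$-linear dimension. The operators $\phi_i\colon M\to M$ given by multiplication by $x_i$ are commuting $K$-linear endomorphisms, hence endomorphisms of $(M,\cl)$, so each part $\Phi_i$ of the partition is a triangular system and Theorem~\ref{thm:hilbert} applies.

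The one subtlety is that a finite generating set of $M$ need not lie in a single multidegree, so I first perform a shift. Fix homogeneous generators $a_1,\ldots,a_n$ of $M$ with $\deg(a_j)=\dv_j\in\Z^k$, and pick $D\in\N^k$ with $D\succeq \dv_j$ in each coordinate for all $j$. Set
\[
A\coloneqq\{x^{\rv}a_j : 1\leq j\leq n,\ \rv\in\N^m,\ \Vert\rv\Vert=D-\dv_j\},
\]
a finite set of homogeneous elements of multidegree $D$. I claim that for every $\sv'\in\N^k$,
\[
\text{the $K$-span of }\Phi^{(\sv')}(A)\text{ equals }M_{D+\sv'},\qquad\text{so}\qquad \rk(\Phi^{(\sv')}(A))=H_M(D+\sv').
\]

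To verify the claim, observe that $\Phi^{(\sv')}(A)=\{x^{\rv'+\rv}a_j:\Vert\rv'\Vert=\sv',\ \Vert\rv\Vert=D-\dv_j\}$, which I assert equals $\{x^{\uv}a_j : 1\leq j\leq n,\ \Vert\uv\Vert=D+\sv'-\dv_j\}$. One inclusion is immediate; for the other, given such $\uv$, I split it blockwise as $\uv=\rv+\rv'$ with $\Vert\rv\Vert=D-\dv_j$ and $\Vert\rv'\Vert=\sv'$ by greedily allocating each coordinate block of $\uv$ (the block sum of $\uv$ exceeds $(D-\dv_j)_i$, so the allocation is possible). Since $a_1,\ldots,a_n$ generate $M$ as an $R$-module, $\{x^{\uv}a_j:\Vert\uv\Vert=D+\sv'-\dv_j,\ j\leq n\}$ spans $M_{D+\sv'}$ over $K$, proving the claim. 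Applying Theorem~\ref{thm:hilbert} with $B=\emptyset$ yields a polynomial $P\in\Q[\Y]$ of degree at most $d_i-1$ in each $Y_i$ such that $\rk(\Phi^{(\sv')}(A))=P(\sv')$ for $\min(\sv')$ large, so $H_M(\sv)=P(\sv-D)$ for $\min(\sv)$ large.

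The main technical obstacle is handling generators of varying, possibly negative, multidegrees; the shift construction above — replacing each $a_j$ by all of its monomial translates into the fixed degree $D$ — is precisely what is needed to make the matroid-theoretic orbit $\Phi^{(\sv')}(A)$ match a graded piece of $M$. Once this alignment is made, everything else follows routinely from Theorem~\ref{thm:hilbert}.
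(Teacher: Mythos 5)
Your proof is correct and follows essentially the same route as the paper: set up the $K$-linear span matroid on $M$, take $\phi_i$ to be multiplication by $x_i$, locate a finite generating set $A$ concentrated in a single multidegree so that each $M_{D+\sv'}$ is the $K$-span of $\Phi^{(\sv')}(A)$, and invoke Theorem~\ref{thm:hilbert}. The only difference is that you make explicit the construction of $A$ and verify the blockwise splitting $\uv = \rv + \rv'$, a standard point that the paper simply asserts (``we can find an index $\sv_0$ and a finite set $A \subseteq M_{\sv_0}$ \dots'').
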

\begin{proof}
Let $(X,\cl)$ be the underlying set of our $R$-module $M$ with the $K$-linear closure operator (that is, the closure of any subset of $M$ is its $K$-linear span). For each $i \in \{1,\ldots,m\}$, let $\phi_i\colon M\to M$ be the map $a\mapsto x_i\cdot a$. Since $M$ is finitely generated, we can find an index $\sv_0$ and a finite set $A \subseteq M_{\sv_0}$ such that the $R$-submodule $\bigoplus_{\sv\succeq \sv_0} M_\sv$ is generated by $A$. By re-indexing, we may assume that $\sv_0 =\zero_k$. Then for each $\sv\in \N^k$, the graded part $M_{\sv}$ is generated as a $K$-vector space by $\Phi^{(\sv)}(A)$, so
\[
\dim_K(M_{\sv})=\rk(M_{\sv})=\rk(\Phi^{(\sv)}(A)).
\]
The corollary follows from Theorem~\ref{thm:hilbert}.
\end{proof}

While we deduce Corollary~\ref{cor:classichilbert} from Theorem~\ref{thm:hilbert}, one can also deduce Theorem~\ref{thm:hilbert} from Corollary~\ref{cor:classichilbert} using the decreasing function $f^\Phi_{A|B}$ that we constructed and exploited in Subsection~\ref{subsec:proof}:

\begin{proposition}\label{prop:whichonesappear}
Let $A,B$, and $\Phi = (\Phi_1,\ldots,\Phi_k)$ be as in Theorem~\ref{thm:hilbert}. There is a finitely generated graded $R$-module $M = \bigoplus_{\sv \in \N^k}M_{\sv}$ such that $\dim_KM_{\sv} = \rk(\Phi^{(\sv)}(A)|\Phi^{(\sv)}(B))$ for all $\sv \in \N^k$.
\end{proposition}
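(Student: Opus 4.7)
The plan is to use the decreasing function $f^\Phi_{A|B}\colon \N^m\to \N$ from Subsection~\ref{subsec:proof} as the bridge between the matroid-theoretic rank and the graded $R$-module. Writing $f\coloneqq f^\Phi_{A|B}$, I recall from Lemma~\ref{lem:hilbSupward} that $f$ is decreasing, that $f(\uv)\leq |A|$ for every $\uv\in\N^m$, and that
\[
\sum_{\Vert\uv\Vert=\sv} f(\uv)=\rk(\Phi^{(\sv)}(A)|\Phi^{(\sv)}(B)).
\]
Consequently, it suffices to build a finitely generated $\N^k$-graded $R$-module $M$ with $\dim_K M_\sv=\sum_{\Vert\uv\Vert=\sv}f(\uv)$ for every $\sv$.

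The approach is to slice $f$ by its level sets and realize each slice as a quotient of $R$ by a monomial ideal. For each $n\in\N$, set $J_n\coloneqq\{\uv\in\N^m:f(\uv)>n\}$. Since $f$ is decreasing, $J_n$ is $\preceq$-downward closed, so its complement $S_n(f)$ is $\preceq$-upward closed and, by Dickson's lemma, generated as such by its finite set $M_n(f)$ of $\preceq$-minimal elements. Let $I_n\subseteq R$ be the monomial ideal generated by $\{x^{\uv}:\uv\in M_n(f)\}$, and put $N^{(n)}\coloneqq R/I_n$. Then $N^{(n)}$ is a cyclic, hence finitely generated, $\N^m$-graded $R$-module whose $\uv$-th graded piece is one-dimensional if $\uv\in J_n$ and zero otherwise; coarsening this $\N^m$-grading via $\Vert\cdot\Vert$ produces an $\N^k$-graded $R$-module with $\dim_K N^{(n)}_\sv=|\{\uv\in J_n:\Vert\uv\Vert=\sv\}|$.

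Since $f(\uv)\leq|A|$, we have $J_n=\emptyset$ for $n\geq |A|$, so $M\coloneqq\bigoplus_{n=0}^{|A|-1}N^{(n)}$ is a finite direct sum of finitely generated $\N^k$-graded $R$-modules, and therefore is itself a finitely generated $\N^k$-graded $R$-module. Exchanging the two summations yields
\[
\dim_K M_\sv=\sum_{n=0}^{|A|-1}\bigl|\{\uv\in J_n:\Vert\uv\Vert=\sv\}\bigr|=\sum_{\Vert\uv\Vert=\sv}|\{n\in\N:f(\uv)>n\}|=\sum_{\Vert\uv\Vert=\sv}f(\uv),
\]
which is exactly the identity we want.

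There is no serious obstacle along this route: the real content has already been packaged into the fact that $f^\Phi_{A|B}$ is a bounded decreasing function whose fiberwise sums return the desired rank. The only subtlety is to ensure the $\N^k$-grading on $R$ transfers cleanly to each $N^{(n)}$, but this is automatic because monomial ideals are homogeneous with respect to any grading induced by a monoid homomorphism $\N^m\to\N^k$, in particular the one given by $\Vert\cdot\Vert$.
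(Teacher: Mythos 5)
Your proof is correct and is essentially the paper's proof: both decompose the decreasing function $f^\Phi_{A|B}$ into a bounded sum of indicator functions of nested $\preceq$-downward-closed subsets of $\N^m$, realize each as the Hilbert function of a cyclic quotient $R/(\text{monomial ideal})$, and take the finite direct sum. The only differences are cosmetic --- you index by $J_n = \{f > n\}$ for $n \geq 0$ where the paper uses $I_n = \{f \geq n\}$ for $n \geq 1$, and you describe the monomial ideal via its finitely many minimal generators $M_n(f)$ rather than via all monomials outside $J_n$.
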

\begin{proof}
Let $f^\Phi_{A|B}\colon \N^m\to \N$ be as in the proof of Theorem~\ref{thm:hilbert}, so $f^\Phi_{A|B}$ is decreasing and bounded above by $|A|$. For each $n \geq 1$, we put $I_n = \{\uv \in \N^m:f^\Phi_{A|B}(\uv)\geq n\}$, so $I_n$ is an ideal of $\N^m$, as defined in Corollary~\ref{cor:ideal}, and $I_n = \emptyset$ whenever $n>|A|$. Let $\chi_n$ be the indicator function of $I_n$, so $f^\Phi_{A|B} = \chi_1+\chi_2+\cdots$ and for $\sv \in \N^k$, we have 
\[
\rk(\Phi^{(\sv)}(A)|\Phi^{(\sv)}(B)) = \sum_{\Vert \uv\Vert =\sv}f^\Phi_{A|B}(\uv) = \sum_{n\geq 1} \sum_{\Vert \uv\Vert =\sv}\chi_n(\uv).
\]
For each $n\geq 1$, let $M_n \coloneqq R/(\bar{X}^{\uv}:\uv \not\in I_n)$, so $M_n = \bigoplus_{\sv \in \N^k}(M_n)_{\sv}$ is a graded $R$-module with
\[
\dim_K(M_n)_{\sv} = |\{\uv \in I_n:\Vert \uv\Vert = \sv\}| = \sum_{\Vert \uv\Vert =\sv}\chi_n(\uv).
\]
Let $M \coloneqq \bigoplus_{n \geq 1}M_n$. Then
\[
\dim_KM_{\sv} = \sum_{n\geq 1}\dim_K(M_n)_{\sv} = \rk(\Phi^{(\sv)}(A)|\Phi^{(\sv)}(B)).\qedhere
\]
\end{proof}

Much work has been done on characterizing exactly which numerical polynomials arise as Hilbert polynomials of finitely generated graded $R$-modules; see~\cite{KMU20} for our present multivariate setting. It follows from the above proposition that we don't obtain any new dimension polynomials in our framework:

\begin{corollary}\label{cor:whichonesappear}
The dimension polynomials we obtain in Theorem~\ref{thm:hilbert} are exactly the Hilbert polynomials of finitely generated graded $R$-modules, as in Corollary~\ref{cor:classichilbert}.
\end{corollary}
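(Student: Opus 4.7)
The plan is to establish both inclusions, and both are essentially immediate from the two results just cited. The proof will consist of combining Proposition~\ref{prop:whichonesappear} with Corollary~\ref{cor:classichilbert}, keeping track of the fact that $R = K[x_1,\ldots,x_m]$ is equipped with the $\N^k$-grading determined by the partition $(\Phi_1,\ldots,\Phi_k)$, so that both sides of the claimed equality refer to polynomials in $Y_1,\ldots,Y_k$ of the same flavor.

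For the inclusion ``dimension polynomials $\subseteq$ Hilbert polynomials,'' I would start with data $(X,\cl,\Phi,A,B)$ as in Theorem~\ref{thm:hilbert} and invoke Proposition~\ref{prop:whichonesappear} to produce a finitely generated graded $R$-module $M = \bigoplus_{\sv \in \N^k}M_{\sv}$ with
\[
\dim_K M_{\sv} \; = \; \rk(\Phi^{(\sv)}(A)\mid \Phi^{(\sv)}(B))
\]
for every $\sv \in \N^k$. By Corollary~\ref{cor:classichilbert}, the left-hand side is eventually polynomial in $\sv$ and equal to $H_M$; by Theorem~\ref{thm:hilbert}, the right-hand side is eventually equal to $P^\Phi_{A|B}$. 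Two polynomial functions on $\N^k$ that agree whenever $\min(\sv)$ is sufficiently large must coincide as elements of $\Q[\Y]$, so $P^\Phi_{A|B}$ is the Hilbert polynomial of $M$.

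For the reverse inclusion ``Hilbert polynomials $\subseteq$ dimension polynomials,'' I would simply unwind the proof of Corollary~\ref{cor:classichilbert}. Given a finitely generated graded $R$-module $M$, I take $(X,\cl)$ to be $M$ with $K$-linear closure, let $\phi_i$ be multiplication by $x_i$ (a commuting family of endomorphisms, and in particular giving a triangular system in each block $\Phi_i$), and pick, after re-indexing, a finite homogeneous generating set $A \subseteq M_{\zero_k}$. Then $\Phi^{(\sv)}(A)$ spans $M_{\sv}$ over $K$ for every $\sv \in \N^k$, so $\rk(\Phi^{(\sv)}(A)) = \dim_K M_{\sv}$. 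Applying Theorem~\ref{thm:hilbert} with $B = \emptyset$ yields $H_M(\sv) = P^\Phi_{A\mid\emptyset}(\sv)$ eventually, hence as polynomials, which exhibits $H_M$ as a dimension polynomial.

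There is no real obstacle; the only subtlety is ensuring that ``equality for $\min(\sv)$ sufficiently large'' upgrades to equality as polynomials, which is standard since such a set contains a product of cofinite arithmetic progressions and is in particular Zariski dense in $\CC^k$. One small bookkeeping point is that Proposition~\ref{prop:whichonesappear} is stated for arbitrary (possibly non-endomorphism) triangular systems, so the module $M$ it produces need not come from the naive ``multiplication by $x_i$'' construction used in the reverse direction; but that is exactly the asymmetry that makes the corollary interesting, and the statement only asserts equality of the two sets of numerical polynomials, not any compatibility of the underlying data.
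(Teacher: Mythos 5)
Your proposal is correct and is essentially the argument the paper has in mind: the paper derives the corollary immediately from Proposition~\ref{prop:whichonesappear} for one inclusion, and the other inclusion is implicit in the proof of Corollary~\ref{cor:classichilbert}, exactly as you unwind it. The remark that eventual agreement on $\{\sv : \min(\sv) \geq N\}$ forces equality in $\Q[\Y]$ is correct and worth making explicit.
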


\subsection{Tropical ideals}
Let $\bar{\R}$ be the tropical semiring $\R\cup \{\infty\}$ with operations $a \oplus b
\coloneqq \min(a,b)$ and $a \odot b = a+b$. 
Let $\bar{\R}[\x]=\bar{\R}[x_1,\ldots,x_m]$ consist of the tropical polynomials $f(\x) = \bigoplus_\rv f_\rv\odot\x^\rv$. Then $\bar{\R}[\x]$ is a tropical semiring in its own right, where $\oplus$ and $\odot$ are extended to $\bar{\R}[\x]$ in the natural way. We associate to each tropical polynomial $f$ the subset of monomials $\supp(f)\coloneqq\{\x^{\rv}:f_\rv\neq \infty\}$.

\smallskip
An \textbf{ideal} is a subset $I\subseteq \bar{\R}[\x]$ which is closed under sums and multiplication by elements of $\bar{\R}[\x]$. Following Maclagan and Rinc\'{o}n~\cite{MR18}, we say that an ideal $I$ is \textbf{tropical} if it satisfies the `monomial elimination axiom':\ for any $f, g \in I$ and any $\rv\in \N^m$ for which $f_\rv = g_\rv\neq \infty$, there is $h\in I$ such that $h_{\rv} = \infty$ and $h_{\uv} \geq f_{\uv}\oplus g_{\uv}$ for all $\uv\in \N^m$, with equality whenever $f_{\uv} \neq g_{\uv}$. Using our partition $(\Phi_1,\ldots,\Phi_k)$, we associate to $\bar{\R}[\x]$ an $\N^k$-grading, where the graded part $\bar{\R}[\x]_{\sv}$ consists of all homogeneous tropical polynomials of degree $\sv$. We call an ideal $I$ \textbf{homogeneous} if it is homogeneous with respect to this grading.

\smallskip
Let $I$ be a homogeneous tropical ideal. For $\sv \in \N^k$, let $I_{\sv} = I \cap \bar{\R}[\x]_{\sv}$. Let $\Mon_\sv \coloneqq \{\x^\rv:\Vert\rv\Vert = \sv\}$ be the set of monomials of degree $\sv$, and define the closure operator $\cl_I\colon \cP(\Mon_{\sv})\to \cP(\Mon_{\sv})$ as follows:\ for $A \subseteq \Mon_{\sv}$, let $\cl_I(A)$ consist all elements of $A$ along with all $\x^\rv\in \Mon_{\sv}$ such that $A_0\cup\{\x^{\rv}\} = \supp(f)$ for some $A_0 \subseteq A$ and some $f \in I_{\sv}$.

\begin{lemma}
$(\Mon_\sv,\cl_I)$ is a finite matroid.
\end{lemma}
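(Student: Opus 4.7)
The plan is to verify each of the four axioms of a finitary matroid for $(\Mon_\sv, \cl_I)$. Since $\Mon_\sv$ is finite, the resulting matroid will automatically be finite, and finite character of $\cl_I$ is trivial. Monotonicity is built into the definition: $A \subseteq \cl_I(A)$ by construction, and any witnessing pair $(A_0, f)$ for $\x^\rv \in \cl_I(A)$ remains a witness when $A$ is enlarged to a superset.

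For Steinitz exchange, suppose $\x^\rv \in \cl_I(A \cup \{\x^\uv\}) \setminus \cl_I(A)$; we may assume $\x^\rv \neq \x^\uv$. Because $\x^\rv \notin \cl_I(A)$, any witness $\supp(f) = A_0 \cup \{\x^\rv\}$ with $A_0 \subseteq A \cup \{\x^\uv\}$ must genuinely contain $\x^\uv$. Writing $A_0 = A_1 \cup \{\x^\uv\}$ with $A_1 \subseteq A$, the same $f$ together with the subset $A_1 \cup \{\x^\rv\} \subseteq A \cup \{\x^\rv\}$ witnesses $\x^\uv \in \cl_I(A \cup \{\x^\rv\})$. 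This is just the symmetry of the roles of $\x^\uv$ and $\x^\rv$ in $\supp(f)$.

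The main work is idempotence, and this is where the monomial elimination axiom is essential. Suppose toward a contradiction that $\x^\rv \in \cl_I(\cl_I(A)) \setminus \cl_I(A)$. Choose $f \in I_\sv$ with $\supp(f) = B_0 \cup \{\x^\rv\}$ and $B_0 \subseteq \cl_I(A)$ (note $\x^\rv \notin B_0$, else $\x^\rv \in \cl_I(A)$), and enumerate $B_0 \setminus A = \{\bv_1, \ldots, \bv_n\}$. For each $\bv_i$, pick $g_i \in I_\sv$ with $\supp(g_i) = A_i \cup \{\bv_i\}$ and $A_i \subseteq A$. The strategy is to iteratively eliminate $\bv_1, \ldots, \bv_n$ from $f$: starting with $h_0 := f$, at step $i$ I tropically rescale $g_i$ by a suitable scalar in $\R$ so that its $\bv_i$-coefficient matches $(h_{i-1})_{\bv_i}$, and then apply the monomial elimination axiom to obtain $h_i \in I_\sv$ with $(h_i)_{\bv_i} = \infty$ and $\supp(h_i) \subseteq (\supp(h_{i-1}) \cup A_i \cup \{\bv_i\}) \setminus \{\bv_i\}$. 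The critical point is that $\x^\rv \notin A \cup \{\bv_1, \ldots, \bv_n\}$, so $\x^\rv \notin \supp(g_i)$ for each $i$; thus the coefficient of $\x^\rv$ in the rescaled $g_i$ is $\infty$ while $(h_{i-1})_{\x^\rv}$ is finite, and the equality clause of the axiom forces $(h_i)_{\x^\rv} = (h_{i-1})_{\x^\rv}$. Inductively, $h_n$ has $\x^\rv$ in its support and $\supp(h_n) \subseteq A \cup \{\x^\rv\}$, so $h_n$ witnesses $\x^\rv \in \cl_I(A)$, a contradiction.

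The main obstacle is the bookkeeping in the idempotence argument: one must simultaneously confirm that $\x^\rv$ survives every elimination step and that each newly introduced monomial lies in $A$. Both issues are resolved by the strong conclusion of the monomial elimination axiom, which pins down the coefficient of every monomial on which the two input polynomials disagree.
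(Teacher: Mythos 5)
Your proof is correct and, at its core, handles idempotence by the same device as the paper: one eliminates the ``bad'' monomials of the witness support lying outside $A$ one at a time using the monomial elimination axiom, relying on the clause ``with equality whenever $f_\uv \neq g_\uv$'' to guarantee that $\x^{\rv}$ survives each elimination, since $\x^{\rv}$ lies in the support of the current witness but never in the support of the rescaled $g_i$ (because $\supp(g_i) \subseteq A \cup \{\bv_i\}$ while $\x^{\rv} \notin A$ and $\x^{\rv} \neq \bv_i$). The only difference is organizational: the paper chooses a witness $f$ minimizing $|B \setminus A|$ and derives a contradiction from a \emph{single} elimination step, whereas you fix an arbitrary witness up front and iterate through all of $B_0 \setminus A$. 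Your iterative scheme is sound but needs two small pieces of bookkeeping that the minimality argument sidesteps automatically. First, a monomial $\bv_i$ once eliminated must not be reintroduced; this holds because for $j > i$ one has $\supp(g_j) \subseteq A \cup \{\bv_j\}$ and $\bv_i \notin A$, $\bv_i \neq \bv_j$, but it should be stated. Second, if at stage $i$ the monomial $\bv_i$ already has $(h_{i-1})_{\bv_i} = \infty$, no finite tropical scalar makes the coefficients agree, and the step should simply be skipped by setting $h_i \coloneqq h_{i-1}$. With those caveats spelled out, the argument goes through, and the monotonicity and exchange verifications are essentially identical to the paper's.
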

\begin{proof}
Monotonicity is clear. For idempotence, let $\x^{\rv} \in \cl_I(\cl_I(A))$, and take $f \in I_{\sv}$ with $\supp(f) = B\cup \{\x^{\rv}\}$ for some $B \subseteq \cl_I(A)$. We may assume that $|B \setminus A|$ is minimal, and we want to show that $|B \setminus A| = 0$. Suppose that this is not the case, let $\x^{\uv} \in B \setminus A$, and take $g \in I_{\sv}$ with $\supp(g) = A_0\cup\{\x^{\uv}\}$ for some $A_0 \subseteq A$. By multiplying $g$ with an appropriate element of $\bar{\R}$, we may arrange that $g_{\uv} = f_{\uv}$. By the monomial elimination axiom, there is $h\in I_{\sv}$ such that $\supp(h) = B^*\cup\{\x^{\rv}\}$ for some $B^* \subseteq (B\cup A_0)\setminus \{\x^{\uv}\}$. But then $|B^* \setminus A|< |B \setminus A|$, contradicting our choice of $B$. Finally, for exchange, let $\x^{\rv},\x^{\uv} \in \Mon_{\sv}$ and $A \subseteq \Mon_{\sv}$ with $\x^{\rv} \in \cl_I(\{\x^{\uv} \}\cup A) \setminus \cl_I(A)$. Take $f \in I_{\sv}$ with $\supp(f) =B \cup \{\x^{\rv} \}$ for some $B \subseteq \{\x^{\uv}\}\cup A$. If $B \subseteq A$, then we would have $\x^{\rv} \in \cl_I(A)$, so $B$ must contain $\x^{\uv}$. Then $\x^{\uv} \in \cl_I(\{\x^{\rv} \}\cup A)$, as witnessed by $f$.
\end{proof}

\begin{corollary}\label{cor:tropical}
Let $I$ be a homogeneous tropical ideal. There is a polynomial $P \in \Q[\Y]$ such that
\[
\rk_I(\Mon_{\sv})=P(\sv)
\]
whenever $\min(\sv)$ is sufficiently large, where $\rk_I$ is the rank associated to $\cl_I$.
\end{corollary}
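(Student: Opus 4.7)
The plan is to realize the tropical setting as an instance of Theorem~\ref{thm:hilbert}, with the role of $X$ played by the set of all monomials and with the operators $\phi_i$ corresponding to multiplication by $x_i$. Let $X\coloneqq\bigcup_{\sv\in\N^k}\Mon_{\sv}$ and extend $\cl_I$ to $X$ by declaring, for $\x^{\rv}\in X$ and $A\subseteq X$, that $\x^{\rv}\in\cl_I(A)$ if and only if $\x^{\rv}\in\cl_I(A\cap\Mon_{\Vert\rv\Vert})$ in the sense of the preceding lemma. Because $I$ is homogeneous, any $f\in I$ splits as a sum of homogeneous elements of $I$, so every witness $f$ of a dependence $\x^{\rv}\in\cl_I(A_0\cup\{\x^{\rv}\})$ can be taken to lie in $I_{\Vert\rv\Vert}$; hence the extension does not create new dependencies across degrees, and the resulting structure is the disjoint union of the per-degree finite matroids $(\Mon_{\sv},\cl_I)$.

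From there I would verify quickly that $(X,\cl_I)$ is a finitary matroid. Monotonicity and finite character are immediate from the definition. Idempotence and Steinitz exchange reduce to their per-degree counterparts established in the lemma above, since both the element under consideration and all relevant witnesses sit in a single graded piece $\Mon_{\sv}$.

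Next define the commuting maps $\phi_i\colon X\to X$ by $\phi_i(\x^{\rv})\coloneqq\x^{\rv+\eev_{i,m}}$. To see each $\phi_i$ is an endomorphism of $(X,\cl_I)$, suppose $\x^{\rv}\in\cl_I(A)$ and pick $A_0\subseteq A$ and $f\in I$ with $\supp(f)=A_0\cup\{\x^{\rv}\}$; then $x_i\odot f\in I$ and $\supp(x_i\odot f)=\phi_i(A_0)\cup\{\phi_i(\x^{\rv})\}$, so $\phi_i(\x^{\rv})\in\cl_I(\phi_i(A))$. In particular $\Phi=(\phi_1,\ldots,\phi_m)$ is a tuple of commuting endomorphisms, and each part $\Phi_i$ of the given partition is automatically a triangular system.

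Finally, take $A\coloneqq\{\x^{\zero_m}\}$ and $B\coloneqq\emptyset$. Then $\phi^{\rv}(\x^{\zero_m})=\x^{\rv}$, so
\[
\Phi^{(\sv)}(A)=\{\x^{\rv}:\Vert\rv\Vert=\sv\}=\Mon_{\sv}
\]
for every $\sv\in\N^k$, and Theorem~\ref{thm:hilbert} produces a polynomial $P\in\Q[\Y]$ of degree at most $d_i-1$ in each variable $Y_i$ with $\rk_I(\Mon_{\sv})=\rk(\Phi^{(\sv)}(A))=P(\sv)$ for $\min(\sv)$ sufficiently large. There is no real obstacle here: the content is packaged in the lemma (where the monomial elimination axiom is used) and in Theorem~\ref{thm:hilbert}; the only thing to check carefully is that homogeneity of $I$ lets one extend $\cl_I$ to $X$ and that ideal closure translates into $\phi_i$ being an endomorphism.
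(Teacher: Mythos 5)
Your proposal is correct and matches the paper's argument: extend $\cl_I$ to the set of all monomials by restriction to graded pieces, observe this yields a finitary matroid as a disjoint union of the finite per-degree matroids, check that multiplication by each $x_i$ is an endomorphism (using that $x_i\odot f\in I$ and $\supp(x_i\odot f)=\phi_i(\supp(f))$), and apply Theorem~\ref{thm:hilbert} with $A=\{\x^{\zero_m}\}$ and $B=\emptyset$. The paper brushes past the verifications as ``routine,'' and your remark about homogeneity is slightly redundant (the disjoint-union structure is already built into the definition of the extended $\cl_I$), but nothing is amiss.
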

\begin{proof}
Let $\Mon\coloneqq \{\x^{\rv}:\rv \in \N^m\}$ be the collection of all monomials. We extend the closure operators on the sets $\Mon_{\sv}$ to a closure operator $\cl_I$ on $\Mon$ as follows:
\[
\x^{\rv} \in \cl_I(A) :\Longleftrightarrow \x^{\rv} \in \cl_I(A \cap \Mon_{\Vert \rv\Vert}).
\]
Then $(\Mon,\cl_I)$ is the disjoint union of the finite matroids $(\Mon_{\sv},\cl_I)$, so $(\Mon,\cl_I)$ is a finitary (infinite) matroid, as can be easily verified. It is also routine to check that the map $\x^\rv \mapsto x_i\odot\x^{\rv}$ is an endomorphism of $(\Mon,\cl_I)$. Note that 
\[
\rk_I(\Mon_{\sv})=\rk_I(\Phi^{(\sv)}(\x^{\zero_m})),
\]
so the corollary follows from Theorem~\ref{thm:hilbert}.
\end{proof}

\begin{remark}
Maclagan and Rinc\'{o}n established Corollary~\ref{cor:tropical} in the case $k = 1$~\cite{MR18}. In their definition of a tropical ideal, the underlying matroid is defined in terms of circuits (minimal dependent sets). The circuits are exactly the minimal nonempty elements of the set $\{\supp(f):f \in I_{\sv}\}$, ordered by inclusion. In fact, Maclagan and Rinc\'{o}n equip the underlying sets $\Mon_{\sv}$ with the richer structure of a \emph{valuated} matroid (though the Hilbert polynomial is defined in terms of the underlying classical matroid).
\end{remark}

\section{Applications II:\ Simplicial complexes and Betti numbers}\label{sec:betti}
\noindent
Let $\cK$ be a simplicial complex. In this section, we consider commuting simplicial endomorphisms $\phi_1,\ldots,\phi_m$ of $\cK$. We will use Theorem~\ref{thm:hilbert} to show that for a finite subcomplex $A \subseteq \cK$, the Betti numbers $b_n(\Phi^{(\sv)}(A))$ are all eventually polynomial in $\sv$.

\smallskip
Let $(C_\bullet,\partial_\bullet)$ be the chain complex associated to $\cK$, so $C_n$ is the free abelian group generated by the $n$-simplices in $\cK$ and $\partial_n\colon C_n\to C_{n-1}$ is the boundary map. For each $n$, we define two ranks on $C_n$ as follows:\ given a finite subset $B \subseteq C_n$, we let $\rk_n(B)$ be the rank of the group generated by $B$, and we let $\rk_n^\partial(B)$ be the rank of the image of this group under $\partial_n$. Note that $\rk_0^\partial$ is always zero.

\smallskip
Given a finite subcomplex $A$ of $\cK$, we have an associated chain subcomplex $(C_\bullet(A),\partial_\bullet) \subseteq (C_\bullet,\partial_\bullet)$. We write $\rk_n(A)$ and $\rk_n^\partial(A)$ in place of $\rk_n(C_n(A))$ and $\rk_n^\partial(C_n(A))$. Then $\rk_n(A)- \rk_n^\partial(A)$ is the rank of the kernel of $\partial_n\colon C_n(A)\to C_{n-1}(A)$, and $\rk_{n+1}^\partial(A)$ is the rank of the image of $\partial_{n+1}\colon C_{n+1}(A)\to C_{n}(A)$. It follows that the $n$-th Betti number of $A$ --- that is, the rank of the $n$-th simplicial homology group $H_n(A)$ --- is exactly $\rk_n(A)- \rk_n^\partial(A)-\rk_{n+1}^\partial(A)$. It is routine to verify the following:

\begin{lemma}
For each $n$, the functions $\rk_n$ and $\rk_n^\partial$ both are rank functions of finitary matroids on $C_n$. That is, if we define the closure operators $\cl_n$ and $\cl_n^\partial$ on $C_n$ by putting 
\begin{align*}
a \in \cl_n(B) &:\Longleftrightarrow \rk_n(B_0\cup\{a\}) = \rk_n(B_0) \text{ for some finite }B_0\subseteq B\\
a \in \cl_n^\partial(B) &:\Longleftrightarrow \rk_n^\partial(B_0\cup\{a\}) = \rk_n^\partial(B_0) \text{ for some finite }B_0 \subseteq B,
\end{align*}
then $(C_n,\cl_n)$ is a finitary matroid with corresponding rank function $\rk_n$, and likewise for $(C_n,\cl_n^\partial)$.
\end{lemma}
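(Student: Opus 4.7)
The plan is to realize both $\cl_n$ and $\cl_n^\partial$ as pullbacks of the $\Q$-linear-dependence matroid via linear maps, and then invoke the standard fact that $\Q$-linear dependence on a $\Q$-vector space is a finitary matroid. Set $V_n := C_n \otimes_\Z \Q$ and extend $\partial_n$ $\Q$-linearly to $\partial_n\colon V_n \to V_{n-1}$. Since $C_n$ is free abelian, for any finite $B \subseteq C_n$ one has $\rk_n(B) = \dim_\Q \operatorname{span}_\Q(B)$ and $\rk_n^\partial(B) = \dim_\Q \operatorname{span}_\Q(\partial_n B)$. Combining this with the finite-character clause built into the definitions yields
\[
a \in \cl_n(B) \iff a \in \operatorname{span}_\Q(B), \qquad a \in \cl_n^\partial(B) \iff \partial_n a \in \operatorname{span}_\Q(\partial_n B).
\]

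For $\cl_n$, the right-hand characterization identifies the operator with the restriction to $C_n \subseteq V_n$ of the $\Q$-linear span closure on $V_n$. Linear span on a vector space is a textbook finitary matroid, and all four matroid axioms survive restriction to an arbitrary subset, which is a routine verification (in each case the finite witness $B_0$ is inherited directly from the ambient matroid). The corresponding rank of a finite $B \subseteq C_n$ is then $\dim_\Q \operatorname{span}_\Q(B) = \rk_n(B)$, as required.

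For $\cl_n^\partial$, the operator is the $\partial_n$-pullback of the $\Q$-linear span closure on $V_{n-1}$. Monotonicity, idempotence, and finite character follow immediately from the corresponding properties of $\operatorname{span}_\Q$ together with the $\Q$-linearity of $\partial_n$. The one step needing an argument is Steinitz exchange: if $\partial_n a \in \operatorname{span}_\Q(\partial_n B, \partial_n b) \setminus \operatorname{span}_\Q(\partial_n B)$, then $\partial_n b$ must appear with nonzero coefficient in any expression for $\partial_n a$ modulo $\operatorname{span}_\Q(\partial_n B)$, so solving for $\partial_n b$ gives $\partial_n b \in \operatorname{span}_\Q(\partial_n B, \partial_n a)$, i.e., $b \in \cl_n^\partial(Ba)$. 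The resulting rank function sends $B$ to $\dim_\Q \operatorname{span}_\Q(\partial_n B) = \rk_n^\partial(B)$. The only mildly nontrivial point in the whole proof is this Steinitz exchange verification, and even that is a one-line linear algebra calculation in $V_{n-1}$; the real conceptual move is simply recognizing both operators as instances of the pullback-of-linear-dependence construction.
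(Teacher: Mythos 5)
Your proof is correct. The paper states this lemma with the remark ``It is routine to verify the following'' and supplies no proof, so there is no paper argument to compare against; your write-up simply makes the routine verification explicit. The key identifications $\rk_n(B) = \dim_\Q \operatorname{span}_\Q(B)$ and $\rk_n^\partial(B) = \dim_\Q \operatorname{span}_\Q(\partial_n B)$ are correct because each $C_n$ is free abelian, and reducing both closure operators to pullbacks of $\Q$-linear span along set maps (the inclusion $C_n \hookrightarrow V_n$ for $\cl_n$, and $\partial_n\colon C_n \to V_{n-1}$ for $\cl_n^\partial$) is a clean way to package the verification. A small tightening you might consider: rather than treating restriction and pullback as two cases, note that both are instances of the general fact that if $f\colon X \to Y$ is any map and $(Y,\cl_Y)$ is a finitary matroid, then $B \mapsto f^{-1}(\cl_Y(f(B)))$ is a finitary matroid closure on $X$ with rank $B \mapsto \rk_Y(f(B))$. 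All four axioms (including Steinitz exchange, the one you single out) transfer formally along $f$; this subsumes both of your cases and avoids the separate discussion of restriction.
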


Let $V(\cK)$ denote the vertex set of $\cK$ (that is, the set of
zero-dimensional simplices). A \textbf{simplicial endomorphism} of $\cK$ is a
map $f\colon V(\cK)\to V(\cK)$ that maps every simplex in $\cK$ to a (possibly
lower-dimensional) simplex in $\cK$. Explicitly, if $\sigma\in \cK$ is an $n$-simplex
with vertex set $\{v_0,\ldots,v_n\}\subseteq V(\cK)$, then we require that
$\{f(v_0),\ldots,f(v_n)\}$ be the vertex set of a simplex in $\cK$, which we denote
$f(\sigma)$. 
If $A$ is a subcomplex of $\cK$, then $f(A)\coloneqq\{f(\sigma):\sigma\in A\}$ is
also a subcomplex of $\cK$. 
Any simplicial endomorphism of $\cK$ induces a chain endomorphism $f_\bullet\colon(C_\bullet,\partial_\bullet)\to(C_\bullet,\partial_\bullet)$. Thus, we have the following:

\begin{lemma}
Let $f$ be a simplicial endomorphism of $\cK$, and let $f_\bullet\colon(C_\bullet,\partial_\bullet)\to(C_\bullet,\partial_\bullet)$ be the chain endomorphism induced by $f$. Then $f_n$ is an endomorphism of the finitary matroids $(C_n,\cl_n)$ and $(C_n,\cl_n^\partial)$ for each $n$.
\end{lemma}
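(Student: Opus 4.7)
The plan is to unpack what membership in $\cl_n(B)$ and $\cl_n^\partial(B)$ means algebraically, and then deduce the endomorphism property from two facts: (i) $f_n\colon C_n \to C_n$ is a group homomorphism (since it is a component of the induced chain map), and (ii) $f_\bullet$ commutes with the boundary, i.e.\ $\partial_n\circ f_n = f_{n-1}\circ \partial_n$.

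First I would observe that, since $C_n$ is a free abelian group, $\rk_n(B_0\cup\{a\}) = \rk_n(B_0)$ holds exactly when some nonzero integer multiple of $a$ lies in the subgroup generated by $B_0$. Hence $a\in\cl_n(B)$ if and only if there exist finitely many $b_1,\ldots,b_r\in B$, integers $m_1,\ldots,m_r$, and a nonzero integer $n$ with $na = \sum_i m_i b_i$. Applying the group homomorphism $f_n$ to both sides gives $n f_n(a) = \sum_i m_i f_n(b_i)$, which witnesses $f_n(a)\in \cl_n(f_n(B))$.

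For the matroid $(C_n,\cl_n^\partial)$, the same characterization applies to $\rk_n^\partial$: membership $a\in\cl_n^\partial(B)$ amounts to the existence of $b_1,\ldots,b_r \in B$, integers $m_1,\ldots,m_r$, and a nonzero integer $n$ such that $n\partial_n(a) = \partial_n\bigl(\sum_i m_i b_i\bigr)$ inside $C_{n-1}$. Applying $f_{n-1}$ and using the chain-map identity $f_{n-1}\circ\partial_n = \partial_n\circ f_n$, we obtain
\[
n\partial_n(f_n(a)) = f_{n-1}\bigl(\partial_n(\textstyle\sum_i m_i b_i)\bigr) = \partial_n\bigl(\textstyle\sum_i m_i f_n(b_i)\bigr),
\]
which shows $f_n(a)\in\cl_n^\partial(f_n(B))$.

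There is no real obstacle here; the two closure operators were defined precisely so that dependence is detected by $\Z$-linear relations (possibly after multiplying by a nonzero integer), and any group homomorphism respects such relations. The only point requiring care is that for $\cl_n^\partial$ one must invoke that $f_\bullet$ is a genuine chain map, not merely a degreewise homomorphism; this is exactly what is built into the construction of the induced map from a simplicial endomorphism (with the convention that an $n$-simplex collapsed to lower dimension maps to $0$ in $C_n$, which is compatible with $\partial$).
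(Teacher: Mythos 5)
Your proof is correct and matches the paper's intended (but unwritten) argument: the paper simply declares the lemma to follow from the observation that a simplicial endomorphism induces a chain endomorphism, and your write-up supplies exactly the routine verification that a group homomorphism preserves the torsion-spanning relations defining $\cl_n$, while a genuine chain map additionally preserves the boundary-detected relations defining $\cl_n^\partial$.
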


We can now state our result on the growth of Betti numbers:

\begin{corollary}\label{cor:betti}
Let $A$ be a finite subcomplex of $\cK$ and suppose that each $\phi_i$ is a simplicial endomorphism of $\cK$. Then for each $n$, the $n$-th Betti number $b_n(\Phi^{(\sv)}(A))$ is eventually polynomial in $\sv$.
\end{corollary}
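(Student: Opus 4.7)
The plan is to reduce the claim to three applications of Theorem~\ref{thm:hilbert}, exploiting the identity
\[
b_n\big(\Phi^{(\sv)}(A)\big) \;=\; \rk_n\big(\Phi^{(\sv)}(A)\big) \,-\, \rk_n^\partial\big(\Phi^{(\sv)}(A)\big) \,-\, \rk_{n+1}^\partial\big(\Phi^{(\sv)}(A)\big)
\]
recalled earlier in this section. Since finite sums and differences of eventually polynomial functions are eventually polynomial, it suffices to show that $\sv\mapsto\rk_n\big(\Phi^{(\sv)}(A)\big)$ and $\sv\mapsto\rk_n^\partial\big(\Phi^{(\sv)}(A)\big)$ are eventually polynomial for every $n$.

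Fix $n$. The idea is to work inside the finitary matroids $(C_n,\cl_n)$ and $(C_n,\cl_n^\partial)$. Let $A_n\subseteq C_n$ denote the (finite) set of $n$-simplices of $A$, and let $\Phi_n\coloneqq\big((\phi_1)_n,\ldots,(\phi_m)_n\big)$ be the induced tuple of chain endomorphisms on $C_n$, partitioned to match $\Phi$. By the second lemma of this section, each $(\phi_i)_n$ is an endomorphism of both matroids, so each block of $\Phi_n$ is a triangular system for each of them.

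The crucial identification is that the nonzero elements of $\Phi_n^{(\sv)}(A_n)=\{(\phi_n)^{\rv}(\sigma):\sigma\in A_n,\ \|\rv\|=\sv\}$ are exactly the $n$-simplices of $\Phi^{(\sv)}(A)$. One direction is immediate. For the other, suppose $\tau\in A$ and $\|\rv\|=\sv$ with $\phi^{\rv}(\tau)$ an $n$-simplex; then $\phi^{\rv}$ identifies $\dim(\tau)-n$ pairs of vertices of $\tau$, and selecting one vertex from each fiber produces an $n$-dimensional face $\sigma$ of $\tau$ on which $\phi^{\rv}$ is injective, with $\phi^{\rv}(\sigma)=\phi^{\rv}(\tau)$. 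Since $A$ is a subcomplex, $\sigma\in A_n$, and $(\phi_n)^{\rv}(\sigma)=\phi^{\rv}(\tau)\neq 0$ in $C_n$. Consequently, the subgroup of $C_n$ generated by $\Phi_n^{(\sv)}(A_n)$ is exactly $C_n(\Phi^{(\sv)}(A))$, giving
\[
\rk_n\big(\Phi_n^{(\sv)}(A_n)\big)=\rk_n\big(\Phi^{(\sv)}(A)\big),\qquad \rk_n^\partial\big(\Phi_n^{(\sv)}(A_n)\big)=\rk_n^\partial\big(\Phi^{(\sv)}(A)\big).
\]

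With this reduction, applying Theorem~\ref{thm:hilbert} to $(C_n,\cl_n)$ (resp.\ $(C_n,\cl_n^\partial)$) with the finite set $A_n$ and $B=\emptyset$ yields a polynomial describing each of these two ranks for $\min(\sv)$ sufficiently large. Summing the three contributions in the Betti formula for indices $n$ and $n+1$ completes the proof. The main obstacle is the identification in the previous paragraph: one must rule out that $\Phi^{(\sv)}(A)$ acquires $n$-simplices solely through the collapse of strictly higher-dimensional simplices of $A$ without their already being witnessed by an $n$-simplex of $A$. Once this is in place, everything else is a direct invocation of Theorem~\ref{thm:hilbert}.
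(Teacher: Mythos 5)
Your proof is correct and follows the same strategy as the paper's: decompose the Betti number as $b_n = \rk_n - \rk_n^\partial - \rk_{n+1}^\partial$ and apply Theorem~\ref{thm:hilbert} three times in the matroids $(C_n,\cl_n)$ and $(C_n,\cl_n^\partial)$, using that the induced chain maps are matroid endomorphisms. The extra step you supply---checking that every $n$-simplex of $\Phi^{(\sv)}(A)$, including those obtained by collapsing a higher-dimensional $\tau\in A$, already appears among the nonzero elements of $\Phi_n^{(\sv)}(A_n)$ because $A$ is closed under faces and one may restrict $\phi^{\rv}$ to a section of its vertex map on $\tau$---is a genuine identification that the paper's one-line proof elides, and it is exactly what justifies equating $\rk_n(\Phi^{(\sv)}(A))$ with the Theorem~\ref{thm:hilbert} quantity $\rk_n(\Phi_n^{(\sv)}(A_n))$; your phrasing ``identifies $\dim(\tau)-n$ pairs of vertices'' is a little loose (fibers may be larger than pairs), but the argument of choosing one preimage per fiber is sound.
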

\begin{proof}
For a fixed $n$, we have $b_n(\Phi^{(\sv)}(A))= \rk_n(\Phi^{(\sv)}(A)) - \rk_n^\partial(\Phi^{(\sv)}(A))-\rk_{n+1}^\partial(\Phi^{(\sv)}(A))$. These three ranks are eventually polynomial in $\sv$ by Theorem~\ref{thm:hilbert}, so $b_n(\Phi^{(\sv)}(A))$ is as well.
\end{proof}

\subsection{Topological dynamical systems}\label{subsec:counterexample1}
Let $B$ be a topological space, let $\phi_1,\ldots,\phi_m$ be commuting continuous maps $B\to B$, and let $A$ be a compact subspace of $B$. We say that the topological dynamical system $(B,A;\phi_1,\ldots,\phi_m)$ is \textbf{triangulable} if there is a simplicial complex $\cK$ with underlying space $|\cK|$, a finite subcomplex $\cK_0 \subseteq \cK$, simplicial maps $f_1,\ldots,f_m\colon \cK\to \cK$, and a homeomorphism $\tau\colon |\cK|\to B$ such that $\tau(|\cK_0|) = A$ and such that $\tau\circ f_i = \phi_i\circ \tau$ for each $i$. If $(B,A;\phi_1,\ldots,\phi_m)$ is triangulable, then $n$-th Betti number $b_n(\Phi^{(\sv)}(A))$ is eventually polynomial in $\sv$ by Corollary~\ref{cor:betti}.

\smallskip
This leads to the question:\ if $(B, A;\phi_1,\ldots,\phi_m)$ is a non-triangulable topological dynamical system, can the Betti numbers $b_n(\Phi^{(\sv)}(A))$ fail to grow eventually polynomially? The following example shows that even for relatively nice topological dynamical systems, this can fail. Let $A\subseteq \R^2$ be the circle of radius $\sqrt{2}/2$, centered at the point $(0,1)$. Take $\alpha \in (0,\pi/2)$ such that $\alpha/\pi$ is irrational, and let $\phi\colon \R^2\to \R^2$ be counterclockwise rotation by $\alpha$ about the origin. We consider the (clearly non-triangulable) system $(\R^2,A;\id,\phi)$, pictured below, where $\id$ is the identity map. 
\begin{center}
\begin{tikzpicture}
\node at (-4,0) {$A$};
\draw(0-4,1) circle (0.70710) ;

\draw[thick,above, ->] (-2.8,1) to node[below]{$(\id,\phi)$} (-2.3,1);

\node at (0.5,0) {$A\cup \phi(A)$};
\node at (-0.472161,1.52637) {\tiny $\bullet$};
\node at (-0.0924813,0.298967) {\tiny $\bullet$};
\draw(0,1) circle (0.70710);
\draw(-0.564642, 0.825336) circle (0.70710);
\draw[thick, ->] (0,2) arc (90:90+34.38:2) node[midway,above] {$\alpha$};

\draw[thick,above, ->] (1.2,1) to node[below]{$(\id,\phi)$} (1.7,1);

\node at (4-0.70636,1.03248) {\tiny $\bullet$};
\node at (4-0.225679,0.329874) {\tiny $\bullet$};
\node at (4-1.25154,0.993164) {\tiny $\bullet$};
\node at (4-0.245138,0.194529) {\tiny $\bullet$};
\draw(4,1) circle (.707);
\draw(4-0.564642, 0.825336) circle (.707);
\draw(4-0.932039, 0.362358) circle (.707);
\draw[thick, ->] (4,2) arc (90:90+34.38+34.38:2) node[midway,above] {$2\alpha\phantom{..}$};

\draw[thick,above, ->] (5.2,1) to node[below]{$(\id,\phi)$} (5.7,1);

\node at (8-1.16597, 0.453305) {\tiny $\bullet$};
\node at (8-0.372522, 0.144829) {\tiny $\bullet$};
\node at (8-1.59373, 0.113019) {\tiny $\bullet$};
\node at (8-0.31216, 0.0221368) {\tiny $\bullet$};
\draw(8,1) circle (.707);
\draw(8-0.564642, 0.825336) circle (.707);
\draw(8-0.932039, 0.362358) circle (.707);
\draw(8-0.973848, -0.227202) circle (.707);
\draw[thick, ->] (8,2) arc (90:90+34.38+34.38+34.38:2) node[midway,above] {$3\alpha\phantom{...}$};
\end{tikzpicture}
\end{center}

\smallskip
For each $t$, put $A_t\coloneqq A\cup\cdots \cup \phi^t(A)$. We apply Mayer-Vietoris to $A_{t+1} = A_t \cup \phi^{t+1}(A)$, using that $H_1(A_t \cap \phi^{t+1}(A)) = 0$, to get an exact sequence
\[
0 \to H_1(A_t)\oplus H_1(A) \to H_1(A_{t+1}) \to H_0(A_t \cap \phi^{t+1}(A)) \to H_0(A_t)\oplus H_0(A) \to H_0(A_{t+1}) \to 0.
\]
With the exception of $H_1(A_{t+1})$, $H_1(A_t)$, and $H_0(A_t \cap \phi^{t+1}(A))$, all the homology groups above have rank 1, so we compute that
\begin{equation}\label{eq:betti}
b_1(A_{t+1}) - b_1(A_t)=b_0(A_t \cap \phi^{t+1}(A)).
\end{equation}
The intersection $A_t \cap \phi^{t+1}(A)$ is indicated by dots in the picture above. To determine the number of connected components in this intersection (that is, zeroth Betti number), let us first compute the intersection of two circles $\phi^i(A)$ and $\phi^j(A)$. If the angle between these two circles is less than $\pi/2$ in absolute value, then $\phi^i(A)$ and $\phi^j(A)$ intersect in exactly two points. If the angle is larger than $\pi/2$, then $\phi^i(A)$ and $\phi^j(A)$ are disjoint. The case that $\phi^i(A)$ and $\phi^j(A)$ intersect in exactly one point never occurs, as $\alpha/\pi$ is irrational. It follows that for $t$ large enough, the Betti number $b_0(A_t \cap \phi^{t+1}(A))$ is an even number which is approximately equal to $t+1$. In particular $b_0(A_t \cap \phi^{t+1}(A))$ is \emph{not} eventually polynomial in $t$. Using (\ref{eq:betti}), we see that $b_1(A_t)$ can not be eventually polynomial in $t$ either. 

\subsection{The graphic matroid}\label{subsec:counterexample}
Let $\cG$ be an undirected graph with vertex set $V(\cG)$ and edge set $E(\cG)$. Then $\cG$ is a simplicial complex with only zero and one-dimensional simplices. The rank $\rk_0$ corresponds to the cardinality of a set of vertices, the rank $\rk_1$ gives the cardinality of a set of edges, and $\rk_1^\partial$ is the rank with respect to the graphic matroid on $E(\cG)$. It follows from Corollary~\ref{cor:betti} that for a finite subgraph $A \subseteq \cG$ and commuting graph endomorphisms $\phi_1,\ldots,\phi_m$ of $\cG$, the first two Betti numbers $b_0(\Phi^{(\sv)}(A))$ and $b_1(\Phi^{(\sv)}(A))$ are eventually polynomial in $\sv$.

\smallskip
Using the graphic matroid, we can construct an example that shows that our assumption in Theorem~\ref{thm:hilbert} that each $\Phi_i$ is triangular is, in some sense, necessary. Consider the following graph $\cG$:

\begin{center}
\begin{tikzpicture}
	\node(A) at (0,0){$\bullet$};
	\node(B) at (.75,2){$\bullet$};
	\node(B') at (1.5,-1){$\bullet$};
	\node(C) at (3,0){$\bullet$};
	\node(C') at (2.25,2){$\bullet$};
	\node(D) at (3.75,2){$\bullet$};
	\node(D') at (4.5,-1){$\bullet$};
	\node(E) at (6,0){$\bullet$};
	\node(E') at (5.25,2){$\bullet$};
	\node(F) at (6.75,2){$\bullet$};
	\node(F') at (7.5,-1){$\bullet$};
	\node(G) at (9,0){$\bullet$};
	\node(G') at (8.25,2){$\bullet$};
	\node(H) at (10,0){$\cdots$};
	\tikzset{EdgeStyle/.style = {bend left=20}}
	
	\draw[EdgeStyle,thick,above,red](A) to node{$a_0$} (C) ;
	\draw[EdgeStyle,thick,above,blue](C) to node{$c_0$} (B') ;
	\draw[EdgeStyle,thick,above,blue](B') to node{$b_0$} (A);
	\draw[EdgeStyle,thick,right,red](A) to node{$a_1$} (B) ;
	\draw[EdgeStyle,thick,above,blue](B) to node{$b_1$} (C');
	\draw[EdgeStyle,thick,left,blue](C') to node{$c_1$} (C);
	
	\draw[EdgeStyle,thick,above,red](C) to node{$a_2$} (E) ;
	\draw[EdgeStyle,thick,above,blue](E) to node{$c_2$} (D') ;
	\draw[EdgeStyle,thick,above,blue](D') to node{$b_2$} (C);
	\draw[EdgeStyle,thick,right,red](C) to node{$a_3$} (D) ;
	\draw[EdgeStyle,thick,above,blue](D) to node{$b_3$} (E');
	\draw[EdgeStyle,thick,left,blue](E') to node{$c_3$} (E);
	
	\draw[EdgeStyle,thick,above,red](E) to node{$a_4$} (G) ;
	\draw[EdgeStyle,thick,above,blue](G) to node{$c_4$} (F') ;
	\draw[EdgeStyle,thick,above,blue](F') to node{$b_4$} (E);
	\draw[EdgeStyle,thick,right,red](E) to node{$a_5$} (F) ;
	\draw[EdgeStyle,thick,above,blue](F) to node{$b_5$} (G');
	\draw[EdgeStyle,thick,left,blue](G') to node{$c_5$} (G);

	\draw[thick,red](G) to node{} (9.5,.15) ;
	\draw[thick,blue](9.3,-.3) to node{} (G);
	\draw[thick,red](G) to node{} (9.1,.5) ;

\end{tikzpicture}
\end{center}

Let $\phi$ be the map sending $a_i$ to $a_{i+1}$, sending $b_i$ to $b_{i+1}$, and sending $c_i$ to $c_{i+1}$ for each $i$. Let $\cl$ be the closure operator associated to the graphic matroid on $E(\cG)$ (explicitly, an edge $a$ is in the closure of a set of edges $B$ if there are $b_1,\ldots,b_n \in B$ which connect the endpoints of $a$), and let $\rk$ be the corresponding rank function. One can check that $\phi a \in \cl(B\phi B)$ whenever $a \in \cl(B)$, so $\phi$ is a quasi-endomorphism of the graphic matroid $E(\cG)$. We have
\[
\rk(\phi^t(a_0,b_0,c_0))=\rk(a_t,b_t,c_t)=\left\{
\begin{array}{ll}
2 & \mbox{if $t$ is even}\\
3 & \mbox{if $t$ is odd},
\end{array}\right.
\]
so the conclusion of Theorem~\ref{thm:hilbert} fails for the single map $\phi$. Of course, since $\phi$ is a quasi-endomorphism, Corollary~\ref{cor:kolchin} tells us that $\rk(\{a_i,b_i,c_i:i\leq t\})$ is eventually polynomial in $t$, as can easily be verified.

\section{Applications III:\ Difference-differential fields}\label{sec:dfields}
\noindent
In this final section, we examine endomorphisms of and derivations on (expansions of) fields. Recall that a \textbf{derivation} on a field $K$ is a map $\delta\colon K\to K$ that satisfies the identities
\[
\delta(a+b)=\delta a + \delta b,\qquad \delta(ab)=a\delta b + b\delta a
\]
for all $a,b \in K$.

\subsection{Kolchin polynomials for difference-differential fields}
Let $K$ be a field of characteristic zero. We let $\acl$ be algebraic closure in $K$, so for $A \subseteq K$, the set $\acl(A)$ consists of all $a \in K$ which are algebraic over the field $\Q(A)$. Then $(K,\acl)$ is a finitary matroid, and the rank of $A\subseteq K$ with respect to $\acl$ is equal to $\trdeg_\Q\Q(A)$, the transcendence degree of $\Q(A)$ over $\Q$. Clearly, any field endomorphism $\sigma\colon K\to K$ is an endomorphism of $(K,\acl)$.

\begin{lemma}\label{lem:derisquasi}
Let $\delta$ be a derivation on $K$. Then $\delta$ is a quasi-endomorphism of $(K,\acl)$. 
\end{lemma}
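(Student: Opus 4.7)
The plan is to verify directly that $a \in \acl(B)$ implies $\delta a \in \acl(B \cup \delta B)$, by differentiating the minimal polynomial of $a$ over $\Q(B)$.

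First, by finite character of $\acl$, I can find a finite $B_0 \subseteq B$ such that $a$ is algebraic over $\Q(B_0)$. Let $P(X) \in \Q(B_0)[X]$ be the minimal polynomial of $a$ over $\Q(B_0)$; clearing denominators, I may take the coefficients to lie in $\Q[B_0]$. Write
\[
P(X) = c_n X^n + c_{n-1}X^{n-1} + \cdots + c_0,
\]
with $c_i \in \Q[B_0]$.

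Next, I apply $\delta$ to the identity $P(a) = 0$. Using the Leibniz rule and additivity, this yields
\[
0 = \delta(P(a)) = P'(a)\,\delta a + P^\delta(a),
\]
where $P^\delta(X) \coloneqq \sum_i (\delta c_i) X^i$ is obtained by applying $\delta$ to the coefficients of $P$ (treating $X$ as a constant), and $P'(X)$ is the usual $X$-derivative. Because $K$ has characteristic zero and $P$ is irreducible over $\Q(B_0)$, we have $P'(a) \neq 0$, so
\[
\delta a = -\frac{P^\delta(a)}{P'(a)}.
\]

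Finally, I observe that the right-hand side lies in $\acl(B \cup \delta B)$. Indeed, $P^\delta(a)$ and $P'(a)$ are polynomial expressions in $a$ with coefficients in $\Q(B_0 \cup \delta B_0) \subseteq \Q(B \cup \delta B)$, so $\delta a \in \Q(B \cup \delta B)(a)$. Since $a \in \acl(B) \subseteq \acl(B \cup \delta B)$, the extension $\Q(B \cup \delta B)(a)$ is algebraic over $\Q(B \cup \delta B)$, hence contained in $\acl(B \cup \delta B)$. This gives $\delta a \in \acl(B \cup \delta B)$, as required.

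No step is a real obstacle; the only subtlety worth flagging is the use of characteristic zero to ensure $P'(a) \neq 0$, which is what forces us to stay in this setting rather than in positive characteristic.
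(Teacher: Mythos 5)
Your proof is correct and follows essentially the same route as the paper: differentiate a minimal-degree relation $P(a)=0$ with $P$ over $\Q[B]$, use characteristic zero to get $P'(a)\neq 0$, and solve for $\delta a$ in terms of $a$, $B$, and $\delta B$. The only cosmetic differences are that you spell out the finite-character reduction and the clearing of denominators, which the paper leaves implicit.
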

\begin{proof}
Let $a\in K$ and $B \subseteq K$ with $a \in \acl(B)$. Let $P$ be a polynomial of minimal degree witnessing this, so $P(a) = 0$ and $P'(a) \neq 0$. Write $P(X) = \sum_db_dX^d$, where each $b_d \in \Q[B]$. We have
\[
0 =\delta P(a)=\sum_d\delta(b_da^d) =\sum_d\Big(\delta(b_d)a^d +db_da^{d-1}\delta a\Big) = \sum_d\delta(b_d)a^d +P'(a)\delta a,
\]
so $\delta a\in \acl(aB\delta B)\subseteq \acl(B\delta B)$, as desired. 
\end{proof}

Suppose now that $\Phi = (\phi_1,\ldots,\phi_m)$ is a collection of commuting maps $K\to K$, each of which is either a derivation or a field endomorphism (hence, a quasi-endomorphism of $(K,\acl)$ by Lemma~\ref{lem:derisquasi}). Then $(K,\phi_1,\ldots,\phi_m)$ is called a \textbf{difference-differential field} (or a \textbf{$\dif$-field} for short). Let $F$ be a $\dif$-subfield of $K$, that is, a subfield of $K$ which is closed under each $\phi_i$. Then each $\phi_i$ is a quasi-endomorphism of the relativization $(K,\acl_F)$ of $(K,\acl)$, and the rank of a subset $A \subseteq K$ with respect to $\acl_F$ coincides with $\trdeg_FF(A)$. Applying Corollary~\ref{cor:kolchin}, Proposition~\ref{prop:invariant}, and Corollary~\ref{cor:kolchincoefficient} to $(K,\acl_F)$, we get the following:

\begin{corollary}\label{cor:ddfields}
Let $\av$ be a tuple from $K$. Then there is a polynomial $Q^\Phi_{\av} \in \Q[\Y]$ such that
\[
\trdeg_FF(\Phi^{\preceq(\sv)}(\av))=Q^\Phi_{\av}(\sv)
\]
whenever $\min(\sv)$ is sufficiently large. Moreover:
\begin{enumerate}[(i)]
\item The dominant terms of $Q^\Phi_{\av}$ only depend on the algebraic closure of the $\dif$-field extension $F(\Theta(\av))$, that is, if $\bv \in K$ and if $F(\Theta(\av))$ and $F(\Theta(\bv))$ have the same algebraic closure in $K$, then $Q^\Phi_{\av}$ and $Q^\Phi_{\bv}$ have the same dominant terms.
\item The coefficient of $\Y^\dv$ in $Q^\Phi_{\av}$ times $d_1!\cdots d_k!$ is equal to the maximal size of a subset $B\subseteq\acl_F(\Theta(\av))$ such that the tuple $(\theta b)_{\theta \in \Theta,b \in B}$ is algebraically independent over $F$.
\end{enumerate}
\end{corollary}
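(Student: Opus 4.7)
The proof is a direct application of the abstract machinery of Sections~\ref{sec:hilbert} and~\ref{sec:phiclosure} to the localized matroid $(K,\acl_F)$. The plan is: (1) verify that each $\phi_i$ is a quasi-endomorphism of $(K,\acl_F)$, so that $\Phi$ (and each $\Phi_i$) is quasi-triangular in this matroid; (2) invoke Corollary~\ref{cor:kolchin} to get existence of the polynomial; (3) deduce (i) from Proposition~\ref{prop:invariant}; (4) combine Theorem~\ref{thm:phiclosure} and Corollary~\ref{cor:kolchincoefficient} to obtain (ii).

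First I would note that $(K,\acl_F)$ is a finitary matroid whose associated rank coincides with $\trdeg_F$, and that since $F$ is a $\dif$-subfield, $\phi_i(F)\subseteq F$ for each $i$. To check that each $\phi_i$ is a quasi-endomorphism of $(K,\acl_F)$: if $\phi_i$ is a field endomorphism it is even an endomorphism of $(K,\acl)$, and if $\phi_i$ is a derivation then Lemma~\ref{lem:derisquasi} shows it is a quasi-endomorphism of $(K,\acl)$. In either case, the inclusion $\phi_i(F)\subseteq F$ transfers the property: if $a\in\acl_F(B)=\acl(FB)$, then
\[
\phi_i(a)\in\acl(FB\cup\phi_i(F)\cup\phi_i(B))\subseteq\acl(FB\cup\phi_i(B))=\acl_F(B\phi_i(B)).
\]
Since every $\phi_i$ is a quasi-endomorphism, each sub-tuple $\Phi_i$ is quasi-triangular in $(K,\acl_F)$, and Corollary~\ref{cor:kolchin} applied with $A$ equal to the finite set of entries of $\av$ and $B=\emptyset$ produces the polynomial $Q^\Phi_{\av}:=Q^\Phi_{A|\emptyset}$ with $\trdeg_F F(\Phi^{\preceq(\sv)}(\av))=Q^\Phi_{\av}(\sv)$ for $\min(\sv)$ large.

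For (i), I would apply Proposition~\ref{prop:invariant} in $(K,\acl_F)$, taking $A,A'$ to be the entries of $\av,\bv$ and the auxiliary set $B=\emptyset$ (which trivially satisfies $\Theta(\emptyset)=\emptyset$). The hypothesis $\acl_F(\Theta(A))=\acl_F(\Theta(A'))$ translates precisely to $\acl(F(\Theta(\av)))=\acl(F(\Theta(\bv)))$ in $K$, i.e.\ to the two $\dif$-field extensions having the same algebraic closure. For (ii), Theorem~\ref{thm:phiclosure} applied to $\Phi$ in $(K,\acl_F)$ gives
\[
Q^\Phi_{\av}(\Y)=\frac{\rk^{\Phi_*}(\av)}{d_1!\cdots d_k!}\,\Y^\dv+\text{lower degree terms},
\]
and Corollary~\ref{cor:kolchincoefficient} (with $B=\emptyset$, so $\Theta(B)=\emptyset$) identifies $\rk^{\Phi_*}(\av)$ with the maximum size of $B\subseteq\acl_F(\Theta(\av))$ for which $(\theta b)_{\theta\in\Theta,\,b\in B}$ is $\acl_F$-independent over $\emptyset$, i.e.\ algebraically independent over $F$.

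The main (relatively mild) obstacle is the bookkeeping of moving between the ambient matroid $(K,\acl)$ and its localization $(K,\acl_F)$: one must check that the (quasi-)endomorphism property survives localization (which uses $\phi_i(F)\subseteq F$) and that Proposition~\ref{prop:invariant} is being applied with the correct auxiliary set. The genuinely substantive input was isolated in Lemma~\ref{lem:derisquasi}, whose observation that derivations are only quasi-endomorphisms—not endomorphisms—is the structural reason one must use Corollary~\ref{cor:kolchin} (and the cumulative polynomial) rather than Theorem~\ref{thm:hilbert} in the differential setting.
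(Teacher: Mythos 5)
Your proposal is correct and follows essentially the same route as the paper: the paper's own justification before Corollary~\ref{cor:ddfields} is precisely ``apply Corollary~\ref{cor:kolchin}, Proposition~\ref{prop:invariant}, and Corollary~\ref{cor:kolchincoefficient} to $(K,\acl_F)$,'' after noting via Lemma~\ref{lem:derisquasi} that each $\phi_i$ is a quasi-endomorphism and that this localizes because $\phi_i(F)\subseteq F$. You are a bit more explicit than the paper in spelling out the localization step and in citing Theorem~\ref{thm:phiclosure} for the leading-coefficient formula in (ii), but that is only making implicit dependencies visible, not a different argument.
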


The case when $k= 1$ and each $\phi_i$ is a derivation was established by Kolchin~\cite{Ko64}. The multivariate differential case (each $\phi_i$ is a derivation but $k$ is arbitrary) was shown by Levin~\cite{Le01}. The case where each $\phi_i$ is an endomorphism is also due to Levin~\cite{Le04}, as is the most general case to date:\ where each $\phi_i$ may be either a derivation or an endomorphism, but each part of the partition $\Phi_i$ must consist of only derivations or endomorphisms; see~\cite{Le20}. Our Corollary~\ref{cor:ddfields} is slightly more general than the result in~\cite{Le20}, since the parts of the partition can consist of both derivations and endomorphisms.

\subsection{Modules over a difference-differential field}
Let $(K,\Phi)$ be a difference-differential field, and let $K[\Phi]$ denote the ring
of linear difference-differential operators over $K$, so $K[\Phi]$ consists of
elements $\sum_{\rv}\lambda_{\rv}\phi^{\rv}$ 
with finitely many nonzero coefficients~$\lambda_{\rv}$. 
The multiplication on $K[\Phi]$ satisfies the identity 
\[
\phi_i\lambda=\left\{
\begin{array}{ll}
\phi_i(\lambda)\phi_i & \mbox{ if $\phi_i$ is an endomorphism of $K$}\\
\phi_i(\lambda)+ \lambda\phi_i & \mbox{ if $\phi_i$ is a derivation on $K$},
\end{array}
\right.
\]
for $i \in \{1,\ldots,m\}$ and $\lambda \in K$. Let $V$ be a left $K[\Phi]$-module. For each $i$ and each $v \in V$, we let $\phi_i(v)\coloneqq \phi_i\cdot v$, so each $\phi_i$ is a map $V\to V$.

\begin{lemma}
Let $\cl$ denote the $K$-linear closure operator on $V$ and let $i \in \{1,\ldots,m\}$. If $\phi_i$ is an endomorphism of $K$, then the map $v \mapsto \phi_i(v)$ is an endomorphism of $(V,\cl)$. If $\phi_i$ is a derivation on $K$, then the map $v \mapsto \phi_i(v)$ is a quasi-endomorphism of $(V,\cl)$.
\end{lemma}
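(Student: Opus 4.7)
The proof is essentially a direct computation using the multiplication rules in $K[\Phi]$ stated just above the lemma. The plan is to unpack what $v \in \cl(B)$ means: since $\cl$ is the $K$-linear span in $V$, this is equivalent to being able to write $v = \sum_{j=1}^n \lambda_j b_j$ for some finite collection of $\lambda_j \in K$ and $b_j \in B$. I would then compute $\phi_i(v) = \phi_i \cdot v$ in the module structure by pushing the operator $\phi_i$ past each coefficient $\lambda_j$ using the appropriate commutation identity in $K[\Phi]$, and read off from the resulting expression that $\phi_i(v)$ lies in the required $K$-linear span.

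In the endomorphism case I would apply $\phi_i \lambda_j = \phi_i(\lambda_j)\phi_i$ term by term to obtain
\[
\phi_i(v) \;=\; \phi_i \cdot \sum_j \lambda_j b_j \;=\; \sum_j \phi_i(\lambda_j)\, \phi_i \cdot b_j \;=\; \sum_j \phi_i(\lambda_j)\, \phi_i(b_j),
\]
which is a $K$-linear combination of elements of $\phi_i(B)$, hence $\phi_i(v)\in \cl(\phi_i(B))$. In the derivation case I would instead apply $\phi_i \lambda_j = \phi_i(\lambda_j) + \lambda_j \phi_i$ to obtain
\[
\phi_i(v) \;=\; \sum_j \phi_i(\lambda_j)\, b_j \;+\; \sum_j \lambda_j\, \phi_i(b_j),
\]
which is a $K$-linear combination of elements of $B\cup\phi_i(B)$, so $\phi_i(v)\in \cl(B\cup\phi_i(B))$; this is the quasi-endomorphism condition.

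There is essentially no obstacle here: the two commutation identities hold in $K[\Phi]$ by construction, and a $K$-linear span is manifestly closed under $K$-linear combinations of its members. The only small care needed is to respect the module-action structure, which is immediate from the associativity of the left action of $K[\Phi]$ on $V$. No hypothesis on $V$ beyond being a left $K[\Phi]$-module is required.
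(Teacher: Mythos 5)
Your proposal is correct and matches the paper's argument essentially verbatim: both unpack $v\in\cl(B)$ as a finite $K$-linear combination of elements of $B$, push $\phi_i$ through each term using the appropriate commutation rule in $K[\Phi]$, and read off that the result lies in the span of $\phi_i(B)$ (endomorphism case) or $B\cup\phi_i(B)$ (derivation case).
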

\begin{proof}
Let $v_1,\ldots,v_n\in V$ and suppose that $v$ is in the $K$-linear span of $v_1,\ldots,v_n$. Take scalars $\lambda_1,\ldots,\lambda_n \in K$ with $v = \sum_{j= 1}^n\lambda_j \cdot v_j$. If $\phi_i$ is an endomorphism of $K$, then 
\[
\phi_i(v) =\sum_{j= 1}^n\phi_i(\lambda_j \cdot v_j)=\sum_{j= 1}^n\phi_i(\lambda_j)\cdot \phi_i(v_j),
\]
so $\phi_i(v)$ is in the $K$-linear span of $\phi_i(v_1),\ldots,\phi_i(v_n)$. If $\phi_i$ is a derivation on $K$, then 
\[
\phi_i(v) =\sum_{j= 1}^n\phi_i(\lambda_j \cdot v_j)=\sum_{j= 1}^n\phi_i(\lambda_j)\cdot v_j+ \lambda_j\cdot \phi_i(v_j),
\]
so $\phi_i(v)$ is in the $K$-linear span of $v_1,\ldots,v_n,\phi_i(v_1),\ldots,\phi_i(v_n)$.
\end{proof}

We now associate to $K[\Phi]$ an $\N^k$-filtration as follows:\ for $\sv \in \N^k$, let $K[\Phi]_{\sv}$ consist of the linear operators $\sum_{\Vert\rv\Vert \preceq \sv}\lambda_{\rv}\phi_1^{r_1}\cdots\phi_m^{r_m}$. Let $V = \bigcup_{\sv \in \Z^k}V_{\sv}$ be a filtered $K[\Phi]$-module. Following Johnson~\cite{Jo69}, we say that the filtration $(V_\sv)_{\sv \in \Z^k}$ is \textbf{excellent} if each $V_{\sv}$ is finite-dimensional as a $K$-vector space and if there is $\sv_0 \in \Z^k$ such that 
\[
V_\sv = \{D(v):D \in K[\Phi]_{\sv-\sv_0}\text{ and }v \in V_{\sv_0}\}
\]
for all $\sv \succeq \sv_0$.

\begin{corollary}\label{cor:johnson}
Let $V=\bigcup_{\sv \in \Z^k}V_{\sv}$ be an excellently filtered $K[\Phi]$-module. There is a polynomial $Q \in \Q[\Y]$ such that
\[
\dim_K(V_\sv)=Q(\sv)
\]
whenever $\min(\sv)$ is sufficiently large.
\end{corollary}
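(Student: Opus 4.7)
The plan is to reduce Corollary~\ref{cor:johnson} directly to Corollary~\ref{cor:kolchin}, applied to the finitary matroid $(V,\cl)$ where $\cl$ denotes $K$-linear span in $V$. The ingredients are essentially already in place: the lemma immediately preceding the statement shows that each $\phi_i$ acts as a (quasi-)endomorphism of $(V,\cl)$---an endomorphism when $\phi_i$ is a field endomorphism of $K$, a quasi-endomorphism when $\phi_i$ is a derivation on $K$. Since the preliminaries note that any tuple of quasi-endomorphisms is automatically quasi-triangular, each part $\Phi_i$ of the partition is a quasi-triangular system on $(V,\cl)$.

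Next I would unwind the excellence hypothesis. Fix $\sv_0 \in \Z^k$ as in its definition, and pick a finite $K$-basis $A$ of the finite-dimensional vector space $V_{\sv_0}$. Since $K[\Phi]_{\uv}$ is by definition the $K$-linear span of the operators $\{\phi^{\rv}:\Vert\rv\Vert\preceq\uv\}=\Phi^{\preceq(\uv)}$, excellence implies that for $\sv \succeq \sv_0$ the piece $V_{\sv}$ is precisely the $K$-linear span of $\Phi^{\preceq(\sv-\sv_0)}(A)$. Consequently,
\[
\dim_K V_{\sv} \;=\; \rk\!\big(\Phi^{\preceq(\sv-\sv_0)}(A)\big),
\]
where $\rk$ is the rank function associated to $(V,\cl)$.

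Now Corollary~\ref{cor:kolchin} applied to the pair $A$, $B=\emptyset$ yields a polynomial $Q^\Phi_{A|\emptyset}\in\Q[\Y]$ with $\rk(\Phi^{\preceq(\uv)}(A))=Q^\Phi_{A|\emptyset}(\uv)$ whenever $\min(\uv)$ is sufficiently large. Setting $Q(\Y)\coloneqq Q^\Phi_{A|\emptyset}(\Y-\sv_0)$---still a polynomial in $\Y$, since polynomials are closed under argument shifts---completes the argument: once $\min(\sv)$ is large enough one simultaneously has $\sv \succeq \sv_0$ and $\min(\sv-\sv_0)$ as large as needed, so $\dim_K V_{\sv}=Q(\sv)$.

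There is no real obstacle here; the only conceptual step is recognizing that excellence packages $V_{\sv}$ as the $K$-linear span of $\Phi^{\preceq(\sv-\sv_0)}(A)$ for a fixed finite $A$, at which point Corollary~\ref{cor:kolchin} applies essentially verbatim.
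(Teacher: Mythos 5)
Your proof is correct and follows essentially the same route as the paper: pass to the $K$-linear matroid on $V$, use excellence to write $V_{\sv}$ as the span of $\Phi^{\preceq(\sv-\sv_0)}(A)$ for a fixed finite $A\subseteq V_{\sv_0}$, and invoke Corollary~\ref{cor:kolchin}. The only cosmetic difference is that the paper re-indexes to assume $\sv_0=\zero_k$ whereas you carry the shift explicitly.
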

\begin{proof}
Let $(X,\cl)$ be the underlying set of $V$ with the $K$-linear closure operator. Since $V$ is excellent, we can find an index $\sv_0$ and a finite set $A \subseteq V_{\sv_0}$ such that the $K[\Phi]$-module $\bigcup_{\sv\succeq \sv_0} V_\sv$ is generated by $A$. By re-indexing, we may assume that $\sv_0 =\zero_k$. Then for each $\sv\in \N^k$, the graded part $V_{\sv}$ is generated as a $K$-vector space by 
\[
\Phi^{\preceq(\sv)}(A)=\{\phi^{\rv}(a):\Vert \rv\Vert \preceq \sv\text{ and }a\in A\}.
\] 
Thus, $\dim_K(M_{\sv})=\rk(\Phi^{\preceq(\sv)}(A))$ for $\sv \in \N^k$. The corollary follows from Corollary~\ref{cor:kolchin}.
\end{proof}

\begin{remark}
In the case that each $\phi_i$ is a derivation on $K$ and $k = 1$, Corollary~\ref{cor:johnson} is due to Johnson~\cite{Jo69}, who used this result to establish the existence of the Kolchin polynomial. Variations of this result were later given by Levin; see~\cite{Le78} for the case where each $\phi_i$ is a field automorphism. The dimension polynomial for modules over rings of differential operators also appears in work of Bernshtein~\cite{Be71}.
\end{remark}

\subsection{Higher derivations and $\cD$-fields}
Let $K$ be a field. 
A \textbf{higher derivation on $K$} is a tuple $\Delta = (\delta_0,\delta_1,\ldots,\delta_m)$ where $\delta_0\colon K\to K$ is the identity map and where $\delta_i\colon K\to K$ is an additive map satisfying the identity
\[
\delta_i(xy)=\sum_{j\leq i}\delta_j(x)\delta_{i-j}(y)
\]
for each $i \leq m$; see~\cite[Section 27]{Ma86}. Note that $\delta_1$ is a classical derivation on $K$. We may relax the assumption that $\delta_0$ is the identity map and ask only that it is an endomorphism. In this case, we call $\Delta$ a \textbf{twisted higher derivation on $K$}. 

We will consider higher derivations as a particular case of $\cD$-fields in the
sense of \cite{MS14}. For the remainder of this subsection, fix a subfield $A$ of $K$, and let $\Delta = (\delta_0,\delta_1,\ldots,\delta_m)$ be a tuple of
$A$-linear maps from $K$ to~$K$. 
We say that 
$(K, \Delta)$ is a 
\textbf{twisted $\cD$-field} 
if $\delta_{0}$ is an endomorphism of $K$ and, for every
$k = 1, \ldots, m$, there are coefficients $a_{i,j,k} \in A$ such that
\begin{equation}\label{eq:mult-Dring}
\delta_{k}(xy) = \sum_{i,j\leq m} a_{i,j,k} \delta_{i}(x) \delta_{j}(y).
\end{equation}
We refer the reader to~\cite{MS14} for a more precise description of $\cD$-fields. We do note that in~\cite{MS14}, the coefficients $a_{i,j,k}$ only depend on the ring scheme $\cD$, and that the first map $\delta_0$ is required to be the identity map, not just a ring endomorphism. This is why we refer to $(K,\delta_0,\ldots,\delta_m)$ as a ``twisted'' $\cD$-field. 

We say that the twisted $\cD$-field $(K, \Delta)$ is \textbf{pyramidal} if the coefficients $a_{i,j,k}$ satisfy the following additional conditions:
\begin{enumerate}
\item $a_{k,0,k} = a_{0,k,k} = 1$;
\item if $i>0$, then $a_{k,i,k} = a_{i,k,k} = 0$;
\item if $i>k$ or $j>k$, then $a_{i,j,k} = 0$.
\end{enumerate}
If $(K, \Delta)$ is a pyramidal twisted $\cD$-field, then we can rewrite \eqref{eq:mult-Dring} as
\[
\delta_{0}(xy) = \delta_0(x)\delta_{0}(y),\qquad \delta_{k}(xy) = \delta_{k}(x) \delta_{0}(y) + \sum_{i,j<k} a_{i,j,k} \delta_{i}(x) \delta_{j}(y)+\delta_{0}(x) \delta_{k}(y)\quad\text{ for }k=1, \dotsc, m.
\]
The following lemma can be established via a straightforward induction on $k$ and $d$:

\begin{lemma}\label{lem:polyD}
Let $(K,\Delta)$ be a pyramidal twisted $\cD$-field. Then for each $0<k\leq m$ and each $d \in \N$, there is a polynomial $R_{k,d}$ over $A$ such that
\[
\delta_k(a^d)=d\delta_0(a)^{d-1}\delta_k(a)+ R_{k,d}(\delta_0a,\ldots,\delta_{k-1}a).
\]
for all $a \in K$.
\end{lemma}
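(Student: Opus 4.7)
The plan is a double induction on $k$ (outer, strong induction) and $d$ (inner, starting at $d = 1$); we work with $k \geq 1$, since for $k = 0$ the pyramidal conditions force $\delta_0(a^d) = \delta_0(a)^d$ directly. The inner base case $d = 1$ is immediate with $R_{k,1} = 0$, as the claim reduces to $\delta_k(a) = 1 \cdot \delta_0(a)^{0}\delta_k(a)$.

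For the inductive step at fixed $k \geq 1$ and $d \geq 2$, I would write $a^d = a \cdot a^{d-1}$ and apply the pyramidal product rule to obtain
\[
\delta_k(a^d) = \delta_k(a)\delta_0(a^{d-1}) + \sum_{i,j<k} a_{i,j,k}\,\delta_i(a)\,\delta_j(a^{d-1}) + \delta_0(a)\delta_k(a^{d-1}).
\]
Into this I would substitute three ingredients: (i) $\delta_0(a^{d-1}) = \delta_0(a)^{d-1}$, since $\delta_0$ is a ring endomorphism; (ii) the inner inductive hypothesis
\[
\delta_k(a^{d-1}) = (d-1)\delta_0(a)^{d-2}\delta_k(a) + R_{k,d-1}(\delta_0 a,\ldots,\delta_{k-1}a);
\]
and (iii) for each $j < k$, the outer inductive hypothesis rewriting $\delta_j(a^{d-1})$ as a polynomial over $A$ in $\delta_0 a,\ldots,\delta_j a$, hence in $\delta_0 a,\ldots,\delta_{k-1}a$.

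Collecting terms, the coefficient of $\delta_k(a)$ becomes $\delta_0(a)^{d-1} + (d-1)\delta_0(a)^{d-1} = d\,\delta_0(a)^{d-1}$, exactly as required. Every remaining contribution---the piece $\delta_0(a)\,R_{k,d-1}(\delta_0 a,\ldots,\delta_{k-1} a)$ together with each summand $a_{i,j,k}\,\delta_i(a)\,\delta_j(a^{d-1})$ with $i,j<k$---is a polynomial over $A$ in the variables $\delta_0 a,\ldots,\delta_{k-1} a$. Their sum provides the desired $R_{k,d}$.

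The only (mild) obstacle is pure bookkeeping: one must verify that no $\delta_k(a)$ leaks into $R_{k,d}$ and that the coefficients remain in $A$. Both are ensured by the pyramidal conditions (1)--(3), which were designed precisely so that the only appearances of $\delta_k$ on the right-hand side of the product rule are the symmetric outer pair $\delta_k(x)\delta_0(y) + \delta_0(x)\delta_k(y)$; this is what makes the top-order contribution collapse neatly to $d\,\delta_0(a)^{d-1}\delta_k(a)$.
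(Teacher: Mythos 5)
Your argument is precisely the ``straightforward induction on $k$ and $d$'' the paper alludes to but does not write out: the decomposition $a^d = a\cdot a^{d-1}$, the substitution of $\delta_0(a^{d-1}) = \delta_0(a)^{d-1}$ and the two inductive hypotheses, and the collapse of the $\delta_k(a)$-coefficient to $d\,\delta_0(a)^{d-1}$ is exactly the natural route, and you correctly identify that the pyramidal conditions are what keep $\delta_k$ from ``leaking'' into $R_{k,d}$. One small point to fold in: the paper's convention has $0\in\N$, so the inner induction should start at $d=0$; this case holds because $\delta_0(1)=1$ and (by an easy induction on $k$) $\delta_k(1) = -\sum_{i,j<k}a_{i,j,k}\,\delta_i(1)\delta_j(1) \in A$, so $R_{k,0}$ can be taken to be the constant $\delta_k(1)$.
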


\begin{lemma}\label{lem:higherderiv}
Let $(K,\Delta)$ be a pyramidal twisted $\cD$-field of characteristic zero. Let $a \in K$, and let $B \subseteq K$. If $a \in \acl_A(B)$, then $\delta_ka \in \acl_A(\delta_0B\cdots\delta_kB)$ for each $k\leq m$. 
\end{lemma}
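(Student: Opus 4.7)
The plan is to prove the lemma by induction on $k$, exploiting the minimal polynomial of $a$ over $A[B]$ and the pyramidal structure to isolate a usable linear equation for $\delta_k a$.

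First, pick $P(X) = \sum_{d} b_d X^d$ with $b_d \in A[B]$ of minimal degree such that $P(a) = 0$; since $K$ has characteristic zero and $P$ is minimal, $P'(a) \neq 0$. Note that, because $\delta_{0}$ is a ring endomorphism of a field, it is injective, and because it is $A$-linear with $\delta_0(1) = 1$, it fixes $A$ pointwise. Since each $\delta_i$ is $A$-linear, an easy induction on the size of a representation of $b_d$ as a polynomial in elements of $B$, using the pyramidal multiplication rule, shows that $\delta_i(b_d) \in A[\delta_0 B \cup \cdots \cup \delta_i B]$ for every $i \leq m$.

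For the base case $k = 0$: applying the ring homomorphism $\delta_0$ to $P(a) = 0$ gives $\sum_d \delta_0(b_d)\, \delta_0(a)^d = 0$. This polynomial in $\delta_0(a)$ has coefficients in $A[\delta_0 B]$ and is nonzero by injectivity of $\delta_0$, so $\delta_0(a) \in \acl_A(\delta_0 B)$.

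For the inductive step, assume the conclusion for all $j < k$. Apply $\delta_k$ to $P(a) = 0$. Using the pyramidal product rule, each term $\delta_k(b_d a^d)$ expands as
\[
\delta_k(b_d a^d) = \delta_0(b_d)\,\delta_k(a^d) + \delta_k(b_d)\,\delta_0(a^d) + \sum_{i,j<k} a_{i,j,k}\,\delta_i(b_d)\,\delta_j(a^d),
\]
and Lemma~\ref{lem:polyD} gives $\delta_k(a^d) = d\,\delta_0(a)^{d-1}\delta_k(a) + R_{k,d}(\delta_0 a,\ldots,\delta_{k-1} a)$. Iterating Lemma~\ref{lem:polyD} (or arguing directly from pyramidality) shows that $\delta_j(a^d)$ for $j<k$ is a polynomial over $A$ in $\delta_0 a,\ldots,\delta_j a$. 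Collecting the coefficient of $\delta_k(a)$ yields $\sum_d \delta_0(b_d)\cdot d\,\delta_0(a)^{d-1} = \delta_0(P'(a))$, which is nonzero since $P'(a) \neq 0$ and $\delta_0$ is injective. Thus we obtain a relation
\[
\delta_0(P'(a))\cdot\delta_k(a) + E = 0,
\]
where $E$ is a polynomial expression over $A$ in the quantities $\delta_0 a,\ldots,\delta_{k-1} a$ and $\delta_i(b_d)$ for $i \leq k$. Solving for $\delta_k(a)$ and using that the $\delta_j a$ with $j<k$ lie in $\acl_A(\delta_0 B \cup \cdots \cup \delta_{k-1} B)$ by the inductive hypothesis, and that the $\delta_i(b_d)$ lie in $A[\delta_0 B \cup \cdots \cup \delta_i B]$ by the preliminary observation, we conclude $\delta_k(a) \in \acl_A(\delta_0 B \cup \cdots \cup \delta_k B)$.

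The main obstacle is purely bookkeeping: one must carefully track that applying $\delta_k$ to a polynomial expression in $a$ with coefficients in $A[B]$ produces only terms that either (i) involve $\delta_k(a)$ linearly with coefficient $\delta_0(P'(a))$, or (ii) involve lower-order $\delta_j(a)$ and $\delta_i(b_d)$ with $i \leq k$. Once that is set up, the characteristic-zero hypothesis (via $P'(a)\neq 0$) together with injectivity of $\delta_0$ does the remaining work.
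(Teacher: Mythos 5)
Your proof follows essentially the same approach as the paper's: apply $\delta_k$ to the minimal polynomial relation $P(a)=0$, expand via the pyramidal product rule and Lemma~\ref{lem:polyD}, identify $\delta_0(P'(a))$ as the (nonzero) coefficient of $\delta_k a$, and induct on $k$. The extra bookkeeping you spell out (that $\delta_0$ fixes $A$ and is injective, and that $\delta_i(b_d)\in A[\delta_0 B\cup\cdots\cup\delta_i B]$) is left implicit in the paper but is correct and needed.
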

\begin{proof}
Take a nonzero polynomial $P(X)$ of minimal degree witnessing that $a\in \acl_A(B)$, so $P(a) = 0$ and $P'(a)\neq 0$. Write $P(X) = \sum_db_dX^d$ where each $b_d \in A[B]$. We will show by induction on $k\leq m$ that $\delta_ka \in \acl_A(\delta_0B\cdots\delta_kB)$. This holds for $k = 0$ since $\delta_0$ is an endomorphism of $K$. Let $0<k\leq m$, and assume that $\delta_ja \in \acl_A(\delta_0 B\cdots \delta_jB)$ for all $j<k$. By Lemma~\ref{lem:polyD}, we have
\begin{align*}
0 &= \delta_kP(a)=\sum_d\delta_k(b_da^d) = \sum_d\Big(\delta_k(b_d)\delta_0(a^d)+ \sum_{i,j<k}a_{i,j,k}\delta_i(b_d)\delta_j(a^d)+\delta_0(b_d)\delta_k(a^d)\Big)\\
&= \sum_d\Big(\delta_k(b_d)\delta_0(a^d)+ \sum_{i,j<k}a_{i,j,k}\delta_i(b_d)\delta_j(a^d)+\delta_0(b_d)R_{k,d}(\delta_0a,\ldots,\delta_{k-1}a)+\delta_0(b_d)d\delta_0(a)^{d-1}\delta_k(a)\Big)\\
&= \sum_d\Big(\delta_k(b_d)\delta_0(a^d)+ \sum_{i,j<k}a_{i,j,k}\delta_i(b_d)\delta_j(a^d)+\delta_0(b_d)R_{k,d}(\delta_0a,\ldots,\delta_{k-1}a)\Big)+\delta_0(P'(a))\delta_k(a).
\end{align*}
Since $P'(a)\neq 0$, this shows that $\delta_ka$ is algebraic over $\delta_0B\cdots\delta_kB$, together with $\delta_j(a^d)$ for $j<k$. Applying Lemma~\ref{lem:polyD} again and using our induction hypothesis, we see that 
\[
\delta_j(a^d) \in \acl_{A}(\delta_0a,\ldots,\delta_ja)\subseteq \acl_{A}(\delta_0B\cdots \delta_kB)
\]
for $j <k$, so $\delta_ka \in \acl_{A}(\delta_0B\cdots \delta_kB)$.
\end{proof}

\begin{corollary}\label{cor:hdfields}
Let $K$ be a field of characteristic zero, let $F$ be a subfield of $K$ containing $A$, let $\av$ be a tuple from $K$, and let $\Phi = (\Phi_1,\ldots,\Phi_k)$ be a partitioned set of commuting maps $K\to K$. Suppose for each $i=1,\ldots,k$ that $(K,\Phi_i)$ is a pyramidal twisted $\cD_i$-field, and that $(F,\Phi_i)$ is a $\cD_i$-subfield of $K$. Then there is a polynomial $P^\Phi_{\av} \in \Q[\Y]$ such that
\[
\trdeg_FF(\Phi^{(\sv)}(\av))=P^\Phi_{\av}(\sv)
\]
whenever $\min(\sv)$ is sufficiently large.
\end{corollary}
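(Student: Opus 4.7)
The plan is to apply Theorem~\ref{thm:hilbert} inside the finitary matroid $(K,\acl_F)$, whose rank function assigns to $S\subseteq K$ the transcendence degree $\trdeg_F F(S)$. If each part $\Phi_i$ of the partition is a triangular system for this matroid, then taking the finite set of components of $\av$ in place of the ``$A$'' in Theorem~\ref{thm:hilbert} and the empty set in place of ``$B$'' delivers a polynomial $P^\Phi_{\av}(\Y)\in\Q[\Y]$ with $\trdeg_F F(\Phi^{(\sv)}(\av))=\rk(\Phi^{(\sv)}(\av))=P^\Phi_{\av}(\sv)$ for $\min(\sv)$ sufficiently large. So the whole proof reduces to verifying that each $\Phi_i$ is triangular for $(K,\acl_F)$.

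To do this, I would repeat the argument of Lemma~\ref{lem:higherderiv} but with the field $F$ in place of the subring $A$. Write $\Phi_i=(\delta_0,\ldots,\delta_{d_i-1})$ with $\delta_0$ a field endomorphism of $K$, let $a\in\acl_F(B)$, and pick a minimal-degree polynomial $P(X)=\sum_db_dX^d\in F[B][X]$ vanishing at $a$, so that $P'(a)\neq 0$. Proceeding by induction on $k$, applying $\delta_k$ to the identity $P(a)=0$ via the pyramidal multiplication rule~\eqref{eq:mult-Dring} together with Lemma~\ref{lem:polyD} yields
\[
0=\delta_0(P'(a))\,\delta_k(a)+\Gamma,
\]
where $\Gamma$ is a polynomial expression in the $\delta_j(b_d)$ for $j\le k$ and the $\delta_j(a)$ for $j<k$. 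Since $\delta_0$ is a field endomorphism and $P'(a)\neq 0$, the coefficient $\delta_0(P'(a))$ is nonzero. The inductive hypothesis then takes care of the terms $\delta_j(a)$ with $j<k$.

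The new ingredient, compared to Lemma~\ref{lem:higherderiv}, is the need to show $\delta_j(b_d)\in F[\delta_0B\cup\cdots\cup\delta_jB]$ for $j\le k$. Here the assumption that $(F,\Phi_i)$ is a $\cD_i$-subfield is essential: it guarantees $\delta_l(F)\subseteq F$ for every $l<d_i$. Since each $b_d$ is a polynomial in elements of $B$ with coefficients in $F$, one unfolds $\delta_j(b_d)$ via iterated applications of~\eqref{eq:mult-Dring}, each step producing only terms of the form $\delta_l(f)$ with $f\in F$ (which remain in $F$) and $\delta_l(b)$ with $b\in B$ (which lie in $\delta_lB$), for $l\le j$. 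Combining this with the displayed equation gives $\delta_k(a)\in\acl_F(\delta_0B\cup\cdots\cup\delta_kB)$, which is precisely the triangular condition for $\Phi_i$ on $(K,\acl_F)$.

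The main obstacle, and the step needing the most care, is exactly this lift of Lemma~\ref{lem:higherderiv} from an $A$-base to the possibly much larger $F$-base; once triangularity of each $\Phi_i$ is secured, the eventual polynomial behavior follows immediately from Theorem~\ref{thm:hilbert}.
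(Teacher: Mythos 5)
Your proof is correct, and it matches the route the paper has in mind: verify that each $\Phi_i$ is a triangular system for $(K,\acl_F)$ and then invoke Theorem~\ref{thm:hilbert}. There is one simplification worth pointing out: you need not re-derive Lemma~\ref{lem:higherderiv} over the larger ground field $F$. Lemma~\ref{lem:higherderiv} already establishes that each $\Phi_i$ is triangular for the matroid $(K,\acl_A)$, and the remark in Section~\ref{sec:prelim} on localizations (``if $\Phi$ is a triangular system for $(X,\cl)$ and $C\subseteq X$ is closed under each $\phi_i$, then $\Phi$ is triangular for $(X,\cl_C)$'') immediately upgrades this: since $(F,\Phi_i)$ is a $\cD_i$-subfield, $F$ is closed under every map in $\Phi_i$, and since $A\subseteq F$, localizing $(K,\acl_A)$ at $F$ gives $(K,\acl_F)$, so each $\Phi_i$ is triangular there. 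That one-line reduction is precisely where the hypothesis ``$(F,\Phi_i)$ is a $\cD_i$-subfield'' enters, replacing your careful unfolding of $\delta_j(b_d)$. Your by-hand version is also fine and shows the same thing, just more laboriously.
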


\begin{question}
Does Corollary~\ref{cor:hdfields} hold for all $\cD$-fields (in the sense
of \cite{MS14}) with commuting operators?
\end{question}

\subsection{Higher derivations in positive characteristic}
Let $K$ be a field of characteristic $p>0$. Then any derivation $\delta$ on $K$ is trivial on the subfield $K^p \subseteq K$, as $\delta a^p = pa^{p-1} \delta a = 0$ for $a \in K$. It follows that $\delta$ is not generally a quasi-endomorphism of $(K,\acl)$. For example, let $K = \F_p(x,y)$ with $x,y$ mutually transcendental over $\F_p$, and consider the derivation $\delta$ on $K$ that satisfies $\delta x = 1,\delta y = x$. We have $y \in \acl(y^p)$, but 
\[
\delta y = x \not\in \acl(y^p,\delta y^p) = \acl(y^p).
\]
One may attempt to remedy this by using the separable closure $\scl$ in place of
the algebraic closure, but then we run into a different issue:\ $(K,\scl)$ need
not be a finitary matroid. For example, with $K$ as above, we have $x^p \in
\scl(x)\setminus \scl(\emptyset)$ but $x \not\in \scl(x^p)$. 
This second issue can be solved by relativizing at $K^p$. 

Before checking that $(K,\scl_{K^p})$ is indeed a finitary matroid, we note that for any $P(X)\in K^p[X]$ and any $a \in K$, there is some polynomial $Q(X) \in K^p[X]$ of degree at most $p -1$ with $P(a) = Q(a)$ and $P'(a) = Q'(a)$. To obtain $Q$, we simply replace any monomial $X^{np+d}$ appearing in $P$ with $a^{np}X^d \in K^p[X]$. Thus, we have $a \in \scl_{K^p}(B)$ if and only if $P(a) = 0$ for some nonzero polynomial $P(X)\in K^p[B][X]$ of degree at most $p-1$.

\begin{lemma}\label{lem:sclmatroid}
$(K,\scl_{K^p})$ is a finitary matroid. 
\end{lemma}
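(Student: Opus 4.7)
The plan is to reduce the problem to a classical object by showing that the operator $\scl_{K^p}$ coincides with $B \mapsto K^p(B)$. The resulting matroid is then the well-known matroid of $p$-dependence on $K$ relative to $K^p$, and the axioms follow from elementary degree-counting in the tower $K/K^p$, every simple subextension of which has degree $1$ or $p$.

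The first step is the identification $\scl_{K^p}(B) = K^p(B)$. The inclusion $K^p(B) \subseteq \scl_{K^p}(B)$ is immediate. Conversely, suppose $a$ is separable algebraic over $K^p(B)$ with minimal polynomial $f \in K^p(B)[X]$. Since $a^p \in K^p \subseteq K^p(B)$, the polynomial $X^p - a^p = (X-a)^p$ vanishes at $a$, so $f = (X-a)^k$ for some $1 \leq k \leq p$. The coefficient of $X^{k-1}$ in $f$ is $-ka$, which must lie in $K^p(B)$. If $1 \leq k < p$, then $k$ is a nonzero element of the prime field and hence invertible, forcing $a \in K^p(B)$ (and necessarily $k = 1$). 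If $k = p$, then $f = X^p - a^p$ has zero derivative, contradicting separability of $a$. In either case, $a \in K^p(B)$.

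Next I would verify the four matroid axioms for the operator $B \mapsto K^p(B)$. Monotonicity is obvious; finite character holds because each element of $K^p(B)$ is a rational expression in finitely many elements of $B$ with coefficients in $K^p$; and idempotence is automatic since $K^p(B)$ is already a subfield of $K$ containing $K^p$. For Steinitz exchange, suppose $a \in K^p(B,b) \setminus K^p(B)$. The argument above, applied to an arbitrary $c \in K$ in place of $a$, shows that the minimal polynomial of $c$ over $K^p(B)$ has degree in $\{1,p\}$, and equals $1$ iff $c \in K^p(B)$. Hence $[K^p(B,a) : K^p(B)] = p$, and the chain $K^p(B) \subsetneq K^p(B,a) \subseteq K^p(B,b)$ forces $[K^p(B,b) : K^p(B)] = p$ as well. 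Two degree-$p$ subextensions of the degree-$p$ extension $K^p(B,b)/K^p(B)$ must coincide, so $K^p(B,a) = K^p(B,b)$, and in particular $b \in K^p(B,a)$.

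The only delicate point is the minimal polynomial analysis at the start: one must use the separability hypothesis (not merely algebraicity) to rule out the case $f = X^p - a^p$, which is exactly where $\acl$ and $\scl$ part ways over $K^p$. Once that reduction is in place, everything else is straightforward, and in fact the same degree-$\{1,p\}$ dichotomy drives both the identification of the closure and the exchange property.
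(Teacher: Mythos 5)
Your proof is correct, and it takes a somewhat different route from the paper's. Both arguments hinge on the same algebraic fact --- for any $a \in K$ and any subfield $F \supseteq K^p$, the minimal polynomial of $a$ over $F$ divides $X^p - a^p = (X-a)^p$, so it has degree $1$ or $p$, and it is separable precisely when the degree is $1$. The paper phrases this as: $a \in \scl_{K^p}(B)$ iff $P(a)=0$ for some nonzero $P \in K^p[B][X]$ of degree at most $p-1$, and then establishes exchange directly by a two-variable polynomial manipulation (writing $P(X,Y) = \sum_{d<p} Q_d(X) Y^d$ with each $Q_d$ of degree $< p$, observing $P(a,Y) \neq 0$). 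You instead upgrade the same dichotomy to the clean identification $\scl_{K^p}(B) = K^p(B)$, which makes the operator visibly the classical $p$-independence closure, and then prove exchange by a field-degree argument: both $K^p(B,a)$ and $K^p(B,b)$ sit over $K^p(B)$ with degree exactly $p$, and an intermediate field of full degree is the whole extension. Your route is conceptually tidier and reduces to a known matroid (Teichm\"{u}ller's $p$-bases, cf.\ \cite[Section 26]{Ma86}, which the paper even cites immediately after the lemma), at the cost of invoking a little more field theory (transitivity and a degree count); the paper's route is more elementary and self-contained, verifying exchange by hand without passing through the intermediate-field picture. The paper also saves work by noting that monotonicity, idempotence, and finite character hold already for $\scl$ and are preserved by localization, whereas you re-derive them from the explicit description of $K^p(B)$ --- both are fine.
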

\begin{proof}
Since monotonicity, idempotence, and finite character hold for $\scl$, and since these properties are preserved by relativization, we need only check Steinitz exchange. Let $a,b \in K$ and $A \subseteq K$, and suppose that $a \in \scl_{K^p}(Ab) \setminus \scl_{K^p}(A)$. Let $P(X,Y)$ be a nonzero polynomial over $K^p[A]$ of degree at most $p-1$ in both $X$ and $Y$ with $P(a,b) = 0$. We write 
\[
P(X,Y)=\sum_{d<p}Q_d(X)Y^d
\]
where each $Q_d(X)$ is a polynomial over $K^p[A]$ of degree at most $p-1$. As $P$ is nonzero, some $Q_d$ is nonzero, so $Q_d(a) \neq 0$ as $a \not\in \scl_{K^p}(A)$. Thus, $P(a,b) = 0$ and $P(a,Y)\neq 0$, so $b \in \scl_{K^p}(Aa)$.
\end{proof}

Given a subfield $F \subseteq K$, the rank $\rk(K|F)$ corresponding to the closure
$\scl_{K^p}$ is the cardinality of a $p$-basis for $F$ over $K$;
see~\cite[Section 26]{Ma86}. 

We return to the setting of pyramidal twisted $\cD$-fields. Again, we fix a subfield $A \subseteq K$ and a tuple $\Delta = (\delta_0,\ldots,\delta_m)$ of $A$-linear maps $K\to K$. Lemma~\ref{lem:polyD} still applies. Therefore, the following lemma can be proven in the same way as Lemma~\ref{lem:higherderiv} above, so long as one takes the polynomial $P$ in that proof to have degree at most $p-1$.

\begin{lemma}
Let $(K,\Delta)$ be a pyramidal twisted $\cD$-field of characteristic $p>0$. Let $a \in K$, and let $B \subseteq K$. If $a \in \scl_{K^p[A]}(B)$, then $\delta_ka \in \scl_{K^p[A]}(\delta_0B\cdots\delta_kB)$ for each $k\leq m$. In particular,
\begin{enumerate}[(i)]
\item Any field endomorphism of $K$ is an endomorphism of $(K,\scl_{K^p})$,
\item Any derivation on $K$ is a quasi-endomorphism of $(K,\scl_{K^p})$. 
\item Any commuting twisted higher derivation on $K$ is a triangular system with respect to the matroid $(K,\scl_{K^p})$. 
\end{enumerate}
\end{lemma}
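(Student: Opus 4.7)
The plan is to mirror the proof of Lemma~\ref{lem:higherderiv} (the characteristic zero analogue), substituting $\scl_{K^p[A]}$ for $\acl_A$ throughout and exploiting the observation, recorded just before Lemma~\ref{lem:sclmatroid}, that membership in $\scl_{K^p[A]}(B)$ is witnessed by a polynomial $P(X) \in K^p[A][B][X]$ of degree at most $p-1$. Given such a witness $a \in \scl_{K^p[A]}(B)$, I would choose $P$ of \emph{minimal} positive degree with $P(a) = 0$ and $P \in K^p[A][B][X]$ of degree at most $p-1$. Since $0 < \deg P < p$, the derivative $P'$ is a nonzero polynomial of strictly smaller degree, so by minimality we have $P'(a) \neq 0$. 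This replaces the role of separability/minimality in the characteristic zero proof.

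The main claim is proved by induction on $k$. For $k = 0$, apply the field endomorphism $\delta_0$ to $P(a) = 0$; using $\delta_0(K^p) \subseteq K^p$ and $\delta_0|_A = \id_A$ (which follows from $A$-linearity of $\delta_0$ combined with $\delta_0(1) = 1$), one gets a nonzero polynomial in $K^p[A][\delta_0 B][X]$ vanishing at $\delta_0 a$. For the inductive step, write $P(X) = \sum_d b_d X^d$ with $b_d \in K^p[A][B]$ and apply $\delta_k$ to the identity $\sum_d b_d a^d = 0$. Using the pyramidal multiplication formula together with Lemma~\ref{lem:polyD} to expand $\delta_k(a^d) = d\delta_0(a)^{d-1}\delta_k(a) + R_{k,d}(\delta_0 a, \ldots, \delta_{k-1} a)$, the resulting identity isolates $\delta_k(a)$ with coefficient $\delta_0(P'(a))$, which is nonzero since $\delta_0$ is injective. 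The remaining terms involve $\delta_i(a)$ and $\delta_j(a^d)$ for $i,j<k$ (handled by the induction hypothesis and Lemma~\ref{lem:polyD}) and $\delta_i(b_d)$ for $i \leq k$; the last are controlled by iterating the pyramidal formula on the expansion of $b_d \in K^p[A][B]$, using that $\delta_j|_A = 0$ for $j > 0$ (again from $A$-linearity plus $\delta_j(1) = 0$), that $\delta_0(K^p[A]) \subseteq K^p[A]$, and that $\delta_j$ acts controllably on $K^p$ in the systems of interest.

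The main obstacle is precisely this last bookkeeping: ensuring $\delta_i(b_d) \in \scl_{K^p[A]}(\delta_0 B \cdots \delta_i B)$. For the three cases stated as consequences this is transparent: in (i) a single field endomorphism $\sigma$ (viewed as a pyramidal twisted $\cD$-structure over the prime field) satisfies $\sigma(K^p) \subseteq K^p$; in (ii) a classical derivation $\delta$ satisfies $\delta(K^p) = 0$ in characteristic $p$ (so $\delta$ annihilates $K^p[A]$ entirely, drastically simplifying the expansion); and in (iii) the iterative identity $\delta_k(e^p) = \delta_{k/p}(e)^p$ if $p \mid k$ and $0$ otherwise gives $\delta_k(K^p[A]) \subseteq K^p[A]$. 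Thus (i) follows by taking $\Delta = (\sigma)$ and $A = \F_p$; (ii) follows from $\Delta = (\id,\delta)$, yielding $\delta a \in \scl_{K^p}(B \cup \delta B)$, the quasi-endomorphism property; and (iii) follows directly, giving $\delta_k a \in \scl_{K^p}(\delta_0 B \cdots \delta_k B)$ for each $k$, which is triangularity.
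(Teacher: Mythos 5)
Your proposal follows the paper's route exactly: the paper's own proof is a one-liner, ``prove it as in Lemma~6.9 but take the witnessing polynomial to have degree at most $p-1$,'' and you have filled in precisely that argument (including the correct observation that a minimal-degree witness of degree $<p$ automatically has $P'(a)\neq 0$). Two small points are worth flagging. First, your parenthetical ``$\delta_j|_A = 0$ for $j>0$, from $A$-linearity plus $\delta_j(1)=0$'' is not true for a general pyramidal twisted $\cD$-field: from $\delta_j(1\cdot 1)$ one gets $\delta_j(1) = -a_{0,0,j}$, which need not vanish. What you actually need, and what $A$-linearity gives, is only $\delta_j(A)\subseteq A$, and that suffices. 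Second, you verify the controllability of $\delta_j$ on $K^p$ only in the three cases (i)--(iii), which is enough for the ``in particular'' statements but falls short of the lemma's general claim. In fact $\delta_j(K^p)\subseteq K^p[A]$ holds for any pyramidal twisted $\cD$-field: applying the associated ring homomorphism $c\mapsto \sum_i \delta_i(c)\epsilon_i$ into $K\otimes_A\cD(A)$ and raising to the $p$th power (in characteristic $p$) gives $\delta_j(c^p) = \sum_i\gamma_{i,j}\,\delta_i(c)^p$ with $\gamma_{i,j}\in A$, the $\gamma_{i,j}$ coming from expressing $\epsilon_i^p$ in the basis $1,\epsilon_1,\dotsc,\epsilon_m$. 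With that supplement your induction closes in full generality, and the rest of your bookkeeping (including the reduction of $\delta_k(b_d)$ for $b_d\in K^p[A][B]$ via the pyramidal formula and Lemma~\ref{lem:polyD}) is sound.
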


Accordingly, we can prove a positive characteristic analog of
Corollary~\ref{cor:hdfields}. 

\begin{corollary}\label{cor:hdfields-p}
Let $K$ be a field of characteristic $p>0$, let $F$ be a subfield of $K$ containing $K^p[A]$, let $\av$ be a tuple from $K$, and let $\Phi = (\Phi_1,\ldots,\Phi_k)$ be a partitioned set of commuting maps $K\to K$. Suppose for each $i=1,\ldots,k$ that $(K,\Phi_i)$ is a pyramidal twisted $\cD_i$-field, and that $(F,\Phi_i)$ is a $\cD_i$-subfield of $K$. Then there is a polynomial $P^\Phi_{\av} \in \Q[\Y]$ such that the size of a $p$-basis for $F(\Phi^{(\sv)}(\av))$ is equal to $P^\Phi_{\av}(\sv)$, whenever $\min(\sv)$ is sufficiently large.
\end{corollary}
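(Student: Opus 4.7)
My plan is to mirror the proof of Corollary~\ref{cor:hdfields}, but with the finitary matroid $(K,\scl_{K^p})$ from Lemma~\ref{lem:sclmatroid} replacing $(K,\acl)$. The first step is to localize $(K,\scl_{K^p})$ at $F$ to obtain the finitary matroid $(K,\scl_F)$, noting that since $K^p[A]\subseteq F$, one has $\scl_F(B)=\scl_{K^p}(FB)=\scl_{K^p[A]}(FB)$. By the identification recorded just after Lemma~\ref{lem:sclmatroid}, the corresponding rank $\rk_F(B)$ equals the size of a $p$-basis for $F(B)$ over $F$; hence it suffices to show that $\rk_F(\Phi^{(\sv)}(\av))$ is eventually polynomial in $\sv$.

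Next, I would verify that each $\Phi_i$ is a triangular system for $(K,\scl_F)$. The lemma preceding Corollary~\ref{cor:hdfields-p} yields triangularity in the matroid $(K,\scl_{K^p[A]})$: if $a\in\scl_{K^p[A]}(B)$, then $\phi_j a\in\scl_{K^p[A]}(\phi_{m_i+1}(B)\cdots\phi_j(B))$ for each $\phi_j\in\Phi_i$. To transfer this to $(K,\scl_F)$, I would use that $(F,\Phi_i)$ is by assumption a $\cD_i$-subfield, so every map in $\Phi_i$ sends $F$ into $F$. As observed in Section~\ref{sec:prelim} immediately following the definition of a triangular system, triangularity is preserved when one localizes at a subset that is closed under each $\phi_j$, so $\Phi_i$ is triangular for $(K,\scl_F)$.

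Finally, I would apply Theorem~\ref{thm:hilbert} to the matroid $(K,\scl_F)$ with finite set $\av$ and $B=\emptyset$, obtaining a polynomial $P^\Phi_{\av}\in\Q[\Y]$ (of degree at most $d_i-1$ in each variable $Y_i$) such that $\rk_F(\Phi^{(\sv)}(\av))=P^\Phi_{\av}(\sv)$ whenever $\min(\sv)$ is sufficiently large. Combining this with the identification of $\rk_F$ with $p$-basis size yields the corollary.

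The argument is essentially a formal composition of the preceding lemma, the localization of matroids, and Theorem~\ref{thm:hilbert}, so I do not expect any substantive obstacle. The only subtlety is that the preceding lemma is stated for $\scl_{K^p[A]}$ rather than $\scl_{K^p}$: this is precisely why the hypothesis $F\supseteq K^p[A]$ appears in the statement, as it allows $A$ to be absorbed into $F$ through the localization before invoking Theorem~\ref{thm:hilbert}.
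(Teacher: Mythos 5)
Your proposal is correct and follows essentially the same route the paper intends (the paper only sketches this corollary by analogy with Corollary~\ref{cor:hdfields}): use the lemma immediately preceding the statement to get triangularity of each $\Phi_i$ with respect to $\scl_{K^p[A]}$, localize at $F$ (legitimate since $F \supseteq K^p[A]$ is closed under each $\phi_j$ by the $\cD_i$-subfield hypothesis), note that $\rk_F$ in the localized matroid $(K,\scl_F)$ is the size of a $p$-basis over $F$, and invoke Theorem~\ref{thm:hilbert} with $B=\emptyset$. Your closing remark, that the purpose of the hypothesis $F \supseteq K^p[A]$ is precisely to make $(\scl_{K^p[A]})_F = \scl_F$ well-behaved so $A$ can be absorbed before applying Theorem~\ref{thm:hilbert}, is exactly the right observation.
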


\subsection{Kolchin polynomials for difference-differential exponential fields}
Let $K$ be a field of characteristic zero. An \textbf{exponential on $K$} is a group homomorphism $E\colon A(K) \to K^\times$, where $A(K)$ is a divisible subgroup of the additive group of $K$. If $E$ is an exponential on $K$, then the pair $(K, E)$ is called an \textbf{exponential field}. A subfield $F$ of $K$ is an \textbf{exponential subfield} if $E(a) \in F$ for all $a \in A(F) \coloneqq A(K) \cap F$. The fields $\R$ and $\CC$ with their usual exponential functions are exponential fields (where the domain of the exponential is the entire field). 

\smallskip
Let $(K, E)$ be an exponential field. An \textbf{$E$-term} is a partial function given by arbitrary compositions of $E$ and polynomials over $\Z$. Model theoretically speaking, an $E$-term is a term in the language $(+,\cdot,-,0,1, E)$; to avoid partially defined functions, one may take $E$ to be identically zero away from $A(K)$. Let $B \subseteq K$. A tuple $\av = (a_1,\ldots,a_n)$ is said to be a \textbf{regular solution to a Khovanskii system over $B$} if there is a tuple $\bv = (b_1,\ldots,b_m)$ from $B$ and $E$-terms $t_1,\ldots,t_n$ in $n+m$ variables such that
\[
t_1(\av,\bv)=\cdots=t_n(\av,\bv)=0,\qquad \det\begin{pmatrix}
\frac{\partial t_1}{\partial X_1}(\av,\bv) & \cdots &\frac{\partial t_1}{\partial X_n}(\av,\bv)\\
\vdots & \ddots & \vdots\\
\frac{\partial t_n}{\partial X_1}(\av,\bv) & \cdots & \frac{\partial t_n}{\partial X_n} (\av,\bv)
\end{pmatrix} \neq 0.
\]
The \textbf{exponential-algebraic closure of $B$}, written $\ecl(B)$, consists of all components of any regular solution to a Khovanskii system over $B$. The exponential-algebraic closure was first defined by Macintyre~\cite{Ma96}, and Kirby later showed that $(K,\ecl)$ is always a finitary matroid~\cite[Theorem 1.1]{Ki10}, extending earlier work of Wilkie~\cite{Wi08}. If $F$ is an exponential subfield of $K$ and $A$ is a subset of $K$, then we let $F(A)^E$ denote the exponential subfield of $K$ generated by $F$ and $A$, and we define the \textbf{exponential transcendence degree of $F(A)^E$ over $F$}, denoted $\etrdeg_FF(A)^E$, to be the rank $\rk\big(F(A)^E\big|F\big) = \rk(A|F)$ given by the matroid $(K,\ecl)$.

\smallskip
An \textbf{exponential endomorphism of $K$} is a field endomorphism $\sigma\colon K \to K$ such that $\sigma E(a) = E(\sigma a)$ for all $a \in A(K)$. An \textbf{exponential derivation on $K$} is a derivation $\delta\colon K \to K$ which satisfies the identity $\delta E(a) = E(a) \delta a$ for all $a \in A(K)$.

\begin{lemma}
Any exponential endomorphism of $K$ is an endomorphism of $(K,\ecl)$. Any exponential derivation on $K$ is a quasi-endomorphism of $(K,\ecl)$.
\end{lemma}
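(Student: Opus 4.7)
The plan is to unpack both claims directly from the Khovanskii system definition of $\ecl$. Suppose $a \in \ecl(B)$ and fix a regular solution $\av = (a_1, \ldots, a_n)$ to a Khovanskii system over $B$, witnessed by $E$-terms $t_1, \ldots, t_n$ in variables $(X_1, \ldots, X_n, Y_1, \ldots, Y_m)$ and a tuple $\bv = (b_1, \ldots, b_m)$ from $B$, with $a = a_i$ for some $i$. A routine structural induction (using $\partial E(s)/\partial X = E(s)\cdot \partial s/\partial X$) shows that every formal partial derivative $\partial t_j/\partial X_l$ is again an $E$-term. The two cases then proceed by transforming this witness under $\sigma$ or $\delta$.

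For the endomorphism case, a short induction on $E$-terms, using that $\sigma$ is a ring homomorphism with $\sigma E(u) = E(\sigma u)$ on $A(K)$, gives $\sigma(t(\cv)) = t(\sigma \cv)$ for every $E$-term $t$ and every tuple $\cv$ at which $t$ is defined. Applying $\sigma$ to the defining equations yields $t_j(\sigma \av, \sigma \bv) = 0$, and applying it entrywise to the Jacobian gives the Jacobian evaluated at $(\sigma \av, \sigma \bv)$. Injectivity of the field homomorphism $\sigma$ preserves the nonvanishing of the determinant, so $\sigma \av$ is a regular solution to a Khovanskii system over $\sigma B$, whence $\sigma a \in \ecl(\sigma B)$.

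For the derivation case, the central step is to apply $\delta$ to each defining equation $t_j(\av,\bv)=0$. Linearity, the Leibniz rule, and $\delta E(u) = E(u)\,\delta u$ combine into the chain rule
\[
\sum_{l=1}^n \frac{\partial t_j}{\partial X_l}(\av,\bv)\,\delta a_l \;=\; -\sum_{k=1}^m \frac{\partial t_j}{\partial Y_k}(\av,\bv)\,\delta b_k \qquad (j=1,\ldots,n),
\]
a square linear system in $(\delta a_1,\ldots,\delta a_n)$ whose coefficient matrix is the Jacobian of the Khovanskii system, nonsingular by regularity. Every matrix entry and every right-hand side is an $E$-term evaluated at an element of $\av \cup \bv \cup \delta\bv$; since $\av \subseteq \ecl(B)$, $\bv \subseteq B$, and $\delta\bv \subseteq \delta B$, all these values lie in $\ecl(B \cup \delta B)$, provided $\ecl$ is closed under evaluation of $E$-terms. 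This last closure property is the only non-cosmetic point to verify, and it follows immediately from the definition by considering the one-variable Khovanskii system $X - t(\cv) = 0$, which has Jacobian $1$. Cramer's rule then expresses each $\delta a_l$ as a ratio of determinants of elements of $\ecl(B \cup \delta B)$, yielding $\delta a_l \in \ecl(B \cup \delta B)$ and in particular $\delta a \in \ecl(B \cup \delta B)$.
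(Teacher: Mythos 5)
Your endomorphism argument is in essence what the paper does. For the derivation part, your proof is correct but takes a genuinely different route. The paper invokes Kirby's Theorem 1.1, which characterizes $\ecl(B)$ as the common kernel of all exponential derivations vanishing on $B$, and then argues via the commutator: if $\epsilon$ is an exponential derivation vanishing on $B\cup\delta B$, then $\epsilon\delta-\delta\epsilon$ is again an exponential derivation vanishing on $B$, hence on $a$, whence $\epsilon\delta a = \delta\epsilon a = 0$. Your approach instead works directly with the Khovanskii system witnessing $a\in\ecl(B)$: you apply $\delta$ to the defining equations, use an inductive chain rule for $E$-terms (valid because $\delta$ is additive, Leibniz, and satisfies $\delta E(u) = E(u)\delta u$, and because the formal partials of an $E$-term are again $E$-terms) to get a square linear system whose matrix is the regular Jacobian, and solve by Cramer's rule inside $\ecl(B\cup\delta B)$. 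The only point you leave somewhat implicit is that concluding $\delta a_l \in \ecl(B\cup\delta B)$ from the Cramer formula requires not just closure of $\ecl$ under $E$-term evaluation (your one-variable system argument) but also idempotence of $\ecl$, which is legitimate since Kirby's matroid theorem is already in place. Your computation is more elementary and self-contained (it only uses the definition of $\ecl$ plus the matroid axioms), while the paper's commutator argument is shorter and sidesteps the need to verify the chain rule for $E$-terms, at the cost of invoking the second, derivation-theoretic half of Kirby's theorem.
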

\begin{proof}
Let $\sigma$ be an exponential endomorphism of $K$, let $\delta$ be an exponential derivation on $K$, and let $B\subseteq K$. If $a\in K$ is a component of a regular solution to a Khovanskii system over $B$, then $\sigma(a)$ is a component to a regular solution to a Khovanskii system over $\sigma(B)$, namely, the same Khovanskii system but with the parameters from $B$ replaced with the corresponding parameters from $\sigma(B)$. Thus, $\sigma$ is an endomorphism of $(K,\ecl)$.

To see that $\delta$ is a quasi-endomorphism of $(K,\ecl)$, we use~\cite[Theorem 1.1]{Ki10}, which states that $a \in K$ belongs to $\ecl(B)$ if and only if every exponential derivation on $K$ which vanishes on $B$ also vanishes at $a$. Suppose $a \in \ecl(B)$, and let $\epsilon$ be an exponential derivation on $K$ which vanishes on $B \cup \delta(B)$. We need to show that $\epsilon\delta a= 0$. Consider the map $\epsilon\delta-\delta \epsilon\colon K\to K$, where $(\epsilon\delta-\delta \epsilon)(y) =\epsilon\delta y-\delta \epsilon y$. It is routine to show that $\epsilon\delta-\delta \epsilon$ is an exponential derivation. For $b \in B$, we have $\epsilon\delta b - \delta\epsilon b= 0$, since $\epsilon$ vanishes on $B \cup \delta(B)$. Thus, $\epsilon\delta-\delta \epsilon$ vanishes on $B$, so it also vanishes at $a$. Since $\epsilon$ also vanishes at $a$, we see that
\[
0=\epsilon\delta a - \delta\epsilon a=\epsilon\delta a.\qedhere
\]
\end{proof}

Suppose now that $\Phi = (\phi_1,\ldots,\phi_m)$ is a collection of commuting maps $K\to K$, each of which is either an exponential derivation or an exponential endomorphism. The structure $(K,E, \phi_1,\ldots,\phi_m)$ is called a \textbf{difference-differential exponential field} (or a \textbf{$\dif$-exponential field} for short). Let $F$ be a $\dif$-exponential subfield of $K$, that is, an exponential subfield of $K$ which is closed under each $\phi_i$. Applying Corollary~\ref{cor:kolchin}, Proposition~\ref{prop:invariant}, and Corollary~\ref{cor:kolchincoefficient} to the relativized matroid $(K,\ecl_F)$ gives us the following:

\begin{corollary}\label{cor:eddfields}
Let $\av$ be a tuple from $K$. Then there is a polynomial $Q^\Phi_{\av} \in \Q[\Y]$ such that
\[
\etrdeg_FF(\Phi^{\preceq(\sv)}(\av))^E=Q^\Phi_{\av}(\sv)
\]
whenever $\min(\sv)$ is sufficiently large. Moreover:
\begin{enumerate}[(i)]
\item The dominant terms of $Q^\Phi_{\av}$ only depend on the exponential-algebraic closure of the $\dif$-field extension $F(\Theta(\av))$.
\item The coefficient of $\Y^\dv$ in $Q^\Phi_{\av}$ times $d_1!\cdots d_k!$ is equal to the maximal size of a subset $B\subseteq \ecl_F(\Theta(\av))$ such that the tuple $(\theta b)_{\theta \in \Theta,b \in B}$ is exponential-algebraically independent over $F$.
\end{enumerate}
\end{corollary}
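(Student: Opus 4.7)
The plan is to deduce Corollary~\ref{cor:eddfields} directly from the machinery already developed, specifically Corollary~\ref{cor:kolchin}, Proposition~\ref{prop:invariant}, and Corollary~\ref{cor:kolchincoefficient}, applied to the localized exponential-algebraic matroid.

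First I would set the stage. Since $(K,\ecl)$ is a finitary matroid by Kirby's theorem, the localization $(K,\ecl_F)$ is also a finitary matroid, and its rank function is precisely $\etrdeg_F(\cdot)^E$ restricted to subsets of $K$. By the preceding lemma, each exponential endomorphism $\phi_i$ is an endomorphism of $(K,\ecl)$, and each exponential derivation is a quasi-endomorphism of $(K,\ecl)$. I would then check the routine fact that, since $F$ is closed under each $\phi_i$, being a (quasi)-endomorphism of $(K,\ecl)$ is preserved under localizing at $F$: if $a \in \ecl_F(B) = \ecl(FB)$, then applying the quasi-endomorphism property to the set $FB$ gives $\phi_i a \in \ecl(FB \cdot \phi_i(FB)) \subseteq \ecl_F(B \cdot \phi_i B)$, using $\phi_i(F) \subseteq F$. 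Consequently, each part $\Phi_i$ of the partition is a quasi-triangular system on $(K,\ecl_F)$.

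Now the main statement follows by directly invoking Corollary~\ref{cor:kolchin} applied to the finitary matroid $(K,\ecl_F)$ with the tuple $\av$ (playing the role of $A$) and $B = \emptyset$: this yields a polynomial $Q^\Phi_{\av} \in \Q[\Y]$ of degree at most $d_i$ in each $Y_i$ such that
\[
\etrdeg_F F(\Phi^{\preceq(\sv)}(\av))^E \;=\; \rk_F(\Phi^{\preceq(\sv)}(\av)) \;=\; Q^\Phi_{\av}(\sv)
\]
for $\min(\sv)$ sufficiently large. For item~(i), I would observe that in $(K,\ecl_F)$ the set $\emptyset$ trivially satisfies $\Theta(\emptyset) = \emptyset$, so Proposition~\ref{prop:invariant} applies: if $F(\Theta(\av))$ and $F(\Theta(\bv))$ have the same exponential-algebraic closure, then $\ecl_F(\Theta(\av)) = \ecl_F(\Theta(\bv))$, hence $Q^\Phi_{\av}$ and $Q^\Phi_{\bv}$ share the same dominant terms. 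For item~(ii), Corollary~\ref{cor:kolchincoefficient} identifies $\rk^{\Phi_*}_F(\av)$ as the maximal size of a subset $B \subseteq \ecl_F(\Theta(\av))$ such that $(\theta b)_{\theta \in \Theta, b \in B}$ is $\ecl_F$-independent—equivalently, exponential-algebraically independent over $F$—and Theorem~\ref{thm:phiclosure} identifies the leading coefficient of $Q^\Phi_{\av}$ as $\rk^{\Phi_*}_F(\av)/(d_1!\cdots d_k!)$.

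There is essentially no obstacle beyond bookkeeping: the one place calling for care is verifying that localization at the $\Phi$-closed subfield $F$ preserves the quasi-endomorphism property of each $\phi_i$, so that Corollary~\ref{cor:kolchin} really does apply to $(K,\ecl_F)$. Once this is in place, items~(i) and~(ii) are immediate translations of the general statements about $\cl^{\Phi_*}$ and dominant terms into the language of exponential-algebraic independence.
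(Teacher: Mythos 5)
Your proposal is correct and follows exactly the paper's route: the paper's proof of Corollary~\ref{cor:eddfields} consists precisely of applying Corollary~\ref{cor:kolchin}, Proposition~\ref{prop:invariant}, and Corollary~\ref{cor:kolchincoefficient} to the localized matroid $(K,\ecl_F)$, after having established that the relevant maps are (quasi-)endomorphisms of $(K,\ecl)$. The only detail you spell out that the paper leaves implicit is the bookkeeping check that localizing at the $\Phi$-closed subfield $F$ preserves the (quasi-)endomorphism property, which is a routine observation already flagged in the preliminaries for triangular systems.
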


\begin{remark}
With the obvious changes, Corollary~\ref{cor:eddfields} may be applied to $j$-fields. These fields, introduced in~\cite{Et18}, are equipped with partially defined functions which behave like the modular $j$-function. The relevant closure operator in this setting is the $j\!\operatorname{cl}$-closure, defined in~\cite{Et18}, and the tuple $\Phi$ should consist of commuting $j$-field endomorphisms and $j$-derivations, also defined in~\cite{Et18}. See~\cite{AEK22} for more on the $j$-closure operator. Thanks to Vincenzo Mantova for bringing this to our attention.
\end{remark}

\subsection{Derivations on o-minimal fields}
Let $T$ be an o-minimal theory extending the theory of real closed ordered fields, and let $K$ be a model of $T$; see~\cite{vdD98} for definitions and background. The \textbf{definable closure operator on $K$}, denoted $\dcl$, is given by
\[
a \in \dcl(B) :\Longleftrightarrow a = f(\bv)\text{ for some tuple $\bv$ from $B$ and some $\emptyset$-definable function $f$.}
\]
It is well-known that $(K,\dcl)$ is a finitary matroid, and we denote the corresponding rank function by $\rk_T$. Given an elementary substructure $F$ of $K$ and a set $A \subseteq K$, we let $F\langle A \rangle$ denote the definable closure of $F \cup A$ in $K$. Then $F\langle A \rangle$ is also a model of $T$, and $\rk_T(F\langle A\rangle|F) = \rk_T(A|F)$.

\smallskip
A \textbf{$T$-derivation on $K$} is a map $\delta\colon K\to K$ such that for each tuple $\av = (a_1,\ldots,a_n) \in K^n$ and each $\emptyset$-definable function $f$ which is $\cC^1$ in a neighborhood of $\av$, we have
\begin{equation}\label{eq:Tder}
\delta f(\av)=\frac{\partial f}{\partial Y_1}(\av) \delta a_1+ \cdots + \frac{\partial f}{\partial Y_n}(\av) \delta a_n.
\end{equation}
The study of $T$-derivations was initiated by the authors in~\cite{FK21} and was expanded on by the second author~\cite{Ka21}. 
The link between compatible derivations on o-minimal structures and definable closure in o-minimal structures has long been used by Wilkie and others; see~\cite{Wi08,BKW10}.
The following fact can be proven along the lines of Lemma~\ref{lem:derisquasi}, using (\ref{eq:Tder}) above:

\begin{fact}
Any $T$-derivation on $K$ is a quasi-endomorphism of $(K,\dcl)$.
\end{fact}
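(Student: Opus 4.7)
The plan is to mimic the proof of Lemma~\ref{lem:derisquasi}, replacing the algebraic minimal polynomial and the Leibniz rule with an arbitrary $\emptyset$-definable function and identity~(\ref{eq:Tder}). Let $\delta$ be a $T$-derivation on $K$ and let $a \in \dcl(B)$. By finite character of $\dcl$, I may assume that $B$ is finite. The goal is to write $a = f(\bv)$ for some tuple $\bv$ drawn from $B$ and some $\emptyset$-definable function $f$ which is $\cC^1$ at $\bv$, for once such a representation is in hand, identity~(\ref{eq:Tder}) immediately yields
\[
\delta a \;=\; \sum_j \frac{\partial f}{\partial Y_j}(\bv)\,\delta b_j \;\in\; \dcl(B\delta B).
\]

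The key preparatory step is to select $\bv$ so that $\cC^1$-regularity at $\bv$ will be automatic. I would take $\bv = (b_1,\ldots,b_d)$ to be a $\dcl$-basis for $\dcl(B)$ contained in $B$; such a $\bv$ exists by a standard matroid argument (choose any maximal $\dcl$-independent subset of $B$). By maximality $a \in \dcl(\bv)$, so I can pick a $\emptyset$-definable $f\colon K^d \to K$ with $a = f(\bv)$.

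The main obstacle is then verifying that this chosen $f$ really is $\cC^1$ at the particular point $\bv$. For this I would invoke o-minimal cell decomposition: the set of points at which $f$ fails to be $\cC^1$ is contained in a $\emptyset$-definable subset $S \subseteq K^d$ of dimension strictly less than $d$. A $\dcl$-independent tuple cannot lie in any such $S$, since by o-minimal dimension theory, membership in a $\emptyset$-definable set of dimension less than $d$ forces a $\emptyset$-definable relation among the coordinates, contradicting independence. Hence $\bv \notin S$, so $f$ is $\cC^1$ in a neighborhood of $\bv$, and the displayed computation above yields $\delta a \in \dcl(B\delta B)$, establishing that $\delta$ is a quasi-endomorphism of $(K,\dcl)$.
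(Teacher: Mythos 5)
Your proof is correct and implements exactly the route the paper only gestures at (it cites the analogy with Lemma~\ref{lem:derisquasi} and identity~(\ref{eq:Tder}) without spelling out the argument): reduce to a $\dcl$-independent tuple $\bv\subseteq B$, use $\cC^1$-cell decomposition together with the standard fact that a $\dcl$-independent $d$-tuple cannot lie in an $\emptyset$-definable set of dimension $<d$ to arrange that the witnessing function is $\cC^1$ at $\bv$, and then apply~(\ref{eq:Tder}).
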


Now suppose that $\Phi = (\phi_1,\ldots,\phi_m)$ is a collection of commuting $T$-derivations on $K$. Then $(K, \phi_1,\ldots,\phi_m)$ is called a \textbf{$T$-differential field}. Let $F$ be a $T$-differential subfield of $K$, that is, an elementary substructure of $K$ which is closed under each $\phi_i$. Then each $\phi_i$ is a quasi-endomorphism of the relativization $(K,\dcl_F)$.

\begin{corollary}\label{cor:Tfields}
 Let $\av$ be a tuple from $K$. Then there is a polynomial $Q^\Phi_{\av} \in \Q[\Y]$ such that
 \[
\rk_T(\Phi^{\preceq(\sv)}(\av)|F)=Q^\Phi_{\av}(\sv)
\]
whenever $\min(\sv)$ is sufficiently large. Moreover:
\begin{enumerate}
\item The dominant terms of $Q^\Phi_{\av}$ only depend on the $T$-differential field extension $F\langle\Theta(\av)\rangle$.
\item The coefficient of $\Y^\dv$ in $Q^\Phi_{\av}$ times $d_1!\cdots d_k!$ is equal to the maximal size of a subset $B\subseteq F\langle\Theta(\av)\rangle$ such that the tuple $(\theta b)_{\theta \in \Theta,b \in B}$ is $\dcl$-independent over $F$.
\end{enumerate}
\end{corollary}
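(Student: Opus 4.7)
The plan is to reduce Corollary~\ref{cor:Tfields} directly to the general machinery of Corollary~\ref{cor:kolchin}, Proposition~\ref{prop:invariant}, and Corollary~\ref{cor:kolchincoefficient}, specialized to the finitary matroid $(K,\dcl_F)$. The preceding Fact tells us that every $T$-derivation $\phi_i$ is a quasi-endomorphism of $(K,\dcl)$. Since $F$ is a $T$-differential elementary substructure of $K$, it is closed under each $\phi_i$, and the discussion of localization in Section~\ref{sec:prelim} then gives that each $\phi_i$ is also a quasi-endomorphism of the localized matroid $(K,\dcl_F)$. In particular, for whichever partition $(\Phi_1,\ldots,\Phi_k)$ of $\Phi$ we work with, each part $\Phi_i$ is a quasi-triangular system for $(K,\dcl_F)$, since any tuple of commuting quasi-endomorphisms automatically forms a quasi-triangular system.

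Having verified the hypotheses, I would apply Corollary~\ref{cor:kolchin} to the matroid $(K,\dcl_F)$ with finite set $A = \{a_1,\ldots,a_n\}$ (the components of $\av$) and $B = \emptyset$. Because the rank function of $(K,\dcl_F)$ is exactly $\rk_T(\cdot\,|\,F)$, the conclusion provides the required polynomial $Q^\Phi_{\av}\in\Q[\Y]$ of degree at most $d_i$ in each $Y_i$ with
\[
\rk_T(\Phi^{\preceq(\sv)}(\av)\,|\,F)=Q^\Phi_{\av}(\sv)
\]
for $\min(\sv)$ sufficiently large.

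For the moreover clauses, I would argue exactly as in Corollaries~\ref{cor:ddfields} and~\ref{cor:eddfields}. Part (1) follows from Proposition~\ref{prop:invariant}: in the localized matroid $(K,\dcl_F)$ we have $\Theta(\emptyset)=\emptyset$, and $\dcl_F(\Theta(\av))$ is precisely $F\langle\Theta(\av)\rangle$; hence if $\bv$ is another tuple with $F\langle\Theta(\av)\rangle = F\langle\Theta(\bv)\rangle$, then $Q^\Phi_{\av}$ and $Q^\Phi_{\bv}$ share their dominant terms. Part (2) is immediate from Corollary~\ref{cor:kolchincoefficient}, which identifies the coefficient of $\Y^{\dv}$ times $d_1!\cdots d_k!$ with the maximal size of a subset of $\dcl_F(\Theta(\av)) = F\langle\Theta(\av)\rangle$ whose $\Theta$-orbit is $\dcl$-independent over $F$.

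There is no substantive obstacle in this argument: the work has all been done upstream, with the only content specific to this setting being the Fact that $T$-derivations are quasi-endomorphisms of $(K,\dcl)$ (itself proved much like Lemma~\ref{lem:derisquasi}, using the chain rule identity~(\ref{eq:Tder}) in place of the Leibniz rule). Everything else is a matter of quoting the general results and reading off the rank function of the localized matroid as $\rk_T(\cdot\,|\,F)$.
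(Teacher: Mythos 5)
Your argument is correct and follows exactly the route the paper takes: note that $T$-derivations are quasi-endomorphisms of $(K,\dcl)$ (the preceding Fact), pass to the localization $(K,\dcl_F)$, and apply Corollary~\ref{cor:kolchin}, Proposition~\ref{prop:invariant}, and Corollary~\ref{cor:kolchincoefficient} with $B=\emptyset$, just as in Corollaries~\ref{cor:ddfields} and~\ref{cor:eddfields}. The identification $\dcl_F(\Theta(\av))=F\langle\Theta(\av)\rangle$ and the reading of the localized rank as $\rk_T(\cdot\,|\,F)$ are exactly the bookkeeping the paper intends.
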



\end{document}